\documentclass{article}
\usepackage{amsthm}
\usepackage{amssymb}
\usepackage{amsfonts}
\usepackage{amsmath}
\begin{document}

\newtheorem{theorem}{Theorem}[section]
\newtheorem{definition}{Definition}[section]
\newtheorem{corollary}[theorem]{Corollary}
\newtheorem{lemma}[theorem]{Lemma}
\newtheorem{proposition}[theorem]{Proposition}
\newtheorem{step}[theorem]{Step}
\newtheorem{example}[theorem]{Example}
\newtheorem{remark}[theorem]{Remark}

\font\sixbb=msbm6
\font\eightbb=msbm8
\font\twelvebb=msbm10 scaled 1095
\newfam\bbfam
\textfont\bbfam=\twelvebb \scriptfont\bbfam=\eightbb
                           \scriptscriptfont\bbfam=\sixbb

\newcommand{\tr}{{\rm tr \,}}
\newcommand{\linspan}{{\rm span\,}}
\newcommand{\rank}{{\rm rank\,}}
\newcommand{\diag}{{\rm Diag\,}}
\newcommand{\Image}{{\rm Im\,}}
\newcommand{\Ker}{{\rm Ker\,}}

\def\bb{\fam\bbfam\twelvebb}
\newcommand{\enp}{\begin{flushright} $\Box$ \end{flushright}}

\def\cB{{\mathcal{B}}}
\def\cD{{\mathcal{D}}}
\def\cU{{\mathcal{U}}}
\def\cV{{\mathcal{V}}}

\def\C{{\mathbb{C}}}
\def\D{{\mathbb{D}}}
\def\N{{\mathbb{N}}}
\def\Q{{\mathbb{Q}}}
\def\R{{\mathbb{R}}}
\def\Z{{\mathbb{Z}}}

\title{Loewner's theorem for maps on operator domains\thanks{The first author was supported by Leading Graduate Course for Frontiers of Mathematical Sciences and Physics (FMSP), JSPS KAKENHI 19J14689 and JSPS Overseas Challenge Program, Japan. The second author was supported by grants N1-0061, J1-8133, and P1-0288 from ARRS, Slovenia.}}
\author{Michiya Mori\footnote{Graduate School of Mathematical Sciences, The University of Tokyo, 3-8-1 Komaba Meguro-ku Tokyo, 153-8914, Japan, mmori@ms.u-tokyo.ac.jp} \ \, and \ \, 
Peter \v Semrl\footnote{Faculty of Mathematics and Physics, University of Ljubljana,
        Jadranska 19, SI-1000 Ljubljana, Slovenia; Institute of Mathematics, Physics, and Mechanics, Jadranska 19, SI-1000 Ljubljana, Slovenia, peter.semrl@fmf.uni-lj.si}
        }

\date{}
\maketitle

\begin{abstract}
The classical Loewner's theorem states that operator monotone functions on real intervals are described by holomorphic functions on the upper half-plane. 
We characterize local order isomorphisms on operator domains by biholomorphic  automorphisms of the generalized upper half-plane, which is the collection of all operators with positive invertible imaginary part. 
We describe such maps in an explicit manner, and examine properties of maximal local order isomorphisms. 
Moreover, in the finite-dimensional case, we prove that every order embedding of a matrix domain is a homeomorphic order isomorphism onto another matrix domain. 
\end{abstract}
\maketitle

\bigskip
\noindent AMS classification: 46G20, 47B15, 47B49.

\bigskip
\noindent
Keywords: Loewner's theorem, operator domain, local order isomorphism, order embedding, biholomorphic map, generalized upper half-plane.


\section{Introduction}
A real function $f$ defined on an open interval $(a, b)$ is said to be matrix monotone of order $n$ if for every pair of $n\times n$ hermitian matrices $X, Y$ whose eigenvalues belong to $(a, b)$ we have $X\le Y \Rightarrow f(X)\le f(Y)$. 
If $f$ is a matrix monotone function of order $n$ for all positive integers $n$, we say that $f$ is operator monotone. 
The study of operator monotone functions was initiated by Loewner \cite{Low}. 
His famous theorem states that a function $f : (a, b) \to {\R}$ is operator monotone if and only if 
$f$ has an analytic continuation to the upper half-plane $\Pi$ which maps $\Pi$ into itself, and this is true if and only if
$f$ has an analytic extension  to $(\mathbb{C} \setminus \mathbb{R}) \cup (a,b)$ which maps
the upper half-plane $\Pi$ into itself, and the extension to the lower half-plane is obtained by the reflection across the real line.
The theory of operator monotone functions has found many important applications, see for example \cite{KuA, WiN}. 
Over the years enormous interest has been paid to this deep theorem. 
Recently, a multivariable version has been obtained \cite{AMY}. 
See Donoghue's book \cite{Don} for further information about the classical Loewner's theorem.
A number of alternative proofs can be found in Simon's book \cite{Sim}. 
Furthermore, a review of this book \cite{McC} can serve as a quick gentle introduction to this topic. 

We are going to consider a much more general setting. 
Let $H$ be a complex Hilbert space. 
To avoid trivialities, we assume throughout the paper that $H$ has dimension at least $2$. 
We denote by $B(H)$ the algebra of all bounded linear operators on $H$, and by $S(H)$ the subset of all self-adjoint operators.
Let $U,V$ be subsets of $S(H)$. A map $\phi : U \to V$ preservers order (in one direction) if for every pair $X, Y\in U$ we have $X\le Y \Rightarrow \phi(X)\le \phi(Y)$, and it is an order embedding (or preservers order in both directions) if for every pair $X, Y\in U$ we have $X\le Y \iff \phi(X)\le \phi(Y)$. It is easy to see that an order embedding is injective.
If $\phi$ is a bijective order embedding, it is called an order isomorphism.  

A nonempty subset $U \subset S(H)$ will be called an operator domain if it is open and connected. Here and throughout the paper, the topology on $B(H)$ is induced by the operator norm.
Let $U$ be an operator domain. A map $\phi : U\to S(H)$ is defined to be a local order isomorphism if for every $X\in U$ there are operator domains $V, W$ with  $X\in V\subset U, W\subset S(H)$, such that $\phi(V)=W$ and $\phi : V\to W$ is an order isomorphism.
The generalized upper half-plane $\Pi(H)$ is the collection of all operators of the form $X+iY$, where $X\in S(H)$ and $Y$ is a positive invertible operator in $S(H)$. The generalized lower half-plane $\Pi(H)^\ast = \{ X^\ast \, : \, X \in \Pi (H) \}$ is the set of all bounded operators on $H$ whose imaginary part is negative and invertible.

We are now ready to formulate our main theorem.
\begin{theorem}\label{punodosta}
Let $U  \subset S(H)$ be an operator domain. The following conditions are equivalent for a map $\phi : U \to S(H)$.
\begin{itemize}
\item The map $\phi$ is a local order isomorphism.
\item The map $\phi$ has a unique continuous extension to $U \cup \Pi (H)$ that maps $\Pi (H)$ biholomorphically onto itself.
\item There exist open connected sets $\cU, \cV\subset B(H)$ such that $U \cup \Pi(H)  \cup \Pi(H)^\ast \subset \cU$ and $\phi$ has an extension to a biholomorphic map from $\cU$ onto $\cV$ that maps  $\Pi(H)$ onto itself and $\Pi(H)^\ast$ onto itself.
\end{itemize}
\end{theorem}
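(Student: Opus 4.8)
The plan is to establish the cycle of implications $(3)\Rightarrow(2)\Rightarrow(1)\Rightarrow(3)$, reserving the last link for the genuinely hard work. Two of the three steps are complex-analytic and should be comparatively routine once the biholomorphic automorphism group of $\Pi(H)$ is understood; the passage from the purely order-theoretic hypothesis to a holomorphic object is the crux of the theorem and generalizes the deep part of Loewner's original result.

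For $(3)\Rightarrow(2)$ I would simply restrict the biholomorphic map on $\cU$ to $U\cup\Pi(H)$: it is continuous there and maps $\Pi(H)$ biholomorphically onto itself. The only point needing care is \emph{uniqueness} of the extension, which I would obtain from a boundary-uniqueness principle for holomorphic $B(H)$-valued maps. Two continuous extensions must agree on the operator domain $U$, a piece of the real boundary $S(H)$ of $\Pi(H)$, and both are holomorphic on the connected set $\Pi(H)$; analytic continuation from these common boundary values forces them to coincide.

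For $(2)\Rightarrow(1)$ the key input is an explicit description of the biholomorphic automorphisms of $\Pi(H)$. Via the Cayley transform $\Pi(H)$ is biholomorphic to the open unit ball of $B(H)$, a bounded symmetric domain whose automorphisms are the generalized M\"obius (fractional linear) transformations $Z\mapsto(AZ+B)(CZ+D)^{-1}$; transporting back, every automorphism of $\Pi(H)$ is generated by the translations $Z\mapsto Z+A$ with $A\in S(H)$, the congruences $Z\mapsto T^{\ast}ZT$ with $T$ invertible, and the inversion $Z\mapsto -Z^{-1}$. Each generator, restricted to the self-adjoint operators where it is defined, preserves order in both directions locally: this is immediate for translations and congruences, and for the inversion it follows from $0<X\le Y\iff Y^{-1}\le X^{-1}$. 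Hence the continuous boundary restriction $\phi$ of any such automorphism is a local order isomorphism.

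The heart of the theorem is $(1)\Rightarrow(3)$, which I would argue in three stages. First, \emph{automatic continuity}: a local order isomorphism is locally a homeomorphism onto an operator domain, via sandwiching arguments for monotone maps together with the openness of $U$. Second, \emph{local rigidity}: for each $X\in U$ the local order isomorphism $\phi\colon V\to W$ between operator domains must be the restriction of a generalized M\"obius transformation, i.e.\ of a composition of the three generators above. This is where the full strength of the two-sided order hypothesis is consumed, and I expect it to be the main obstacle; the natural tools are the geometry of operator intervals and order segments together with a Chow-type, fundamental-theorem-of-chain-geometry rigidity that forces an order-preserving map to be projective. Third, \emph{globalization}: local rigidity furnishes, near each point, a M\"obius automorphism $\Phi_X$ of $\Pi(H)$ restricting to $\phi$; since any two such maps agreeing on an open subset of $S(H)$ agree everywhere, and $U$ is connected, all the $\Phi_X$ coincide with a single automorphism $\Phi$ of $\Pi(H)$ that extends $\phi$ and maps $\Pi(H)$ biholomorphically onto itself. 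Finally I would reflect across the real boundary, defining the extension on $\Pi(H)^{\ast}$ by $Z\mapsto\phi(Z^{\ast})^{\ast}$ (which is holomorphic, being anti-holomorphic in two factors) and fusing the two halves across $U$ by an operator edge-of-the-wedge / Schwarz reflection argument, thereby producing the open connected sets $\cU,\cV$ and the biholomorphic map required in $(3)$.
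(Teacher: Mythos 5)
Your overall architecture---the cycle $(3)\Rightarrow(2)\Rightarrow(1)\Rightarrow(3)$ with the hard work saved for the last link---is sound, and your two complex-analytic links can be made rigorous roughly as you indicate (the paper itself proves $(3)\Rightarrow(1)$ more cleanly, with no classification at all, by a derivative-positivity argument: holomorphy plus $\phi(\Pi(H))\subset\Pi(H)$ forces $(D\phi(X))(A)\ge 0$ for $A\ge 0$, hence monotonicity along segments, Theorem~\ref{1}). The genuine gap is your ``local rigidity'' stage in $(1)\Rightarrow(3)$: it is named, not proved. You appeal to a ``Chow-type, fundamental-theorem-of-chain-geometry rigidity that forces an order-preserving map to be projective,'' but no off-the-shelf theorem of that kind applies to order isomorphisms of operator domains, and this step is precisely where all the difficulty of the theorem sits. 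The paper fills it in two concrete moves: Lemma~\ref{[]} shows, via Douglas' lemma, that a local order isomorphism restricts near every point to an order isomorphism of an operator interval $[A,B]$ onto $[\phi(A),\phi(B)]$; after affine normalization this is an order automorphism of $[0,I]$, whose explicit form is the second author's Theorem~\ref{fpfq} (from \cite{Se0,Se6}, a deep result with an involved proof which this paper deliberately does not reproduce); Lemma~\ref{ABCD} then checks, factor by factor in that formula, that such a map extends to a biholomorphic automorphism of $\Pi(H)$. Your globalization stage is then exactly the paper's Proposition~\ref{pikanogavicka}, using the identity theorem (Proposition~\ref{identity}). Without Theorem~\ref{fpfq} or a proved substitute, stage two is an assertion, and $(1)\Rightarrow(3)$ collapses.

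Three smaller defects. First, your generation claim for the automorphism group of $\Pi(H)$ is false as stated: translations, congruences and the inversion do not generate it; the transpose $Z\mapsto Z^t$ is a further automorphism not expressible through them (this is why Proposition~\ref{harris} has two cases). It is harmless here, since the transpose preserves order in both directions, but it must be added. Second, in $(2)\Rightarrow(1)$ and again at the end you silently evaluate a M\"obius formula at boundary points $X\in U$; one must prove the formula is defined there, i.e.\ that the relevant operator ($X-B$, or $XA+I$) is invertible for $X\in U$. The paper does this with a blow-up argument, Lemma~\ref{diverge}: if $X_n\to X\notin W_A$ then $\|\Theta_A(X_n)\|\to\infty$, contradicting continuity of the extension on $U\cup\Pi(H)$. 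Third, your Schwarz-reflection/edge-of-the-wedge construction of $\cU$ is an unnecessary and risky detour (there is no standard operator-valued edge-of-the-wedge theorem in this infinite-dimensional setting to invoke): once $\phi$ is known to be M\"obius, the rational formula $\Theta_A(X)=(XA+I)^{-1}X$ is already holomorphic on the open set $W_A\supset U\cup\Pi(H)\cup\Pi(H)^\ast$, and Lemma~\ref{A-A} shows $\Theta_A:W_A\to W_{-A}$ is biholomorphic with inverse $\Theta_{-A}$; condition (3) then follows by passing to connected components, with no reflection argument at all.
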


Biholomorphy in our statement replaces holomorphy in the classical Loewner theory, and local order isomorphisms appear in the place of operator monotone functions. 
This is inevitable.
It is not difficult to construct order preserving maps on operator domains that do not have holomorphic extensions to the generalized upper half-plane even under the additional assumptions of bijectivity and continuity. 
Local operator monotonicity appears already in the scalar case, see \cite[Theorem 6.2]{Sim}. 

We have a much better result in the matrix case.
We denote by $M_n$ the set of all $n\times n$ complex matrices and by $S_n$ the set of all $n\times n$ hermitian matrices. 
The corresponding generalized upper half-plane $\Pi_n$ is the collection of all $n\times n$ complex matrices whose imaginary part is a positive invertible matrix. To avoid unnecessary repetition we present a version with only the generalized upper half-plane involved.
\begin{theorem}\label{coffee}
Let $n\ge 2$ and $U\subset S_n$ be a matrix domain.
A map $\phi : U\to S_n$ is an order embedding if and only if $\phi$ has a unique continuous extension to $U \cup \Pi_n$ that maps $\Pi_n$ biholomorphically onto itself.
\end{theorem}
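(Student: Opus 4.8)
The plan is to reduce Theorem~\ref{coffee} to Theorem~\ref{punodosta}. The second condition in Theorem~\ref{coffee} — existence of a unique continuous extension to $U \cup \Pi_n$ mapping $\Pi_n$ biholomorphically onto itself — is \emph{exactly} the second bullet of Theorem~\ref{punodosta} specialized to $H = \C^n$. By Theorem~\ref{punodosta} that condition is equivalent to $\phi$ being a local order isomorphism. Hence everything reduces to the purely order-theoretic assertion that, for a matrix domain $U \subseteq S_n$ with $n \ge 2$, a map $\phi : U \to S_n$ is an order embedding if and only if it is a local order isomorphism. I would prove the two implications separately.

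For the implication \emph{local order isomorphism} $\Rightarrow$ \emph{order embedding}, I would argue as follows. By the third bullet of Theorem~\ref{punodosta}, $\phi$ is the restriction of a biholomorphic map; in particular it is continuous and injective on $U$, so by invariance of domain $\phi$ is a homeomorphism of $U$ onto the matrix domain $\phi(U)$. Since the continuous extension carries $\Pi_n$ onto $\Pi_n$, the map $\phi$ is a matrix-valued Loewner (Pick) function, and the mechanism behind Loewner's theorem — holomorphic self-maps of the generalized upper half-plane have operator monotone boundary values — yields $X \le Y \Rightarrow \phi(X) \le \phi(Y)$. Applying the same to $\phi^{-1}$, which by Theorem~\ref{punodosta} is again a biholomorphic self-map of $\Pi_n$ and whose boundary values on $\phi(U)$ are the inverse homeomorphism, gives the converse implication $\phi(X) \le \phi(Y) \Rightarrow X \le Y$. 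Thus $\phi$ is an order embedding.

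The substantial direction is \emph{order embedding} $\Rightarrow$ \emph{local order isomorphism}, and its crux is automatic continuity. Here I would exploit the elementary but decisive identity $[X - \epsilon I, X + \epsilon I] = \{Z : \|Z - X\| \le \epsilon\}$, valid because a self-adjoint $C$ satisfies $-\epsilon I \le C \le \epsilon I$ iff $\|C\| \le \epsilon$; thus order intervals of this special shape are exactly the closed norm balls. Consequently, if $X_k \to X$ in $U$ and $\epsilon$ is small enough that $X \pm \epsilon I \in U$, then eventually $X - \epsilon I \le X_k \le X + \epsilon I$, so $\phi(X_k)$ lies in the compact order interval $[\phi(X - \epsilon I), \phi(X + \epsilon I)]$. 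Letting $\epsilon \downarrow 0$, the values $\phi(X + \epsilon I)$ decrease and $\phi(X - \epsilon I)$ increase to limits $A \ge \phi(X)$ and $B \le \phi(X)$, and continuity at $X$ amounts to $A = B = \phi(X)$, i.e. the absence of an order-jump along the direction $I$. The two-sided embedding property gives the key intermediate fact: if $W \in U$ satisfies $\phi(X) \le \phi(W) \le A \le \phi(X + tI)$ for all $t > 0$, then $X \le W$ and $W \le X + tI$ for every $t > 0$, whence $W = X$; so $\phi(U)$ meets $[\phi(X), A]$ only at $\phi(X)$. Turning this into the equality $A = \phi(X)$ is the delicate point, where the rigidity of the positive semidefinite cone — in particular the non-lattice (anti-lattice) structure of $S_n$ for $n \ge 2$ — is used to exclude the gap. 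Once $\phi$ is continuous and injective, invariance of domain makes it a homeomorphism of $U$ onto the open connected, hence matrix, domain $\phi(U)$; being a bijective order embedding onto $\phi(U)$ it is an order isomorphism, and therefore a local order isomorphism, which is precisely what Theorem~\ref{punodosta} requires.

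I expect automatic continuity to be the main obstacle. The sandwiching reduces it to excluding order-jumps, but the exclusion genuinely requires the finite-dimensional, non-lattice geometry of $S_n$ for $n \ge 2$; for $n = 1$ monotone functions may jump, which is exactly why the hypothesis $n \ge 2$ is essential and where the argument must work hardest. By contrast, once continuity is in hand, invariance of domain and the reduction to Theorem~\ref{punodosta} are comparatively routine, and the reverse implication is a direct application of the Loewner-type correspondence already underlying Theorem~\ref{punodosta}.
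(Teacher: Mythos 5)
Your reduction of Theorem \ref{coffee} to the equivalence ``order embedding $\iff$ local order isomorphism'' followed by an appeal to Theorem \ref{punodosta} is exactly the paper's route, but both implications, as you argue them, have genuine gaps. In the direction \emph{local order isomorphism} $\Rightarrow$ \emph{order embedding}, you invoke the Loewner mechanism to get $X \le Y \Rightarrow \phi(X) \le \phi(Y)$. But Theorem \ref{1} yields monotonicity only when the whole segment from $X$ to $Y$ lies in the domain, and a matrix domain need not be convex. More decisively: your argument makes no use of finite-dimensionality, yet this implication is \emph{false} in infinite dimensions --- by Theorem \ref{compact}, if both $A_+$ and $A_-$ are noncompact, then $\Phi_A : U_A \to U_{-A}$ is a (maximal) local order isomorphism that is not order preserving. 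So any correct proof must inject a finite-dimensional ingredient. The paper does this by extending $\phi$ to the maximal local order isomorphism $\Phi_A$ (Proposition \ref{rdrit}) and then proving, via the min-max rank argument of Lemmas \ref{minmax} and \ref{finite}, that in $S_n$ one always has $[X,Y] \subset U_A$ whenever $X \le Y$ in $U_A$ (possible because $A_+$ automatically has finite rank); only then do Proposition \ref{restriction} and Theorem \ref{1} apply to give order preservation on all of $U_A$.

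In the converse direction your sketch stops exactly where the work begins. The sandwich argument correctly reduces continuity at $X$ to excluding the order gap between $\phi(X)$ and $A = \lim_{\varepsilon \downarrow 0}\phi(X+\varepsilon I)$, and your ``key intermediate fact'' shows $\phi(U) \cap [\phi(X), A] = \{\phi(X)\}$; but that alone cannot force $A = \phi(X)$: since $\phi(U)$ is not yet known to be open (invariance of domain requires continuity first, so using it here would be circular), nothing so far prevents a genuine jump, and the appeal to the ``anti-lattice structure of $S_n$'' is a heuristic, not an argument. Note also the paper's example $\phi(X)=X$ for $X \ne I$, $\phi(I)=2I$ on $E_n$: the two-sided embedding property alone does not exclude jumps, so whatever argument fills this hole must exploit openness in an essential and nontrivial way. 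The paper's actual proof of automatic continuity (Theorem \ref{autcon2}) is the technical heart of the whole section: Lemma \ref{muci} (monotonicity of $t \mapsto \tr \phi(tI)$ gives continuity at $cI$ off a countable set of $c$), Lemma \ref{ankar} and Corollary \ref{kadkrene} (rank and dimension bounds for embeddings of $E_n$), and above all Theorem \ref{autcon1}, which shows that an order embedding of $E_n$ continuous at $0$ and $I$ is an order isomorphism onto $[\phi(0),\phi(I)]$ --- its proof runs through preservation of projections, Bloch-sphere geometry (Lemmas \ref{coron} and \ref{tvvadba}), invariance of domain, and Wigner's theorem. None of this machinery is present or replaced in your proposal, so the main content of the theorem is missing.
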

Compare our results with \cite{PTD}, in which Pascoe and Tully-Doyle studied a ``free'' analogue of the classical Loewner's theorem, and discovered the relation between order preserving maps and holomorphic maps of the generalized upper half-plane under the assumption of freeness.

In the second section we will prove that the third condition in the main theorem implies the first one. We will start with maps that are not biholomorphic but only holomorphic on the generalized upper half-plane and verify that this assumption is enough to yield the monotonicity on line segments.
The next section will be devoted to the proof that every local order isomorphism has a unique continuous extension that restricts to a biholomorphic automorphism of the generalized upper half-plane. We will continue by giving explicit formulae 
for biholomorphic automorphisms of the generalized upper half-plane and local order isomorphisms. As a byproduct we will complete the proof of the main theorem.
In order to understand local order isomorphisms, we only need to study maximal ones, which we classify by a natural equivalence relation. 
We also give a necessary and sufficient condition for a maximal local order isomorphism to be an order isomorphism.
The crucial step in the proof of Theorem \ref{coffee} is an automatic continuity result for order embeddings of matrix domains. 
An explicit matrix formula for maximal order embeddings will be given. 
In the last section, we will present examples illustrating the optimality of our results, and show that they unify and significantly extend all known results on order isomorphisms of operator intervals, see \cite{Se6} and the references therein.

\section{From biholomorphic maps to local order isomorphisms}
We recall two definitions.
Let $U\subset \cB$ be an open subset of a real Banach space. 
A map $\phi: U\to \cB$ is Gateaux differentiable if for any $X\in U$ there exists a map $D\phi(X): \cB\to \cB$ such that 
$$
\left\|\frac{\phi(X+\varepsilon Y) - \phi(X)}{\varepsilon} - (D\phi(X))(Y)\right\| \to 0
$$
as $\R\ni \varepsilon \to 0$ for every $Y\in \cB$.

Let $\cU\subset \cB$ be an open subset of a complex Banach space. 
A map $\phi: U\to \cB$ is holomorphic or Fr\'echet differentiable if for any $X\in U$ there exists a bounded complex-linear map $D\phi(X): \cB\to \cB$ such that 
$$
\frac{\|\phi(X+Y) - \phi(X) - (D\phi(X))(Y)\|}{\| Y\|} \to 0
$$
as $\| Y \|\to 0$.
Let $\cU, \cV\subset \cB$ be open subsets of a complex Banach space.
A map $\phi: \cU\to \cV$ is said to be biholomorphic if $\phi$ is bijective and both $\phi$ and $\phi^{-1}$ are holomorphic.  

For two operators $A, B\in S(H)$, we write $A<B$ if $A\le B$ and $B-A$ is invertible in $B(H)$. 

\begin{lemma}\label{gateaux}
Let $U\subset S(H)$ be an operator domain. 
Suppose that $\phi: U\to S(H)$ is a Gateaux differentiable map with 
$(D\phi(X)) (A)\ge 0$ for any $X\in U$ and $A\in S(H)$ with $A\ge 0$. 
If $X, Y\in U$ satisfy $X\le Y$ and $(1-c)X+cY\in U$ for any real number $0\le c\le 1$, then $\phi(X)\le \phi(Y)$.
\end{lemma}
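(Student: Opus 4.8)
The plan is to reduce the operator inequality $\phi(X)\le\phi(Y)$ to a family of scalar monotonicity statements obtained by testing against vectors, so that only one-variable calculus is needed. First I would parametrize the segment by $\gamma(c)=(1-c)X+cY=X+c(Y-X)$ for $c\in[0,1]$; by hypothesis $\gamma(c)\in U$ throughout, and since $U$ is open the Gateaux derivative $(D\phi(\gamma(c)))(Y-X)$ exists at every $c\in[0,1]$ (one-sidedly at the endpoints). Because $X\le Y$ we have $A:=Y-X\ge 0$, so the standing assumption on $\phi$ gives $(D\phi(\gamma(c)))(A)\ge 0$ as an element of $S(H)$, for each $c$.

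Next, fix a vector $\xi\in H$ and set $h(c)=\langle\phi(\gamma(c))\xi,\xi\rangle$, a real-valued function on $[0,1]$. Since $\gamma(c)+\varepsilon A=\gamma(c+\varepsilon)$, the difference quotient of $h$ is exactly the image, under the bounded linear functional $T\mapsto\langle T\xi,\xi\rangle$, of the difference quotient appearing in the definition of the Gateaux derivative. As that functional is norm-continuous, I can pass to the limit and conclude that $h$ is differentiable on $[0,1]$ with $h'(c)=\langle(D\phi(\gamma(c)))(A)\xi,\xi\rangle$. By the previous paragraph, $h'(c)\ge 0$ for all $c$.

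Finally, since $h$ is differentiable it is continuous on $[0,1]$, so the mean value theorem applies and $h'(c)\ge 0$ forces $h$ to be nondecreasing; in particular $h(0)\le h(1)$, that is, $\langle\phi(X)\xi,\xi\rangle\le\langle\phi(Y)\xi,\xi\rangle$. As $\xi\in H$ was arbitrary, this yields $\phi(X)\le\phi(Y)$, as desired.

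I expect the only real subtlety to lie in the fact that we are given mere Gateaux (not Fr\'echet or $C^1$) differentiability, which rules out directly integrating the derivative along the segment in the Banach space $S(H)$. The scalar reduction sidesteps this: testing against each $\xi$ converts the problem into ordinary one-dimensional differentiability, where the elementary fact that a nonnegative derivative forces a nondecreasing function is available without any continuity of the derivative in $c$. Care is needed only to check that the bounded functional commutes with the Gateaux limit and that the one-sided derivatives at the endpoints are covered, both of which are immediate.
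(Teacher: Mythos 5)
Your proof is correct, and it takes a genuinely different and more elementary route than the paper's. The paper argues at the operator level: it first perturbs $\phi$ to $\phi_a(X)=\phi(X)+aX$ so that directional derivatives in strictly positive directions become strictly positive, proves the strict case $X<Y$ by a supremum/connectedness argument along the segment (using that each point of the segment admits a later point where $\phi_a$ is strictly larger, together with continuity of $c\mapsto\phi_a(X_c)$), then handles the general case $X\le Y$ by replacing $Y$ with $Y+dI$ (a compactness argument keeps the perturbed segments inside $U$) and letting $d\to 0$, and finally lets $a\to 0$. Your scalarization via $h(c)=\langle\phi((1-c)X+cY)\xi,\xi\rangle$ collapses all of this: since the Loewner order on $S(H)$ is exactly the order of the quadratic forms $T\mapsto\langle T\xi,\xi\rangle$, and these functionals are norm-continuous, Gateaux differentiability immediately gives $h'(c)=\langle (D\phi(\gamma(c)))(Y-X)\xi,\xi\rangle\ge 0$, and the one-dimensional mean value theorem finishes the argument with no perturbation, no strict case, and no limiting procedures. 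The endpoint issue you flag is indeed harmless (one-sided derivatives suffice for MVT, and differentiability gives the needed continuity of $h$). What the paper's operator-level argument buys is mainly that it never leaves the ordered-operator framework, but since positivity in $S(H)$ is by definition tested against vectors, nothing is lost by your reduction; your proof is shorter and, if anything, more transparent.
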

\begin{proof}
For a positive real number $a>0$, consider the Gateaux differentiable map $\phi_a : U\to S(H)$ defined by $\phi_a(X)=\phi(X)+aX$. 
Then we have $(D\phi_a(X)) (A)> 0$ for any $X\in U$ and $A\in S(H)$ with $A> 0$. 
It suffices to show the statement for $\phi_a$ instead of $\phi$ for every positive real number $a>0$. 

Let us assume $X<Y$ first. 
For each real number $c\in [0, 1)$, we have $(D\phi_a((1-c)X+cY))(Y-X)>0$. 
Since $(D\phi_a((1-c)X+cY))(Y-X)$ is equal to 
$$
\lim_{\varepsilon\to 0}
\frac{\phi_a((1-c)X+cY + \varepsilon (Y-X)) - \phi_a((1-c)X+cY)}{\varepsilon}  ,
$$
there exists a positive real number $ 0 <\varepsilon \le 1-c$ such that 
$\phi_a((1-c)X+cY + \varepsilon (Y-X)) > \phi_a((1-c)X+cY)$. 
In other words, if we define $X_c = (1-c)X + cY$, $c \in [0,1]$, then for every $c \in [0,1)$ there exists $d$ such that $c < d \le 1$ and $\phi_a (X_c) < \phi_a (X_d)$.
 Moreover, the Gateaux differentiability of $\phi_a$ implies that the map $[0, 1]\ni c\mapsto \phi_a(X_c)$ is continuous. 
Thus $\sup\{c\in (0, 1]\, : \,  \phi_a((1-c)X+cY)\ge \phi_a(X)\}=1$, so we obtain $\phi_a(X)\le\phi_a(Y)$. 

Consider the general case $X\le Y$. 
Since the collection $\{(1-c)X+cY\, : \, c\in [0, 1]\}$ is compact, there exists a positive real number $\varepsilon>0$ such that $(1-c)X+c(Y +d I)\in U$ for all $c\in [0, 1], d\in (0, \varepsilon]$. 
The preceding paragraph implies $\phi_a(X)\le \phi_a(Y+dI)$ for any $d\in (0, \varepsilon]$. 
Take the limit $d\to 0$ to obtain $\phi_a(X)\le \phi_a(Y)$. 
\end{proof}

In the last section we will present examples showing that when considering maps on operator domains we need to assume that order is (at least locally) preserved in both directions if we want to get any reasonable result. Then it is natural that in our analogue of the Loewner's theorem the holomorphy is replaced by the biholomorphy. Nevertheless, when dealing with the easier implication of our main theorem the situation is similar to the scalar case. That is, just the holomorphy is enough to deduce the monotonicity on line segments.

\begin{theorem}\label{1}
Let $\cU \subset B(H)$ be an open subset with $\cU\cap S(H)\neq \emptyset$ and $\Pi(H)\subset \cU$. Let $U \subset \cU\cap S(H)$ be an operator domain.  
Suppose that $\phi: \cU\to B(H)$ is a holomorphic map with $\phi(U)\subset S(H)$ and $\phi(\Pi(H))\subset \Pi(H)$. 
If $X, Y\in U$ satisfy $X\le Y$ and $(1-c)X+cY\in U$ for any real number $0\le c\le 1$, then $\phi(X)\le \phi(Y)$.
\end{theorem}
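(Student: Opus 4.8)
The plan is to reduce Theorem \ref{1} to Lemma \ref{gateaux} by showing that the restriction $\phi|_U : U \to S(H)$ is Gateaux differentiable and that its derivative sends positive operators to positive operators. Once this is established, the conclusion $\phi(X) \le \phi(Y)$ follows immediately from Lemma \ref{gateaux}, since the hypotheses imposed on $X$ and $Y$ here, namely $X \le Y$ together with the segment $(1-c)X + cY$ lying in $U$ for all $c \in [0,1]$, are exactly the ones required there.

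First I would verify Gateaux differentiability and self-adjointness of the restricted derivative. Since $\phi$ is holomorphic on $\cU$, it is Fr\'echet differentiable there, so for each $X \in U$ the derivative $D\phi(X) : B(H) \to B(H)$ is a bounded complex-linear map whose restriction to real increments yields the Gateaux derivative. Because $U$ is open in $S(H)$, for $A \in S(H)$ and small real $\varepsilon$ we have $X + \varepsilon A \in U$, hence $\phi(X + \varepsilon A) \in S(H)$; so the difference quotients $\varepsilon^{-1}(\phi(X + \varepsilon A) - \phi(X))$ lie in $S(H)$, and as $S(H)$ is norm-closed their limit $(D\phi(X))(A)$ is self-adjoint. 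Thus $D\phi(X)$ restricts to a real-linear map $S(H) \to S(H)$, and $\phi|_U$ is Gateaux differentiable in the sense of Lemma \ref{gateaux}.

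The heart of the argument is the positivity of this derivative, and this is where the hypothesis $\phi(\Pi(H)) \subset \Pi(H)$ enters. Fix $X \in U$ and a positive invertible $A \in S(H)$. For every $t > 0$ the operator $X + itA$ has imaginary part $tA$, which is positive and invertible, so $X + itA \in \Pi(H) \subset \cU$ and therefore $\phi(X + itA) \in \Pi(H)$ has positive invertible imaginary part. On the other hand, complex-linearity of the Fr\'echet derivative applied to the purely imaginary increment $itA$ gives
$$
\phi(X + itA) = \phi(X) + it\,(D\phi(X))(A) + R(t), \qquad \|R(t)\|/t \to 0 \ \text{as} \ t \to 0^{+}.
$$
Writing $P = \phi(X)$ and $Q = (D\phi(X))(A)$, both self-adjoint by the previous step, the imaginary part $(Z - Z^\ast)/(2i)$ of the right-hand side equals $tQ + (R(t) - R(t)^\ast)/(2i) = tQ + o(t)$. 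Comparing this with the positivity of the imaginary part of $\phi(X + itA)$, dividing by $t$, and letting $t \to 0^{+}$ (using that the cone of positive operators is norm-closed) yields $Q = (D\phi(X))(A) \ge 0$. Finally, for an arbitrary $A \ge 0$ one applies this to $A + \delta I > 0$ and lets $\delta \to 0$, invoking boundedness of $D\phi(X)$, to obtain $(D\phi(X))(A) \ge 0$ for all $A \ge 0$.

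I expect the positivity step to be the main, and indeed the only genuinely non-routine, point: it is where holomorphy is actually used, through the complex-linearity of the Fr\'echet derivative evaluated on the imaginary increment $itA$, and where the geometric hypothesis $\phi(\Pi(H)) \subset \Pi(H)$ is converted into an infinitesimal positivity statement about $D\phi$. The self-adjointness and Gateaux differentiability are routine once one observes that $U$ is open in $S(H)$, and the passage from $A > 0$ to $A \ge 0$ is a standard approximation. With all hypotheses of Lemma \ref{gateaux} thereby verified, that lemma delivers $\phi(X) \le \phi(Y)$.
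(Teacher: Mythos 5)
Your proposal is correct and follows essentially the same route as the paper: both reduce Theorem \ref{1} to Lemma \ref{gateaux} by checking that $D\phi(X)$ maps $S(H)$ into $S(H)$ and sends positive operators to positive operators, the latter via the purely imaginary increment $itA$ and the hypothesis $\phi(\Pi(H))\subset\Pi(H)$. Your version is in fact slightly more careful at the positivity step (handling invertible $A>0$ first and then passing to general $A\ge 0$ by approximation), where the paper asserts $(D\phi(X))(iA)\in\overline{\Pi(H)}$ for all $A\ge 0$ in one line.
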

\begin{proof}
Let $X\in U$ and $A\in S(H)$, $A\ge 0$.
The assumption $\phi(U)\subset S(H)$ implies 
$$
(D\phi(X))(A) = \lim_{t \to 0} {1 \over  t} (\phi (X+ tA) - \phi (X))\in S(H).
$$ 
Since $\phi$ restricts to a holomorphic map of $\Pi(H)$ into itself, we have $(D\phi(X))(iA) = i (D\phi(X))(A)\in \overline{\Pi(H)}$, thus $(D\phi(X))(A)\ge 0$. 
It follows that $\phi$ restricted to $U$ satisfies the assumption of the preceding lemma. 
\end{proof}

We are now ready to prove one of the implications in our main result.

\begin{theorem}
Let $\cU, \cV\subset B(H)$ be open subsets with $\cU\cap S(H)\neq \emptyset$ and $\Pi(H) \cup \Pi (H)^\ast \subset \cU$. Let $U \subset \cU\cap S(H)$ be an operator domain.  
Suppose that $\phi: \cU\to \cV$ is a biholomorphic map that restricts to a biholomorphic automorphism of $\Pi(H)$ and also to a biholomorphic automorphism of $\Pi(H)^\ast$.   
Then $\phi$ restricts to a local order isomorphism of $U$. 
\end{theorem}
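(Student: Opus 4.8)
The plan is to build, for each point $X\in U$, an explicit local chart $V\ni X$ on which $\phi$ is an order isomorphism, applying Theorem \ref{1} twice: once to $\phi$ itself to obtain monotonicity, and once to the inverse $\phi^{-1}$ to obtain the reverse implication. The two preliminary facts I need are that $\phi$ carries self-adjoint operators to self-adjoint operators, and that it does so homeomorphically onto a relatively open set.

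First I would run a reflection argument across the real subspace. Every $X\in S(H)\cap\cU$ is simultaneously a norm-limit of $X+itI\in\Pi(H)$ and of $X-itI\in\Pi(H)^\ast$ as $t\to 0^+$, and all these points lie in $\Pi(H)\cup\Pi(H)^\ast\subset\cU$. Since $\phi$ is continuous with $\phi(\Pi(H))=\Pi(H)$ and $\phi(\Pi(H)^\ast)=\Pi(H)^\ast$, the operator $\phi(X)$ is a limit of operators whose imaginary part is positive semidefinite and also a limit of operators whose imaginary part is negative semidefinite; as the positive cone of $S(H)$ is norm-closed, this forces $\Image\phi(X)=0$, i.e. $\phi(X)\in S(H)$. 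Thus $\phi(S(H)\cap\cU)\subset S(H)\cap\cV$. The inverse $\phi^{-1}$ is biholomorphic and likewise fixes $\Pi(H)$ and $\Pi(H)^\ast$ setwise, so the same argument gives $\phi^{-1}(S(H)\cap\cV)\subset S(H)\cap\cU$; combining the two inclusions shows $\phi$ maps $S(H)\cap\cU$ bijectively, hence homeomorphically, onto $S(H)\cap\cV$. In particular $\phi(U)\subset S(H)$, and $\phi$ sends relatively open connected subsets of $U$ to operator domains.

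Next, fix $X\in U$ and put $Y:=\phi(X)\in S(H)\cap\cV$. I would pick $\rho>0$ with $B(Y,\rho)\subset\cV$ and then, by continuity of $\phi$ at $X$, pick $r>0$ so small that $V:=B(X,r)\cap S(H)\subset U$ and $\phi(V)\subset B(Y,\rho/2)$. Being convex, $V$ is an operator domain, and by the homeomorphism above $W:=\phi(V)$ is an operator domain contained in $S(H)$ with $\phi:V\to W$ a bijection. The forward implication is immediate: if $X',Y'\in V$ with $X'\le Y'$, the segment $(1-c)X'+cY'$ lies in the convex set $V\subset U$, so Theorem \ref{1} applied to $\phi$ gives $\phi(X')\le\phi(Y')$.

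The main obstacle is the reverse implication, showing $\phi:V\to W$ is an order embedding and not merely order preserving. One would like to invoke Theorem \ref{1} for $\phi^{-1}$, but that theorem needs monotonicity along the segment joining two image points, and $W=\phi(V)$ need not be convex, so such a segment may leave $W$. This is exactly why I imposed the size condition above: set $W':=B(Y,\rho)\cap S(H)$, a convex operator domain contained in $S(H)\cap\cV$ on which $\phi^{-1}$ maps into $S(H)$ by the first step. If $X',Y'\in V$ satisfy $\phi(X')\le\phi(Y')$, then $\phi(X'),\phi(Y')\in B(Y,\rho/2)\cap S(H)$, so the whole segment $(1-c)\phi(X')+c\phi(Y')$ stays in $B(Y,\rho)\cap S(H)=W'$; since $\phi^{-1}(\Pi(H))=\Pi(H)$ and $\phi^{-1}(W')\subset S(H)$, Theorem \ref{1} applied to $\phi^{-1}$ on $W'$ yields $X'=\phi^{-1}(\phi(X'))\le\phi^{-1}(\phi(Y'))=Y'$. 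Hence $\phi:V\to W$ is a bijective order embedding, i.e. an order isomorphism, and since $X\in U$ was arbitrary, $\phi$ restricts to a local order isomorphism of $U$.
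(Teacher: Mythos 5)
Your proposal is correct and takes essentially the same route as the paper: both first establish, via the limit/reflection argument across $S(H)$, that $\phi$ restricts to a homeomorphism of $\cU\cap S(H)$ onto $\cV\cap S(H)$, and then shrink to a neighbourhood of each point so that order segments stay inside the relevant domains, applying Theorem \ref{1} once to $\phi$ and once to $\phi^{-1}$. Your explicit balls $B(X,r)\cap S(H)$ and $B(Y,\rho)\cap S(H)$ are simply a concrete realization of the two segment conditions the paper imposes on its neighbourhood $V$.
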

\begin{proof} Because $\phi: \cU\to \cV$ and $\phi^{-1} : \cV\to \cU$ are continuous bijections and $\phi (\Pi (H)) = \Pi (H)$ and $\phi (\Pi (H)^\ast) = \Pi (H)^\ast$ we have $\phi (\cU\cap S(H)) = \cV\cap S(H)$.
It follows that $\phi(U)$ is an operator domain in $S(H)$.
Let $X_0\in U$.
By the continuity of $\phi$, we may take a neighbourhood $X_0\in V\subset U$ with the following two properties: 
\begin{itemize}
\item For any pair $X, Y\in V$ with $X\le Y$ and any $c\in [0, 1]$, $(1-c)X+cY\in U$ holds.
\item For any pair $X, Y\in \phi(V)$ with $X\le Y$ and any $c\in [0, 1]$, $(1-c)X+cY\in \phi(U)$ holds.
\end{itemize}
Hence, by Theorem \ref{1}, $\phi$ restricts to an order isomorphism from $V$ onto $\phi(V)$.
\end{proof}

\section{From local order isomorphisms to biholomorphic maps}\label{hard}
We first prove  a kind of the identity theorem in the operator setting. 
\begin{proposition}\label{identity}
Let ${\cal U} \subset B(H)$ be an open connected set, $\emptyset\neq U\subset{\cal U}\cap S(H)$ an operator domain, and $\xi : {\cal U} \to B(H)$ a holomorphic map with $\xi(X)=0$ for every $X \in U$. 
Then $\xi(X)=0$ for all $X\in {\cal U}$.   
\end{proposition}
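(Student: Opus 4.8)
The plan is to reduce the assertion to a purely local statement near a single point of $U$, and then to spread the conclusion over all of $\mathcal{U}$ by connectedness. First I would fix $X_0\in U$ and choose $r>0$ so small that the open ball $B(X_0,r)=\{Z\in B(H):\|Z-X_0\|<r\}$ is contained in $\mathcal{U}$ and that every self-adjoint $A$ with $\|A\|<r$ satisfies $X_0+A\in U$ (possible since $U$ is open in $S(H)$). Because a Fr\'echet-holomorphic map on an open subset of a complex Banach space is automatically analytic, on this ball I may write the convergent expansion $\xi(X_0+Z)=\sum_{n\ge 0}P_n(Z)$, where each $P_n$ is a continuous homogeneous polynomial of degree $n$ coming from a symmetric $\C$-$n$-linear form $\hat P_n$ on $B(H)$. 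The whole local task is then to prove that every $P_n$ vanishes identically on $B(H)$.

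Next I would feed the hypothesis into this expansion along the self-adjoint slice. Fix a self-adjoint $A$ with $\|A\|<r$ and a real parameter $t$ with $|t|\le 1$; then $X_0+tA\in U$, so $\xi(X_0+tA)=0$, and substituting gives $\sum_{n\ge 0}t^nP_n(A)=0$ for all such $t$. A power series in the real variable $t$ that vanishes on an interval about $0$ has all coefficients zero, so $P_n(A)=0$ for every $n$; by homogeneity this then holds for \emph{every} $A\in S(H)$, not only the small ones. Hence each $P_n$ vanishes on the real form $S(H)$.

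The crux, and the step I expect to be the main obstacle, is upgrading ``$P_n=0$ on $S(H)$'' to ``$P_n=0$ on all of $B(H)$'', i.e.\ exploiting that $S(H)$ is a totally real form of $B(H)$. I would fix $A,B\in S(H)$ and study the map $\lambda\mapsto P_n(A+\lambda B)$. Expanding through the symmetric multilinear form yields $P_n(A+\lambda B)=\sum_{k=0}^{n}\binom{n}{k}\lambda^k\,\hat P_n(\underbrace{B,\ldots,B}_{k},\underbrace{A,\ldots,A}_{n-k})$, a $B(H)$-valued polynomial in $\lambda$ of degree at most $n$ (complex-linearity in each slot is exactly what makes $\lambda$ enter polynomially). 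For real $\lambda$ the argument $A+\lambda B$ is self-adjoint, so this polynomial vanishes on all of $\R$; a polynomial with coefficients in a Banach space that vanishes at infinitely many points is identically zero, so it also vanishes at $\lambda=i$, giving $P_n(A+iB)=0$. Since every $Z\in B(H)$ decomposes as $Z=A+iB$ with $A=(Z+Z^\ast)/2$ and $B=(Z-Z^\ast)/(2i)$ both self-adjoint, I conclude $P_n\equiv 0$ on $B(H)$ for every $n$, and therefore $\xi\equiv 0$ on the ball $B(X_0,r)$.

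Finally I would globalize. Having shown that $\xi$ vanishes on a nonempty open subset of the connected open set $\mathcal{U}$, I invoke the identity theorem for Banach-space-valued holomorphic maps: the set of points at which all Fr\'echet derivatives of $\xi$ vanish is closed by continuity of the derivatives and open by analyticity (near any such point $\xi$ coincides with its identically zero Taylor series), and it is nonempty by the previous paragraph, so connectedness of $\mathcal{U}$ forces it to be all of $\mathcal{U}$, whence $\xi\equiv 0$. The only genuinely non-formal ingredient is the totally real rigidity of the third paragraph; the analyticity of Fr\'echet-holomorphic maps and the connectedness step are standard.
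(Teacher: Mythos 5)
Your proof is correct, but it reaches the conclusion by a different technical route than the paper. Both arguments rest on the same geometric trick --- propagating vanishing from the real form $S(H)$ to complex points along complex lines whose base point and direction are self-adjoint --- but they implement it with different machinery. The paper never expands $\xi$ in a Taylor series: it scalarizes immediately, considering for fixed vectors $x,y$ the one-variable holomorphic function $z \mapsto \langle \xi(X+Y_1+zY_2)x, y\rangle$, which vanishes on $[-1,1]$ because $X+Y_1+tY_2 \in U$ for real $t$, hence vanishes on the whole disk by the classical identity theorem; evaluating at $z=i$ gives the local statement, and the very same scalar-slice argument also drives the paper's globalization along a path from $U$ to an arbitrary point of $\mathcal{U}$. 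You instead invoke the theorem that Fr\'echet-holomorphic maps between complex Banach spaces are analytic, kill each homogeneous Taylor polynomial $P_n$ on $S(H)$ by a real power-series argument, and then upgrade to $P_n \equiv 0$ on $B(H)$ via the polynomial $\lambda \mapsto P_n(A+\lambda B)$; your globalization uses the abstract clopen set where all derivatives vanish. The trade-off: the paper's proof is more self-contained, needing only the definition of Fr\'echet holomorphy plus one-variable complex analysis, whereas yours leans on standard but heavier background (local representation by convergent series of continuous homogeneous polynomials, holomorphy and continuity of all higher derivatives); in return, your version isolates the algebraic mechanism cleanly (complex multilinearity forces the slice to be a polynomial in $\lambda$, and a totally real generating subspace kills it) and yields the statement as an instance of general principles rather than an ad hoc computation. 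One small point to keep in mind if you write this up fully: your steps ``a Banach-space-valued power series vanishing on a real interval has zero coefficients'' and ``a Banach-space-valued polynomial vanishing on $\mathbb{R}$ is identically zero'' should be justified by pairing with bounded linear functionals (Hahn--Banach) to reduce to the scalar case --- so a scalarization step, which the paper makes the centerpiece, is implicitly present in your argument as well.
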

\begin{proof}
Fix $X\in U$. 
Take a positive real number $c>0$ such that $X+Y\in {\cal U}$ for any $Y\in B(H)$ with $\| Y \|\le 2c$, and $X+Y\in U$ for any $Y\in S(H)$ with $\| Y \|\le 2c$. 
Fix an operator $Y\in B(H)$ with $\| Y \|\le c$. 
We prove $\xi(X+Y)=0$. 
Take operators $Y_1, Y_2\in S(H)$ such that $Y=Y_1+iY_2$. 
Then $\| Y_1 \|, \| Y_2\|\le c$. 
For each $x, y\in H$, consider the holomorphic map $z \mapsto \langle \xi(X+Y_1+zY_2)x, y\rangle$ from some open neighbourhood of the closure of $\D:=\{z\in \C\, :\, |z|<1\}$ into $\C$.  
We have $X+Y_1+tY_2\in U$ for any $t\in [-1, 1]$, hence $\langle \xi(X+Y_1+tY_2)x, y\rangle =0$, $t\in [-1, 1]$.
The identity theorem implies $\langle \xi(X+Y_1+zY_2)x, y\rangle = 0$ for any $z\in \overline{\mathbb{D}}$. 
It follows $\xi(X+Y)=\xi(X+Y_1+iY_2)=0$. 

Consider the collection 
$$
{\cal U}_0 := \{X\in {\cal U}\, : \, \xi(Y)=0\text{ holds for every element }Y\text{ in some neighbourhood of }X\}.
$$
Then ${\cal U}_0\supset U$ and ${\cal U}_0$ is open. 
Let $X\in {\cal U}$. 
Take a continuous path $\tau : [0, 1]\to {\cal U}$ such that $\tau(0)\in {\cal U}_0$, $\tau(1)=X$. 
Then the collection $V=\{t\in [0, 1] \, : \, \tau(t)\in {\cal U}_0\}$ is open in $[0, 1]$. 
We prove $V$ is closed. 
Let $t\in [0, 1]$ be in the closure of $V$. 
Since ${\cal U}$ is open and $\tau$ is continuous, we may take numbers $s\in [0, 1]$ and $c>0$ such that $\tau(s)\in {\cal U}_0$, $\tau(s)+Y\in {\cal U}$ for any $Y\in B(H)$ with $\| Y\|\le c$ and $\|\tau(s)-\tau(t)\|<c$. 
Consider the holomorphic function $z\mapsto \langle\xi(\tau(s)+ zY)x, y\rangle$ on $\D$, for $Y\in B(H)$ with $\| Y\|\le c$ and  $x, y\in H$. 
Then the identity theorem implies  that this map is the constant $0$ map, and we have $t\in V$.
We obtain $X = \tau(1) \in {\cal U}_0$, which completes the proof. 
\end{proof}

For $A, B\in S(H)$ with $A\le B$, we define $[A, B] := \{X\in S(H)\, :\, A\le X\le B\}$. 
For $A\in S(H)$, we define  $[A, \infty) := \{X\in S(H)\, :\, A\le X\}$, $(A, \infty) := \{X\in S(H)\, :\, A< X\}$, $(-\infty, A] := \{X\in S(H)\, :\, X\le A\}$, and $(-\infty, A) := \{X\in S(H)\, :\, X< A\}$. 
For $A, B\in S(H)$ with $A< B$, we define $[A, B) := \{X\in S(H)\, :\, A\le X< B\}$, $(A, B] := \{X\in S(H)\, :\, A< X\le B\}$ and $(A, B) := \{X\in S(H)\, :\, A< X< B\}$.
Such collections are called operator intervals.

\begin{lemma}\label{[]}
Let $U\subset S(H)$ be an operator domain and $\phi : U\to S(H)$ be a local order isomorphism. 
Then for any $X\in U$ there exist operators $A, B\in U$ such that $A<X<B$, $[A, B]\subset U$, $\phi(A)<\phi(X)<\phi(B)$, $[\phi(A), \phi(B)]\subset \phi(U)$, $\phi([A, B]) = [\phi(A), \phi(B)]$, and $\phi : [A, B]\to [\phi(A), \phi(B)]$ is an order isomorphism.
\end{lemma}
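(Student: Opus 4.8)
The plan is to extract from the local order isomorphism property a genuine order isomorphism $\phi\colon V\to W$ between operator domains with $X\in V\subseteq U$ and $W=\phi(V)$, and then to manufacture the required operator interval inside $V$ by transporting a small ball around $\phi(X)$ back through $\phi$. Two tools I would isolate first are purely formal. \emph{First}, for self-adjoint $T$ one has $-rI\le T\le rI$ iff $\|T\|\le r$, so the operator interval $[Z-rI,Z+rI]$ is exactly the closed norm-ball of radius $r$ centred at $Z$; in particular such a ball is order-convex. \emph{Second}, if $P,Q\in V$ with $P\le Q$, then order preservation of $\phi$ and of $\phi^{-1}$ gives $\phi([P,Q]\cap V)=[\phi(P),\phi(Q)]\cap W$. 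Consequently, once I have found $A,B$ with $[A,B]\subseteq V$ and $[\phi(A),\phi(B)]\subseteq W$, the restriction $\phi\colon[A,B]\to[\phi(A),\phi(B)]$ is automatically an order isomorphism onto the full interval, and $[A,B]\subseteq U$, $[\phi(A),\phi(B)]\subseteq\phi(U)$ come for free.

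The construction then runs as follows. Fix $\varepsilon>0$ with $\{Y:\|Y-X\|\le\varepsilon\}\subseteq V$. The crux (see below) is the claim that $\phi(X-\varepsilon I)<\phi(X)<\phi(X+\varepsilon I)$. Granting it, $\phi(X)$ lies in the norm-interior of $[\phi(X-\varepsilon I),\phi(X+\varepsilon I)]$; since moreover $\phi(X)\in W$ with $W$ open, there is $\delta>0$ with
$$\{Z:\|Z-\phi(X)\|\le\delta\}=[\phi(X)-\delta I,\phi(X)+\delta I]\subseteq[\phi(X-\varepsilon I),\phi(X+\varepsilon I)]\cap W=\phi\big(\{Y:\|Y-X\|\le\varepsilon\}\big).$$
Now set $A=\phi^{-1}(\phi(X)-\delta I)$ and $B=\phi^{-1}(\phi(X)+\delta I)$. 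By construction $\phi(A)=\phi(X)-\delta I<\phi(X)<\phi(X)+\delta I=\phi(B)$ and $[\phi(A),\phi(B)]=\{Z:\|Z-\phi(X)\|\le\delta\}\subseteq W$, so the image side is settled. The displayed inclusion forces $A,B\in\{Y:\|Y-X\|\le\varepsilon\}$, hence $X-\varepsilon I\le A\le B\le X+\varepsilon I$ and therefore $[A,B]\subseteq\{Y:\|Y-X\|\le\varepsilon\}\subseteq V$, giving the domain containment. Finally, applying the same crux claim to the order isomorphism $\phi^{-1}\colon W\to V$ at the point $\phi(X)$ yields $A=\phi^{-1}(\phi(X)-\delta I)<X<\phi^{-1}(\phi(X)+\delta I)=B$, i.e.\ the strict domain inequalities. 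Combined with the two formal tools this gives every asserted property.

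The whole argument is thus reduced to a single statement: for an order isomorphism $\psi$ of operator domains, a point $Z$, and $r>0$ with $\{Y:\|Y-Z\|\le r\}$ in the domain, one has $\psi(Z-rI)<\psi(Z)<\psi(Z+rI)$; equivalently, $\psi$ maps the centre of a ball to an interior point of the image of that ball. This is the point I expect to be the main obstacle. It is an openness (invariance-of-domain) assertion, and it cannot follow from order preservation by soft means, since at this stage $\phi$ is not yet known to be continuous (establishing continuity is a goal of the section). What must be ruled out is that the positive operators $\phi(X)-\phi(X-\varepsilon I)$ and $\phi(X+\varepsilon I)-\phi(X)$—nonzero by injectivity—fail to be invertible, e.g.\ by carrying $0$ in their spectrum. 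I would attack this through the order geometry rather than analytically: the strict order is the interior of the positive cone, and I would try to detect it intrinsically from the order structure of operator intervals lying in the domain (using, for instance, that suprema of pairs exist precisely for comparable elements, and that a nondegenerate operator interval, unlike a degenerate one, has nonempty interior) and then transport this characterization through $\psi$. This order-rigidity step—morally of Alexandrov–Zeeman type, as is already visible for $2\times 2$ matrices, where $S_2$ is Minkowski space and order isomorphisms are local causal automorphisms—is where the real work lies; the remainder of the lemma is the bookkeeping sketched above.
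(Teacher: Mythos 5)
Your reduction and bookkeeping are sound, and they coincide almost step for step with the second half of the paper's own proof: the paper likewise passes to an order isomorphism onto an open image, uses openness of $U$ and of $\phi(U)$ to pull the small interval $[\phi(X)-\varepsilon' I,\phi(X)+\varepsilon' I]$ back through $\phi^{-1}$, and uses exactly your two formal tools. But the statement you isolate as the ``crux''---that an order isomorphism between operator domains carries $X<Y$ to $\phi(X)<\phi(Y)$---is not a side issue to be deferred: it is the entire mathematical content of the lemma, and your proposal does not prove it, offering only candidate directions of attack. So there is a genuine gap, located exactly where you predicted it would be.

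For comparison, here is how the paper closes it, with a short argument based on Douglas' lemma rather than any heavy rigidity machinery. Suppose $X,Y\in U$, $X<Y$, but $D:=\phi(Y)-\phi(X)\ge 0$ is not invertible (it is nonzero by injectivity). Then the range of $D^{1/2}$ is proper; choose $x$ outside it and let $P$ be the rank-one projection onto $\C x$. Douglas' lemma gives $\phi(X)+cP\nleq\phi(Y)$ for every $c>0$, and moreover $[\phi(X),\phi(X)+cP]\cap[\phi(X),\phi(Y)]=\{\phi(X)\}$, since any $W$ with $0\le W\le cP$ equals $tP$ for some $t\ge 0$, and $tP\le D$ forces $t=0$ by the choice of $x$. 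For small $c$, openness of $\phi(U)$ yields $Z\in U$ with $\phi(Z)=\phi(X)+cP$; then $\phi$ maps $[X,Y]\cap[X,Z]\cap U$ bijectively onto $\{\phi(X)\}$, although that set contains the points $X+t(Z-X)$ for all small $t>0$ (this is where invertibility of $Y-X$ enters), contradicting injectivity. Note that this argument is precisely the ``intrinsic order-geometric detection plus transport'' you were hoping for: it shows that, inside any open set $U$, strict comparability of $X\le Y$ is characterized order-theoretically by the condition that $[X,Y]\cap[X,Z]\cap U\neq\{X\}$ for every $Z\in U$ with $Z\ge X$, $Z\neq X$, and this characterization transports through $\phi$ verbatim. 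By contrast, the two concrete criteria you name do not readily substitute: the anti-lattice property (suprema exist only for comparable pairs) detects the relation $\le$ but does not separate $\le$ from $<$; and the criterion ``$[X,Y]$ has nonempty interior iff $X<Y$,'' while true, is topological rather than order-theoretic, so it cannot be transported through $\phi$, which at this stage of the paper is not known to be continuous or open.
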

\begin{proof}
By the definition of local order isomorphisms, it suffices to prove the statement with the additional assumption $\phi$ is an order isomorphism from $U$ onto the operator domain $\phi(U)$. 

Let $X, Y\in U$ satisfy $X<Y$. 
We prove $\phi(X)<\phi(Y)$. 
Since $\phi$ preserves order, we have $\phi(X)\le \phi(Y)$. 
Assume for a contradiction that $\phi(X)\not< \phi(Y)$. Then
we may find a vector $x$ in $H$ that is not in the range of $(\phi(Y)-\phi(X))^{1/2}$. 
Take the rank-one projection $P$ onto $\C x$. 
By Douglas' lemma, we have $\phi(X)+cP\nleq \phi(Y)$ for all positive real numbers $c>0$. 
Fix $c>0$ small enough. 
We have $\phi(X)+cP\in \phi(U)$, hence there exists an operator $Z\in U$ such that $\phi(Z)=\phi(X)+cP$. 
It follows that $\phi$ restricts to an order isomorphism from $[X, Y]\cap [X, Z]\cap U$ onto $[\phi(X), \phi(X)+cP]\cap [\phi(X), \phi(Y)]\cap \phi(U) =\{\phi(X)\}$, which is absurd. 
Applying the same argument to $\phi^{-1}$, we see that $X<Y$ if and only if $\phi(X)<\phi(Y)$.

Let $X\in U$. 
Since ${U}$ is open, there exists a positive real number $\varepsilon$ such that $[X-\varepsilon I, X+\varepsilon I]\subset {U}$. 
By the preceding paragraph, we have $\phi(X-\varepsilon I)<\phi(X)<\phi(X+\varepsilon I)$. 
Since $\phi({U})$ is open, there exists a positive real number $\varepsilon'$ such that $[\phi(X)-\varepsilon' I, \phi(X)+\varepsilon' I]\subset \phi({U})\cap [\phi(X-\varepsilon I), \phi(X+\varepsilon I)]$. 
It follows that $\phi^{-1}(\phi(X)-\varepsilon' I)<X< \phi^{-1}(\phi(X)+\varepsilon' I)$, $[\phi^{-1}(\phi(X)-\varepsilon' I),  \phi^{-1}(\phi(X)+\varepsilon' I)]\subset {U}$ and $\phi$ restricts to an order isomorphism from $[\phi^{-1}(\phi(X)-\varepsilon' I),  \phi^{-1}(\phi(X)+\varepsilon' I)]$ onto $[\phi(X)-\varepsilon' I, \phi(X)+\varepsilon' I]$.
\end{proof}

Let us recall a result by the second author on order isomorphisms between operator intervals. 
Let $f_p$, $p < 1$, denote a bijective monotone increasing function of the unit interval $[0,1]$ onto itself defined by
$$
f_p (x) = { x \over px + 1-p}, \ \ \ x \in [0,1].
$$
\begin{theorem}[{\cite{Se0, Se6}}]\label{fpfq}
Assume that $\phi : [0,I] \to [0,I]$ is an order automorphism.
Then there exist real numbers $p,q$, $0 <p < 1$, $q < 0$, and a bijective linear or conjugate-linear bounded operator
$T : H \to H$ with $\| T \| \le 1$ such that
\begin{equation}\label{qaui}
\phi (X) = f_q \left( \left( f_p (TT^* ) \right) ^{-1/2} f_p (TXT^*) \left( f_p (TT^* ) \right) ^{-1/2} \right), \ \ \
X \in [0,I].
\end{equation}
\end{theorem}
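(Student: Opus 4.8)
The plan is first to confirm that the maps on the right-hand side of \eqref{qaui} really are order automorphisms, and then to prove they are the only ones. For the consistency check, observe that for $p<1$ the scalar function $f_p$ is a strictly increasing bijection of $[0,1]$ onto itself which, being fractional linear, is operator monotone together with its inverse; hence $X\mapsto f_p(X)$ is an order automorphism of $[0,I]$ fixing $0$ and $I$. Since $\|T\|\le 1$ gives $TXT^*\le TT^*$, the congruence followed by the normalization $\left(f_p(TT^*)\right)^{-1/2}(\,\cdot\,)\left(f_p(TT^*)\right)^{-1/2}$ keeps the expression inside $[0,I]$ and sends $I$ to $I$, so the whole right-hand side is an order automorphism. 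Conversely, any order automorphism $\phi$ preserves the least and greatest elements, so $\phi(0)=0$ and $\phi(I)=I$; this is the first structural constraint.

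The core of the argument is to recover the (conjugate-)linear operator $T$ from the order alone. The key observation is that rank-one positive operators are order-theoretically distinguished: for $A\in[0,I]$ with $A\neq 0$, the principal ideal $[0,A]=\{X : 0\le X\le A\}$ is a chain (totally ordered) precisely when $A$ is a nonzero scalar multiple of a rank-one projection. Consequently $\phi$ maps the rank-one part of $[0,I]$ onto itself, hence induces a bijection of the rays $\C v\subset H$, with the rank-one projection in each ray singled out as the largest rank-one element it contains. One then checks that this induced map respects the incidence structure of the projective space of $H$ --- in particular that orthogonality of rays, suitably detected from the order, is preserved. Invoking the fundamental theorem of projective geometry, and for $\dim H=2$ its orthogonality-based refinement (Wigner's and Uhlhorn's theorems, valid for $\dim H\ge 2$), one obtains a bounded bijective linear or conjugate-linear $T:H\to H$ carrying the ray $\C v$ to the ray $\C Tv$ for every $v$; rescaling arranges $\|T\|\le 1$.

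With $T$ fixed, I would compose $\phi$ with the inverse of the twisted congruence $X\mapsto \left(f_p(TT^*)\right)^{-1/2} f_p(TXT^*)\left(f_p(TT^*)\right)^{-1/2}$, choosing $p$ so as to match the scalar behaviour of $\phi$ along one ray. The resulting order automorphism $\psi$ then fixes every ray, sending each rank-one operator to a rescaled operator on the same line. The final step is a rigidity statement: an order automorphism fixing all rays together with $0$ and $I$ must act by a single scalar functional calculus $\psi(X)=g(X)$, and $g$ together with $g^{-1}$ is operator monotone on $[0,1]$. By the classical fact --- a consequence of Loewner's theorem --- that an operator monotone bijection between intervals with operator monotone inverse is fractional linear, $g$ equals $f_q$ for a suitable parameter, and the endpoint normalization leaves $q<0$ as the only remaining freedom once $p$ is determined. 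Unwinding the compositions yields \eqref{qaui}.

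I expect the principal obstacle to be the passage from ``$\phi$ preserves rank-one operators'' to ``the induced ray map comes from a linear or conjugate-linear $T$'': assembling enough preserved incidence and orthogonality data to apply the fundamental theorem of projective geometry, while simultaneously controlling the boundedness of $T$, is delicate. Equally demanding is the coherence needed to collapse the per-ray scalar maps into a single global fractional linear function. The subtlety here is genuine, since already in the diagonal case over $\C^2$ the scalar map induced on each ray depends on the weight $\|Tv\|$ contributed by the congruence; one therefore cannot naively detach a single functional calculus from the linear part, which is exactly why both parameters $p$ (twisted by $T$) and $q$ (applied cleanly on the outside) are needed.
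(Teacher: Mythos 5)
First, a point of orientation: this theorem is not proved in the paper at all --- it is imported from \cite{Se0, Se6}, and the paper explicitly remarks (Section \ref{apply}) that its proof ``is quite involved''; so your proposal has to be judged against the cited literature, not against anything in this text. Judged that way, it has a genuine error at its central step. You claim that the ray bijection induced by $\phi$ on rank-one elements preserves ``orthogonality of rays, suitably detected from the order,'' and you then invoke Wigner's and Uhlhorn's theorems. But orthogonality is \emph{not} an order invariant of $[0,I]$, and the conclusion of Theorem \ref{fpfq} itself refutes the claim: take $T = \diag (1, 1/2) \oplus I$ and $\phi(X)=\left(f_p(TT^*)\right)^{-1/2}f_p(TXT^*)\left(f_p(TT^*)\right)^{-1/2}$, which is an order automorphism of $[0,I]$ (it is $\phi_3\circ\phi_2\circ\phi_1$ in the decomposition used in Lemma \ref{ABCD}). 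A computation with rank-one elements shows its induced ray map is $\C v\mapsto \C \left(f_p(TT^*)\right)^{-1/2}Tv$, and the operator $\left(f_p(TT^*)\right)^{-1/2}T$ is a non-scalar positive diagonal, so it sends the orthogonal rays through $(1,1,0,\ldots)$ and $(1,-1,0,\ldots)$ to non-orthogonal rays. Hence neither Wigner's theorem (which needs transition probabilities) nor Uhlhorn's (which needs orthogonality, and moreover requires $\dim H\ge 3$, contrary to your parenthetical) is applicable. What \emph{is} order-detectable is collinearity ($w\in\linspan\{u,v\}$ iff every $A\in[0,I]$ dominating nonzero elements of the chains $[0,P_u]$ and $[0,P_v]$ dominates a nonzero element of $[0,P_w]$), so the only available tool is the fundamental theorem of projective geometry; but that is vacuous when $\dim H=2$, and in general it yields only a \emph{semilinear} bijection, so the discontinuous field automorphisms of $\C$ must be excluded by a separate argument you do not supply. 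The two-dimensional case is not a footnote: the Bloch-sphere analysis this paper needs for its own finite-dimensional theorem (Lemmas \ref{coron}, \ref{tvvadba} and Theorem \ref{autcon1}) indicates how much work it takes.

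Second, even granting a bounded linear or conjugate-linear operator implementing the ray map, your reduction does not work as stated. The ray map of the model automorphism in (\ref{qaui}) is induced by $\left(f_p(TT^*)\right)^{-1/2}T$, not by $T$; so the operator handed to you by projective geometry is not the $T$ of the formula, and composing $\phi$ with the inverse of the twisted congruence built from that operator does \emph{not} produce a ray-fixing automorphism. Disentangling $p$ and $T$ from the product $\left(f_p(TT^*)\right)^{-1/2}T$ is a fixed-point problem your outline never addresses. Finally, the concluding rigidity assertion --- that an order automorphism fixing $0$, $I$ and every ray must be a functional calculus $X\mapsto g(X)$ by a single scalar function --- is precisely the hard coherence step: one must prove the per-ray increasing bijections coincide and that the values on rank-one elements determine $\phi$ everywhere. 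You state this as a fact while yourself observing that the per-ray behaviour of the model depends on $\| Tv \|$, which is exactly why it cannot be waved through. (The one ingredient you call classical --- a bijection $g$ of $[0,1]$ with $g$ and $g^{-1}$ operator monotone is fractional linear --- is indeed provable by classical means, via operator concavity/convexity, but it only becomes usable after the rigidity step.) In sum, the skeleton of the strategy is the right kind of preserver argument, but its two load-bearing claims are, respectively, false as stated and unproved.
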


\begin{lemma}\label{ABCD}
Let $A, B, C, D\in S(H)$ be operators with $A<B$, $C<D$. 
Suppose that $\phi : [A, B]\to [C, D]$ is an order isomorphism. 
Then $\phi$ has an extension to a biholomorphic map on an open connected set $\cU\subset B(H)$ with $\Pi(H)\subset \cU$ that maps  ${\Pi} (H)$ onto itself.
\end{lemma}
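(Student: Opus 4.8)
The plan is to reduce to the model interval $[0,I]$ and then build the extension explicitly from the classification in Theorem~\ref{fpfq}. The basic observation is that, for a positive invertible $S\in S(H)$ and a self-adjoint $R$, the affine map $Z\mapsto SZS+R$ is a biholomorphic automorphism of all of $B(H)$ whose imaginary part equals $S(\operatorname{Im}Z)S$; hence it carries $\Pi(H)$ onto itself. Conjugating $\phi$ by $X\mapsto (B-A)^{-1/2}(X-A)(B-A)^{-1/2}$ on the source and by the analogous affine map on the target, I may therefore assume $[A,B]=[C,D]=[0,I]$, so that $\phi$ is an order automorphism of $[0,I]$ and Theorem~\ref{fpfq} gives the formula \eqref{qaui} with a bounded bijective (linear or conjugate-linear) $T$, $\|T\|\le 1$, and parameters $0<p<1$, $q<0$.

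Next I would verify that each factor appearing in \eqref{qaui} extends to a biholomorphic self-map of a neighbourhood of $\Pi(H)$ that fixes $\Pi(H)$ setwise, after which the composition furnishes the extension. Congruences $Z\mapsto \widehat T Z\widehat T^*$ by a bounded bijective \emph{linear} operator $\widehat T$ are biholomorphic automorphisms of $B(H)$ preserving $\Pi(H)$, since their imaginary part is $\widehat T(\operatorname{Im}Z)\widehat T^*$; this applies in particular to the inner congruence by $S_0:=(f_p(TT^*))^{-1/2}$, which is positive invertible because $TT^*\in[0,I]$ is invertible (as $T$ is bijective) and $f_p$ maps $(0,1]$ onto $(0,1]$. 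For the scalar factors I would pass to the operator M\"obius maps $f_p(Y)=Y(pY+(1-p)I)^{-1}$ and compute, for $Y\in\Pi(H)$ and $D:=pY+(1-p)I$, that $D$ is invertible and
\[
\operatorname{Im} f_p(Y)=(1-p)(D^{-1})^*(\operatorname{Im}Y)D^{-1};
\]
since $1-p>0$ this is positive invertible, so $f_p$ maps $\Pi(H)$ biholomorphically onto $\Pi(H)$, and the same computation with $1-q>0$ handles $f_q$. Composing these factors in the order prescribed by \eqref{qaui} yields a holomorphic map defined on an open set containing $\Pi(H)$ and carrying $\Pi(H)$ onto itself; taking the connected component that contains the convex (hence connected) set $\Pi(H)$ produces the desired $\cU$, on which the composition of the inverse factors provides a holomorphic inverse.

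The step I expect to be the genuine obstacle is the conjugate-linear alternative in Theorem~\ref{fpfq}: there $X\mapsto TXT^*$ is anti-holomorphic in $X$, so the literal right-hand side of \eqref{qaui} is not holomorphic away from the self-adjoint operators and cannot itself serve as the extension. The way around this is to trade complex conjugation for the transpose. Fixing a conjugation $J$ on $H$ and setting $A^{t}:=JA^*J$, the transpose $A\mapsto A^{t}$ is a complex-linear isometric involution of $B(H)$, and a short computation giving $\operatorname{Im}(A^{t})=J(\operatorname{Im}A)J$ shows it preserves $\Pi(H)$. Writing the conjugate-linear $T$ as $T=\widehat T J$ with $\widehat T:=TJ$ linear and bijective, one finds $T^*=J\widehat T^*$ and hence $TXT^*=\widehat T(JXJ)\widehat T^*$, which for self-adjoint $X$ equals $\widehat T X^{t}\widehat T^*$. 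Thus on self-adjoint operators---in particular on $[0,I]$---the anti-holomorphic congruence coincides with the holomorphic map $X\mapsto \widehat T X^{t}\widehat T^*$, which I substitute in its place (noting $TT^*=\widehat T\widehat T^*$, so $S_0$ is unchanged). With this substitution the argument of the second paragraph applies verbatim, and because the constructed holomorphic map agrees with the formula \eqref{qaui} on $[0,I]$, it agrees with $\phi$ there; undoing the affine reduction of the first paragraph then gives a biholomorphic extension of $\phi$ on an open connected set $\cU\supset\Pi(H)$ mapping $\Pi(H)$ onto itself, as required.
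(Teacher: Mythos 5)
Your proposal is correct and follows essentially the same route as the paper's proof: reduce to an order automorphism of $[0,I]$ by affine congruences, invoke Theorem~\ref{fpfq}, and extend each of the four factors in (\ref{qaui}) (the two congruences and the M\"obius maps $f_p$, $f_q$) to biholomorphic maps preserving $\Pi(H)$, with the conjugate-linear case handled by the transpose (your $X\mapsto \widehat T X^t \widehat T^*$ is literally the paper's map $X \mapsto TX^\ast T^\ast$). The only variation is in one verification: where the paper uses the partial-fraction identity (\ref{mimimi}) together with the automorphism $X\mapsto -X^{-1}$ of $\Pi(H)$ to see that $f_r$ preserves $\Pi(H)$, you compute $\operatorname{Im} f_r(Y)$ directly from the quotient form $Y(rY+(1-r)I)^{-1}$.
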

\begin{proof}
We define order isomorphisms $\psi: [0, I]\to [A, B]$ and $\psi': [C, D]\to [0, I]$ by  
$\psi(X)=(B-A)^{1/2}X(B-A)^{1/2}+A$ and $\psi'(X)=(D-C)^{-1/2}(X-C)(D-C)^{-1/2}$.
Then $\psi'\circ\phi\circ\psi : [0, I]\to [0, I]$ is an order automorphism. 
It suffices to prove this lemma for $\psi'\circ\phi\circ\psi$ instead of $\phi$. 
Hence we may assume $[A, B]=[C, D]=[0, I]$. 

Let $\phi$ be an order automorphism of $[0, I]$.
By Theorem \ref{fpfq}, we obtain  
$$
\phi(X) =  f_q \left( \left( f_p (TT^\ast ) \right) ^{-1/2} f_p (TXT^\ast) \left( f_p (TT^\ast ) \right) ^{-1/2} \right), \ \ \
X \in [0,I].
$$
Note that this map can be decomposed into four order isomorphisms
\[
\begin{split}
\phi_1&: [0, I]\to [0, TT^\ast],\,\, X\mapsto TXT^*,\\
\phi_2&: [0, TT^\ast]\to [0, f_p(TT^*)],\,\, X\mapsto f_p(X),\\
\phi_3&: [0, f_p(TT^*)]\to [0, I],\,\, X\mapsto \left( f_p (TT^\ast ) \right) ^{-1/2} X \left( f_p (TT^\ast ) \right) ^{-1/2},\,\,\text{and}\\
\phi_4&: [0, I]\to [0, I],\,\, X\mapsto f_q(X).
\end{split}
\]
It suffices to prove the statement for $\phi_j$, $j=1, 2, 3, 4$. 
It is easy to see that $\phi_3$ continuously extends to a biholomorphic automorphism of $B(H)$ that maps $\Pi(H)$ onto itself.
The same holds for $\phi_1$  if $T$ is linear.
If $T$ is conjugate-linear, then the map $X\mapsto TX^*T^*$ on $B(H)$ extends $\phi_1$ and is a biholomorphic automorphism of $B(H)$ that maps $\Pi(H)$ onto itself.

Because for every $X+iY \in \Pi (H)$ (where $X, Y\in S(H)$) and every $Z \in S(H)$ we have
$$
(X+iY)^{-1} = Y^{-1/2}  ( Y^{-1/2} X  Y^{-1/2} + iI)^{-1} Y^{-1/2}
$$
and 
$$
(iI + Z)^{-1} = (I + Z^2)^{-1/2} (Z - iI)  (I + Z^2)^{-1/2},
$$
the map $X \mapsto -X^{-1}$ is a biholomorphic automorphism of $\Pi (H)$.
Let $r$ be a real number $<1$, $r\not=0$. Using
\begin{equation}\label{mimimi}
f_r (x) = {1 \over r} - {1 - r \over r^2} \left( {1-r \over r} + x \right)^{-1},
\end{equation}
and
$$
f_{r}^{-1} = f_{r \over r-1}
$$
we see that  $f_r$ extends to a biholomorphic map $X\mapsto f_r(X)$ from $\{X\in B(H)\, : \, 1- 1/r\notin\sigma(X)\}$ onto $\{X\in B(H)\, : \, 1/r\notin\sigma(X)\}$.
Since the map $X\mapsto -X^{-1}$ maps $\Pi(H)$ onto itself, we finally conclude that the map $X \mapsto f_r (X)$ is a biholomorphic automorphism of $\Pi (H)$.
\end{proof}

Let us prove that the first condition of the main theorem implies the second one.
\begin{proposition}\label{pikanogavicka}
Let $U\subset S(H)$ be an operator domain and $\phi : U\to S(H)$ be a local order isomorphism. 
Then $\phi$ has a unique continuous extension to  $U\cup \Pi (H)$ that maps  ${\Pi} (H)$ biholomorphically onto itself.
\end{proposition}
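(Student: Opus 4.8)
The plan is to construct the extension by gluing together the local biholomorphic extensions furnished by Lemmas \ref{[]} and \ref{ABCD}, to check that these local pieces agree on $\Pi(H)$ by means of the identity theorem (Proposition \ref{identity}), and finally to establish uniqueness through a boundary reflection argument. First I would produce the local data: for each $X\in U$, Lemma \ref{[]} gives operators $A,B$ with $A<X<B$ and $[A,B]\subset U$ on which $\phi$ restricts to an order isomorphism onto $[\phi(A),\phi(B)]$, and then Lemma \ref{ABCD} extends this restriction to a biholomorphic map $\Phi_X$ on an open connected set $\cU_X\supset\Pi(H)$ with $\Phi_X(\Pi(H))=\Pi(H)$. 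In particular $\Phi_X$ agrees with $\phi$ on the open operator domain $(A,B)=\{Z\in S(H):A<Z<B\}$, which is a neighbourhood of $X$ in $S(H)$; this observation already shows that $\phi$ is continuous on $U$.

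The crucial and most delicate step is to show that all these local extensions induce one and the same map on $\Pi(H)$. I would first treat two points $X_0,X_1\in U$ whose intervals have overlapping interiors, so that $W:=(A_0,B_0)\cap(A_1,B_1)$ is a nonempty open convex, hence connected, subset of $S(H)$, i.e. an operator domain, on which $\Phi_{X_0}=\phi=\Phi_{X_1}$. To pass from this real overlap to $\Pi(H)$ I would use a vertical ray: fixing $Z_0\in W$, the set $\{Z_0+itI:t\ge0\}$ lies entirely in $\cU_{X_0}\cap\cU_{X_1}$ (its $t>0$ part is in $\Pi(H)$, and $Z_0$ itself lies in $W$), so $W$ and $\Pi(H)$ belong to the same connected component $\cU'$ of $\cU_{X_0}\cap\cU_{X_1}$. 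Applying Proposition \ref{identity} to the holomorphic map $\Phi_{X_0}-\Phi_{X_1}$ on $\cU'$, which vanishes on the operator domain $W$, yields $\Phi_{X_0}=\Phi_{X_1}$ on all of $\cU'$, in particular on $\Pi(H)$. For arbitrary $X_0,X_1\in U$ I would join them by a path in the path-connected set $U$, cover its (compact) image by finitely many of the open interiors $(A,B)$ so that consecutive ones overlap, and chain the previous argument. This gluing, together with the connectivity bookkeeping needed to invoke Proposition \ref{identity}, is where I expect the main work to lie.

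Once consistency is in hand, the local restrictions $\Phi_X|_{\Pi(H)}$ all coincide with a single biholomorphic automorphism $g$ of $\Pi(H)$, and I would define $\widetilde\phi:=\phi$ on $U$ and $\widetilde\phi:=g$ on $\Pi(H)$. Continuity at a point of $\Pi(H)$ is clear since $\Pi(H)$ is open and $g$ is holomorphic. At a point $X\in U$, the extension coincides with the continuous map $\Phi_X$ on the relative neighbourhood $(A,B)\cup\Pi(H)$ (using $\phi=\Phi_X$ on $(A,B)$ and $g=\Phi_X|_{\Pi(H)}$), so $\widetilde\phi$ is continuous there as well; hence $\widetilde\phi$ is a continuous extension to $U\cup\Pi(H)$ restricting to a biholomorphic automorphism of $\Pi(H)$.

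For uniqueness, let $\Psi$ be any continuous extension with the stated properties; since $\Psi=\widetilde\phi=\phi$ on $U$, it suffices to prove $\Psi=\Phi_X$ on $\Pi(H)$ for a fixed $X$. Setting $\xi:=\Psi-\Phi_X$, which is holomorphic on $\Pi(H)$, I would fix $Z_0\in(A,B)$, a positive invertible $C\in S(H)$, and vectors $x,y\in H$, and consider the scalar function $z\mapsto\langle\xi(Z_0+zC)x,y\rangle$; for $\operatorname{Im}z>0$ the argument lies in $\Pi(H)$, this function is holomorphic on the upper half-plane, and by continuity of both $\Psi$ and $\Phi_X$ it extends continuously to a real segment about $0$, where it vanishes because $\Psi=\Phi_X=\phi$ on $(A,B)$. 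Schwarz reflection then forces it to vanish identically, and evaluating at $z=i$ gives $\xi=0$ on the open set $\{Z_0+iC:Z_0\in(A,B),\ C>0\text{ invertible}\}\subset\Pi(H)$. Since $\Pi(H)$ is connected, the standard identity theorem (reducing to scalar functions $\langle\xi(\cdot)x,y\rangle$) yields $\xi\equiv0$ on $\Pi(H)$, completing the proof.
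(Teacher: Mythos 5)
Your proposal is correct and follows essentially the same route as the paper: local biholomorphic extensions via Lemmas \ref{[]} and \ref{ABCD}, agreement of the pieces on $\Pi(H)$ via Proposition \ref{identity}, and a path-connectedness/compactness chaining argument over $U$. The extra details you supply---the vertical-ray connectivity bookkeeping, the continuity check for the glued map, and the Schwarz-reflection proof of uniqueness---are all sound and merely make explicit what the paper leaves implicit.
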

\begin{proof}
By Lemma \ref{[]}, for each $X\in U$, there exist operators $A_X<X<B_X$, $[A_X, B_X]\subset U$, $\phi(A_X)<\phi(X)<\phi(B_X)$, $[\phi(A_X), \phi(B_X)]\subset \phi(U)$ such that $\phi([A_X, B_X])=[\phi(A_X), \phi(B_X)]$ and  $\phi : [A_X, B_X]\to [\phi(A_X), \phi(B_X)]$ is an order isomorphism. 
By Lemma \ref{ABCD}, $\phi$ restricted to $[A_X, B_X]$ extends to a biholomorphic map $\phi_X$ defined on some open subset of $B(H)$  containing $[A_X, B_X]\cup \Pi(H)$ which maps $[A_X, B_X]$ bijectively onto $[\phi(A_X), \phi(B_X)]$ and $\Pi(H)$ biholomorphically onto itself. 
Let $X, Y\in U$. 
Suppose that there exist $A, B\in U$ with $A<B$ and $[A, B]\subset [A_X, B_X]\cap [A_Y, B_Y]$. 
By Proposition \ref{identity}, we have $\phi_X(Z)=\phi_Y(Z)$, $Z\in \Pi(H)$. 
Since $U$ is an operator domain, it is path-connected. 
Therefore, a compactness argument implies that $\phi_X(Z)=\phi_Y(Z)$, $Z\in \Pi(H)$, holds for any $X, Y\in U$. 
Hence we obtain the desired conclusion.
\end{proof}

\section{Explicit formulae}
In what follows, we fix an orthonormal basis $\{ e_\alpha \, : \, \alpha \in J \}$ of $H$.  For $X\in B(H)$, let $X^t$ denote the transpose of $X$ with respect to this orthonormal basis. 
More precisely, $X^t$ is the unique bounded linear operator acting on $H$ such that
$$
\langle X^t e_\alpha , e_\beta \rangle = \langle X e_\beta , e_\alpha \rangle, \ \ \ \alpha, \beta \in J.
$$

We first give a concrete formula for biholomorphic automorphisms of $\Pi(H)$, which might be known among specialists. 
We give its proof for completeness. 
\begin{proposition}\label{harris}
A map $\phi : {\Pi} (H) \to {\Pi} (H)$ is a biholomorphic automorphism if and only if  there exist a bounded linear bijection $T : H \to H$ and $A,B,C \in S(H)$ such that either
\[
\begin{split}
\phi (X) &= T((X-B)^{-1} + A)^{-1} T^\ast +C,\quad X  \in {\Pi} (H), \text{ or}\\
\phi (X) &= T((X^t-B)^{-1} + A)^{-1} T^\ast +C,\quad X  \in {\Pi} (H).
\end{split}
\]
Moreover, if $\phi : {\Pi} (H) \to {\Pi} (H)$ is a biholomorphic automorphism and there exists a sequence $(X_n)\subset \Pi(H)$ with $X_n\to 0$ and $\phi(X_n)\to 0$ as $n\to \infty$, then there exist a bounded linear bijection $T : H \to H$ and an operator $A \in S(H)$ such that either
\[
\begin{split}
\phi (X) &= T(X^{-1} + A)^{-1} T^\ast,\quad X  \in {\Pi} (H), \text{ or}\\
\phi (X) &= T((X^t)^{-1} + A)^{-1} T^\ast,\quad X  \in {\Pi} (H).
\end{split}
\]
\end{proposition}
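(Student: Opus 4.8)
The statement combines an equivalence with a normalisation, so I would organise the argument into an easy ``if'' direction, a harder ``only if'' direction, and the final simplification.

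\emph{The ``if'' direction.} First I would check that each elementary map is a biholomorphic automorphism of $\Pi(H)$ and that the displayed maps are composites of these. A translation $X\mapsto X+D$ by a self-adjoint $D$ preserves both $\Pi(H)$ and $\Pi(H)^\ast$; a congruence $X\mapsto TXT^\ast$ with $T$ a bounded linear bijection preserves $\Pi(H)$ since $\mathrm{Im}(TXT^\ast)=T(\mathrm{Im}\,X)T^\ast$; the transpose $X\mapsto X^t$ is $\mathbb{C}$-linear and preserves $\Pi(H)$ because $\mathrm{Im}(X^t)=(\mathrm{Im}\,X)^t$ is again positive and invertible. Finally, exactly as in the proof of Lemma~\ref{ABCD}, the inversion $X\mapsto X^{-1}$ is a biholomorphism of $\Pi(H)$ onto $\Pi(H)^\ast$ and vice versa, since $\mathrm{Im}(X^{-1})=-X^{-1}(\mathrm{Im}\,X)(X^{-1})^\ast$. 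Reading the formula from the inside out, $X\mapsto X-B$ stays in $\Pi(H)$, the first inversion moves to $\Pi(H)^\ast$, adding $A$ stays there, the second inversion returns to $\Pi(H)$, and the congruence by $T$ and translation by $C$ keep us in $\Pi(H)$; hence each displayed map, and its transpose variant, is a biholomorphic automorphism.

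\emph{The ``only if'' direction.} The plan is to pass to a bounded model. I would introduce the Cayley transform $\mathcal{C}(Z)=(Z-iI)(Z+iI)^{-1}$; writing $P=Z+iI$, a short computation gives
\[
I-\mathcal{C}(Z)^\ast\mathcal{C}(Z)=4\,(P^{-1})^\ast(\mathrm{Im}\,Z)\,P^{-1},
\]
so $\mathcal{C}$ is a biholomorphism of $\Pi(H)$ onto the open operator unit ball $\mathcal{B}=\{W\in B(H):\|W\|<1\}$, with inverse $\mathcal{C}^{-1}(W)=i(I-W)^{-1}(I+W)$. Next I would note that the explicit maps already act transitively on $\Pi(H)$: given $Z_0=X_0+iY_0$ with $Y_0>0$, the map $X\mapsto Y_0^{1/2}XY_0^{1/2}+X_0$ sends $iI$ to $Z_0$. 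Thus, after composing with such a map, it suffices to treat a biholomorphic automorphism $\phi$ with $\phi(iI)=iI$. Conjugating by $\mathcal{C}$ turns $\phi$ into an automorphism $\widetilde\phi=\mathcal{C}\phi\mathcal{C}^{-1}$ of $\mathcal{B}$ fixing $0$. Since $\mathcal{B}$ is a bounded balanced domain, Cartan's uniqueness theorem forces $\widetilde\phi$ to be linear, hence a surjective linear isometry of $B(H)$; by Kadison's description of isometries of a $C^\ast$-algebra, $\widetilde\phi(W)=UWV$ or $\widetilde\phi(W)=UW^tV$ for unitaries $U,V$, which is exactly the source of the transpose dichotomy in the statement.

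It then remains to transport these linear isometries back through $\mathcal{C}^{-1}$ and verify that $\mathcal{C}^{-1}\widetilde\phi\,\mathcal{C}$ lies in the explicit family fixing $iI$, thereby recovering the parameters $A,B$ (with $C$ supplied by the transitive factor); composing with that factor completes the classification. I expect this back-transport to be the main obstacle: the expression $\mathcal{C}^{-1}(U\,\mathcal{C}(Z)\,V)$ is a rational operator function that must be massaged into the nested form $T((X-B)^{-1}+A)^{-1}T^\ast+C$, and the bookkeeping is delicate because $U$ and $V$ are independent. The conceptual inputs (the Cayley equivalence, Cartan's theorem, and Kadison's isometry theorem) are standard and remain valid in the operator-norm topology, where Harris's theory of bounded symmetric domains applies, but they must be cited in that infinite-dimensional setting.

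\emph{The normalisation.} Suppose finally that $X_n\to0$ and $\phi(X_n)\to0$. The involution $\iota(X)=-X^{-1}$ is an automorphism of $\Pi(H)$ (it interchanges the distinguished boundary points $0$ and ``$\infty$'' and fixes $iI$). Using the normal form already obtained, the limit of $\phi(X)$ as $X\to0$ is sequence-independent, so the hypothesis upgrades to a genuine continuous extension with value $0$ at the boundary point $0$; equivalently $\iota\phi\iota$ fixes $\infty$. An automorphism fixing $\infty$ must be affine, that is $X\mapsto SXS^\ast+C$ or $X\mapsto SX^tS^\ast+C$: in the normal form this is the vanishing $A=0$, which is forced because an invertible $A$ would send $\infty$ to the finite point $SA^{-1}S^\ast+C$. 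Writing $\phi=\iota\circ\psi\circ\iota$ with $\psi$ affine and using the identity
\[
C-SX^{-1}S^\ast=-S\bigl(X^{-1}+A\bigr)S^\ast,\qquad A=-S^{-1}C(S^\ast)^{-1},
\]
a one-line manipulation yields $\phi(X)=T(X^{-1}+A)^{-1}T^\ast$ with $T=(S^\ast)^{-1}$, and the transpose case is identical with $X^{-1}$ replaced by $(X^t)^{-1}$. This gives the asserted simplified form, the only point needing care being the boundary-extension argument that promotes the single convergent sequence to $\phi(0)=0$.
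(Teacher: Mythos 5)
Your ``if'' direction is correct and matches the paper, and your reduction to an automorphism fixing $iI$ via transitivity, followed by the Cayley transform and the classification of ball automorphisms fixing the origin ($\widetilde\phi(W)=UWV$ or $UW^tV$), is also the paper's toolkit (the paper cites Theorem 1 of \cite{Har} together with Moln\'ar's Theorem A.9 for exactly this step). The genuine gap is the step you yourself flag as ``the main obstacle'': you never carry out the back-transport of $W\mapsto UWV$ through the Cayley transform, and this is not bookkeeping --- it is the mathematical core of the first statement. When $V\neq U^\ast$, the map $X\mapsto \mathcal{C}^{-1}\bigl(U\,\mathcal{C}(X)\,V\bigr)$ is \emph{not} given by functional calculus; unwinding it produces an operator M\"obius transformation $X\mapsto \bigl(i(U+V^\ast)X+(U-V^\ast)\bigr)\bigl((V^\ast-U)X+i(V^\ast+U)\bigr)^{-1}$, and showing that every such map factors into the nested form $T((X-B)^{-1}+A)^{-1}T^\ast+C$ with \emph{self-adjoint} $A,B,C$ and a congruence (not a similarity) by an invertible $T$ is a nontrivial factorization theorem, with genuine case analysis when the block $V^\ast-U$ is neither zero nor invertible. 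The paper's architecture is designed precisely to avoid this: it proves the normalized (second) statement \emph{first}, where the hypothesis $X_n\to 0$, $\phi(X_n)\to 0$ forces $-uv=-I$, i.e.\ $v=u^\ast$, after which $W\mapsto uWu^\ast$ commutes with the Cayley transform by functional calculus and the formula drops out; the general (first) statement is then deduced from the normalized one by invoking Harris's boundary-extension theorems (Theorems 2 and 3 of \cite{Har}) to find a point $X_0\in S(H)$ at which $\phi$ extends continuously, and translating so that the sequence hypothesis holds. Your proposal runs the logic in the opposite direction, which puts the unresolved difficulty squarely on the critical path.

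Your normalization step also has two holes even if one grants the first statement. First, you assert that the nested form makes the boundary limit of $\phi$ at $0$ sequence-independent; this requires a dichotomy of the kind the paper proves in Lemma \ref{diverge}: writing $\phi(X)=T\Theta_A(X-B)T^\ast+C$, either $-B\in W_A$ and the limit exists, or $\|\phi(X_n)\|\to\infty$ for \emph{every} sequence $X_n\to 0$, contradicting the hypothesis. Second, your claim that an automorphism ``fixing $\infty$'' must be affine is justified only by excluding invertible $A$ (finite image of $\infty$) and accepting $A=0$ (affine); it says nothing about $A$ nonzero and non-invertible, which is a genuine intermediate case (and the generic one in infinite dimensions), nor is the notion of ``fixing $\infty$'' made precise without exactly this kind of blow-up analysis. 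The concluding identity $\phi=\iota\circ\psi\circ\iota$ with $\psi(X)=SXS^\ast+C$, yielding $\phi(X)=T(X^{-1}+A)^{-1}T^\ast$ with $T=(S^\ast)^{-1}$ and $A=-S^{-1}C(S^\ast)^{-1}$, is correct as algebra, but it rests on these two unproved reductions.
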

\begin{proof}
We refer to \cite{Har}. 
Recall that the open unit ball $B(H)_0$ of $B(H)$ is biholomorphically equivalent to ${\Pi} (H)$  through the Cayley transform $\Psi: Y\mapsto i(Y+I)(I-Y)^{-1}$. 
The inverse is $X\mapsto (X-iI)(X+iI)^{-1}$.
See Theorem 12 of \cite{Har} with $\mathfrak{A}=B(H)$ and $V=I$.

It is easy to see that if $A, B, C, T$ are as in the statement then 
$$
X\mapsto T((X-B)^{-1}+A)^{-1}T^\ast +C
$$
and 
$$
X\mapsto T((X^t-B)^{-1}+A)^{-1}T^\ast +C
$$
are biholomorphic automorphisms of ${\Pi} (H)$. 
Indeed, it is the composition of five biholomorphic automorphisms $X\mapsto X-B$ (or $X\mapsto X^t-B$), $X\mapsto -X^{-1}$, $X\mapsto X-A$, $X\mapsto -X^{-1}$ and $X\mapsto TXT^\ast +C$ of $\Pi(H)$. 

Let $\phi$ be a biholomorphic automorphism of $\Pi(H)$. 
We know that the map $\psi:=\Psi^{-1}\circ \phi\circ\Psi$ is a biholomorphic automorphism of $B(H)_0$. 
By \cite[Theorems 2 and 3]{Har}, $\psi$ extends (uniquely) to a homeomorphism from the closed unit ball of $B(H)$ onto itself, and moreover, such an extension maps the unitary group of $H$ onto itself (see the equation (12) of \cite{Har} and the formula that appears two lines above this equation). 
Since $\Psi^{-1}$ (has a continuous extension that) maps $S(H)$ onto the collection 
$$
\{u \,:\, u\text{ is a unitary operator of }H \text{ such that }u-1\text{ is invertible}\},
$$
which is a dense open subset of the unitary group, we see that there exists a dense open subset $U\subset S(H)$ such that $\phi = \Psi\circ \psi\circ\Psi^{-1}$ has a continuous extension to $\Pi(H)\cup U$ that maps $U$ into $S(H)$. 
Fix an element $X_0\in U$. 
Define a biholomorphic map $\tilde{\phi}$ of $\Pi(H)$ by $\tilde{\phi}(X) = \phi(X+X_0) -\lim_{0<c\to 0}\phi(icI+X_0)$. 
Then $\tilde{\phi}$ has a continuous extension to $\Pi(H)\cup\{0\}$ and $\lim_{0<c\to 0}\tilde{\phi}(icI)=0$. 
Therefore, in order to complete the proof, it suffices to show the latter half of the statement of the proposition. 

Let $\phi$ be a biholomorphic automorphism of $\Pi(H)$, and suppose that there exists a sequence $(X_n)\subset \Pi(H)$ with $X_n\to 0$ and $\phi(X_n)\to 0$. 
Take operators $A_1, A_2\in S(H)$ with $\phi(iI)=A_1+iA_2$. Then $A_2>0$. 
Consider the map $f$ defined by $f(X) = T(X^{-1}+A)^{-1}T^\ast$, $X\in \Pi(H)$, where 
$$
A=A_2^{-1/2}A_1A_2^{-1/2},\quad T= A_2^{1/2}(A^2+I)^{1/2}.
$$
A straightforward calculation shows that $f(iI)=\phi(iI)$ and $f^{-1}\circ \phi(X_n)\to 0$ as $n\to \infty$. 
Consider biholomorphic automorphisms $\psi=\Psi^{-1}\circ \phi\circ\Psi$ and $g=\Psi^{-1}\circ f\circ\Psi$ of $B(H)_0$. 
Since $f(iI)=\phi(iI)$, we obtain $g^{-1}\circ \psi (0)= 0$. 
By \cite[Theorem 1]{Har} and \cite[Theorem A.9]{Mo2}, there exists a pair of unitaries $u, v$ with either $g^{-1}\circ \psi (Y)= uYv$, $Y\in B(H)_0$, or $g^{-1}\circ \psi (Y)= uY^tv$, $Y\in B(H)_0$.
Moreover, we see that $\Psi^{-1}(X_n)\to -I$ and 
$$
g^{-1}\circ \psi (\Psi^{-1}(X_n))= \Psi^{-1}\circ f^{-1}\circ \phi (X_n) \to -I
$$ 
as $n\to \infty$.
Hence we obtain $-I = -uv$, or equivalently, $v=u^\ast$. 
Thus we have either $\psi(Y) = g(uYu^\ast)$, $Y\in B(H)_0$, or $\psi(Y) = g(uY^tu^\ast)$, $Y\in B(H)_0$. 
Since $\Psi^{-1}$ is given by functional calculus, we obtain 
\[
\begin{split}
\phi(X) &=\Psi\circ \psi(\Psi^{-1}(X))\\
&= \Psi\circ g(u\Psi^{-1}(X)u^*)\\
&= \Psi\circ g\circ \Psi^{-1}(uXu^*)\\
&= f(uXu^\ast) \\
&= T((uXu^\ast)^{-1}+A)^{-1}T^\ast \\
&= Tu(X^{-1}+u^\ast Au)^{-1}(Tu)^\ast,\quad X\in \Pi(H),
\end{split}
\]
in the former case, and similarly 
$$
\phi(X)= Tu((X^t)^{-1}+u^\ast Au)^{-1}(Tu)^\ast,\quad X\in \Pi(H),
$$
in the latter case. 
This completes the proof.
\end{proof}

Let $U\subset S(H)$ be an operator domain and $\phi : U \to S(H)$ satisfy the second condition of the main theorem. 
Hence $\phi : U \cup \Pi (H) \to B(H)$ is continuous and restricts to a biholomorphic automorphism of $\Pi(H)$. 
We will show that then the third condition of the main result is satisfied, which completes the proof of Theorem \ref{punodosta}.
Fix an operator $X_0\in U$. 
Considering the map $X\mapsto \phi(X+X_0)-\phi(X_0)$ instead of $\phi$, we may assume $0 \in U$ and $\phi(0)=0$. 

By Proposition \ref{harris}, we have either
\[
\begin{split}
\phi(X) &= T(X^{-1} + A)^{-1}T^\ast,\quad X\in {\Pi} (H), \text{ or}\\
\phi(X) &= T((X^t)^{-1} + A)^{-1}T^\ast,\quad X\in {\Pi} (H),
\end{split}
\]
where $A\in S(H)$ and $T : H\to H$ is a bounded linear bijection.
Recall that the maps $X\mapsto X^t$ and $X\mapsto T^{-1}X(T^\ast)^{-1}$ are biholomorphic automorphisms of $B(H)$ that map $\Pi(H)$ onto itself, $\Pi(H)^*$ onto itself, and restrict to order automorphisms of $S(H)$. 
Composing the map $\phi$ with $X\mapsto X^t$, if necessary, and $X\mapsto T^{-1}X (T^\ast)^{-1}$, we may assume with no loss of generality that $\phi$ is of the form 
\[
\begin{split}
\phi(X) &= (X^{-1} + A)^{-1}\\
&=(X^{-1}(I+XA))^{-1}\\
&= (XA+I)^{-1}X,\quad X\in {\Pi} (H).
\end{split}
\]

We define 
\[
W_A := \{ X\in B(H)\, : \, XA+I {\rm \ \, is\ \, invertible}\} = \{ X\in B(H)\, : \, AX+I {\rm \ \, is\ \, invertible}\},
\] 
which is an open subset of $B(H)$ containing both the generalized upper half-plane and the generalized lower half-plane.
We further define a map $\Theta_A : W_A\to B(H)$ by $\Theta_A(X) = (XA+I)^{-1}X$. 
Note that 
\[
\begin{split}
\Theta_A(X)&= (XA+I)^{-1}X \\
&= (XA+I)^{-1}X(AX+I)(AX+I)^{-1}\\
&= (XA+I)^{-1}(XA+I)X(AX+I)^{-1}\\
&=X(AX+I)^{-1}. 
\end{split}
\]

\begin{lemma}\label{diverge}
Suppose that a sequence $(X_n)\subset W_A$ converges to an operator $X\in B(H)\setminus W_A$. 
Then we have $\|\Theta_A(X_n)\|\to \infty$.
\end{lemma}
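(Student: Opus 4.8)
The plan is to reduce everything to the behaviour of the inverses $(X_nA+I)^{-1}$ and to exploit an exact algebraic identity relating them to $\Theta_A$. First I would dispose of the trivial case: if $A=0$ then $X A + I = I$ for every $X$, so $W_A = B(H)$ and $B(H)\setminus W_A=\emptyset$; the statement then holds vacuously, and I may assume $A\neq 0$.

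The key observation is the identity
\[
(X_nA+I)^{-1} = I - \Theta_A(X_n)A,
\]
valid for every $X_n\in W_A$. It follows immediately from $\Theta_A(X_n)=(X_nA+I)^{-1}X_n$, since
\[
\Theta_A(X_n)A = (X_nA+I)^{-1}X_nA = (X_nA+I)^{-1}\bigl((X_nA+I)-I\bigr) = I-(X_nA+I)^{-1}.
\]
Thus controlling $\|\Theta_A(X_n)\|$ is essentially the same as controlling $\|(X_nA+I)^{-1}\|$, and this is what makes the lemma routine: one need not estimate $X_n$ directly at all.

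Next I would invoke the standard Banach-algebra estimate that inverses blow up near a non-invertible operator: if $T$ is invertible and $S$ is not, then $\|T^{-1}\|\,\|S-T\|\ge 1$ (equivalently, the open ball of radius $1/\|T^{-1}\|$ about $T$ consists of invertible operators). Applying this with $T=X_nA+I$, which is invertible because $X_n\in W_A$, and $S=XA+I$, which is not invertible because $X\in B(H)\setminus W_A$, and noting $S-T=(X-X_n)A$, gives
\[
\|(X_nA+I)^{-1}\| \ge \frac{1}{\|(X-X_n)A\|} \ge \frac{1}{\|A\|\,\|X-X_n\|}.
\]
Since $X_n\to X$, the right-hand side tends to $\infty$, so $\|(X_nA+I)^{-1}\|\to\infty$.

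Finally I would combine the two steps. By the identity, $\|I-\Theta_A(X_n)A\|=\|(X_nA+I)^{-1}\|\to\infty$, whence $\|\Theta_A(X_n)A\|\ge\|I-\Theta_A(X_n)A\|-1\to\infty$, and therefore $\|\Theta_A(X_n)\|\ge\|\Theta_A(X_n)A\|/\|A\|\to\infty$, as required. I expect no genuine obstacle: the one point to get right is the exact identity $(X_nA+I)^{-1}=I-\Theta_A(X_n)A$ in the second step, which lets one bypass any direct estimation of $X_n$ and reduces the claim to the familiar fact that the norms of inverses diverge as one approaches a non-invertible limit.
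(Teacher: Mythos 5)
Your proof is correct and takes essentially the same route as the paper's: the identity $\Theta_A(X_n)A = I-(X_nA+I)^{-1}$ combined with $\|\Theta_A(X_n)\|\,\|A\|\ge\|\Theta_A(X_n)A\|$ is exactly the paper's computation, and the divergence $\|(X_nA+I)^{-1}\|\to\infty$, which the paper simply asserts as following from the hypothesis, is the standard fact you verify via the Neumann-series (ball of invertibles) estimate. Your only additions are routine bookkeeping: the vacuous case $A=0$ and the explicit triangle-inequality step.
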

\begin{proof}
The assumption implies $\| (X_nA+I)^{-1}\|\to \infty$ as $n\to\infty$. 
Hence
\[
\begin{split}
\|\Theta_A(X_n)\|\|A\|&=\|(X_nA+I)^{-1}X_n\|\|A\|\\
&\ge \|(X_nA+I)^{-1}X_nA\| = \|I- (X_nA+I)^{-1}\|\to\infty,
\end{split}
\]
which completes the proof.
\end{proof}

Because $\phi$ is continuous on $U \cup \Pi (H)$ and $\Pi (H) \subset W_A$, the above lemma yields that $U \subset W_A$.

\begin{lemma}\label{A-A}
The map $\Theta_A$ is a biholomorphic map from $W_A$ onto $W_{-A}$, and $(\Theta_A)^{-1}= \Theta_{-A}$. 
\end{lemma}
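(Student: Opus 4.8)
The plan is to exhibit $\Theta_{-A}$ as a genuine two-sided inverse of $\Theta_A$ by a direct algebraic manipulation, and then to observe that both maps are holomorphic because they are built from operator multiplication and inversion. The crux is a single resolvent-type identity that simultaneously shows $\Theta_A(W_A)\subseteq W_{-A}$ and computes the composition $\Theta_{-A}\circ\Theta_A$.

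First I would fix $X\in W_A$ and set $Y=\Theta_A(X)=(XA+I)^{-1}X$. The key computation is
$$
YA = (XA+I)^{-1}XA = (XA+I)^{-1}\bigl((XA+I)-I\bigr) = I-(XA+I)^{-1},
$$
so that $I-YA=(XA+I)^{-1}$. Since $X\in W_A$ means $XA+I$ is invertible, $I-YA$ is invertible (its inverse is $XA+I$); equivalently $Y(-A)+I$ is invertible, i.e.\ $Y\in W_{-A}$. Hence $\Theta_A$ indeed maps $W_A$ into $W_{-A}$. Using the same identity,
$$
\Theta_{-A}(Y) = (I-YA)^{-1}Y = (XA+I)(XA+I)^{-1}X = X,
$$
so $\Theta_{-A}\circ\Theta_A=\mathrm{id}_{W_A}$.

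Next, applying the previous paragraph with $A$ replaced by $-A$ (note that $W_{-(-A)}=W_A$ and $\Theta_{-(-A)}=\Theta_A$) gives $\Theta_A\circ\Theta_{-A}=\mathrm{id}_{W_{-A}}$. Thus $\Theta_A:W_A\to W_{-A}$ is a bijection with inverse $\Theta_{-A}$. Finally, $\Theta_A$ is holomorphic on $W_A$ because $X\mapsto XA+I$ is affine, inversion $Z\mapsto Z^{-1}$ is holomorphic on the open set of invertible operators, and products of holomorphic maps are holomorphic; the same reasoning shows $\Theta_{-A}$ is holomorphic on $W_{-A}$. Since $\Theta_A$ and its inverse $\Theta_{-A}$ are both holomorphic, $\Theta_A$ is biholomorphic.

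There is no serious obstacle here: the statement reduces to the identity $YA=I-(XA+I)^{-1}$ together with its mirror image. The only point demanding a little care is keeping track of the two equivalent expressions $(XA+I)^{-1}X=X(AX+I)^{-1}$ (already recorded just before the lemma) and the two descriptions of $W_A$, so that the factors line up on the correct side when forming $\Theta_{-A}(Y)$.
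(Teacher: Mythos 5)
Your proof is correct and takes essentially the same route as the paper's: the identical resolvent-type identity (the paper writes it as $-A\Theta_A(X)+I=(AX+I)^{-1}$, you as $I-\Theta_A(X)A=(XA+I)^{-1}$, a left/right mirror image using the two expressions for $\Theta_A$) simultaneously gives $\Theta_A(W_A)\subseteq W_{-A}$ and $\Theta_{-A}\circ\Theta_A=\operatorname{id}_{W_A}$, with the reverse composition obtained by the same symmetry $A\mapsto -A$. The holomorphy argument via affine maps, inversion, and products also matches the paper's.
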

\begin{proof}
Let $X\in W_A$. 
Then 
$$
-A\Theta_A(X) +I = -AX(AX+I)^{-1} +(AX+I)(AX+I)^{-1} = (AX+I)^{-1}.
$$ 
It follows $\Theta_A(X)\in W_{-A}$ and 
$$
\Theta_{-A}\circ \Theta_{A} (X) = \Theta_{A}(X) (-A\Theta_A(X) +I)^{-1} = X(AX+I)^{-1}(AX+I) = X.
$$ 
Thus we have $\Theta_{-A}\circ \Theta_A = \operatorname{id}_{W_A}$. 
Similarly, we can prove $\Theta_A\circ \Theta_{-A} = \operatorname{id}_{W_{-A}}$. 
Hence we obtain $(\Theta_A)^{-1}= \Theta_{-A}$.
In order to see that $\Theta_A$ is biholomorphic, it suffices to show that $\Theta_A$ is holomorphic. 
This is easily seen by the fact that the maps $X\mapsto X$ and $X\mapsto (XA+I)^{-1}$ are holomorphic on $W_A$.
\end{proof}

The proof of the main theorem is almost complete. It remains only to take $\cU$ to be the connected component of $W_A$ that contains the zero operator and similarly, to denote by $\cV$ the connected component of $W_{-A}$ that contains $0$ (we do not know whether there exists $A \in S(H)$ such that $W_A$ is not connected but this does not matter here). Since clearly $\phi (\cU ) = \cV$, we are done.

Moreover, the main theorem combined with the above discussion shows that by considering the map $\Theta_A$ and its restriction $\hat{\Phi}_A$ to $\hat{U}_A := S(H)\cap W_A$ we can get the  full understanding of the structure of local order isomorphisms.
We have $\hat{\Phi}_A(X) = (XA+I)^{-1}X=X(AX+I)^{-1}=(\hat{\Phi}_A(X))^\ast$, $X\in\hat{U}_A$. 
Note that $\hat{U}_A$ is open in $S(H)$ and  $0\in \hat{U}_A$. 
Take the connected component $U_A\ni 0$ of $\hat{U}_A$ in $S(H)$. 
We also consider the restriction $\Phi_A$ of $\hat{\Phi}_A$ to $U_A$. 

It is easy to see:
\begin{itemize}
\item The map $\hat{\Phi}_A$ is a bijection from $\hat{U}_A$ onto $\hat{U}_{-A}$, and $(\hat{\Phi}_A)^{-1}= \hat{\Phi}_{-A}$, 
\item the map $\Phi_A$ is a bijection from $U_A$ onto $U_{-A}$, and $(\Phi_A)^{-1}= \Phi_{-A}$, and  
\item $U\subset U_A$. 
\end{itemize}

Let us summarize what we have obtained so far. 
\begin{theorem}\label{TPhi}
Let $U\subset S(H)$ be an operator domain with $0\in U$. 
Suppose that $\phi: U\to S(H)$ is a local order isomorphism with $\phi(0)=0$. 
Then there exist $A\in S(H)$ and an invertible operator $T\in B(H)$ such that either 
\begin{itemize}
\item for any $X\in U$, we have $X\in U_A$ and 
$$
\phi(X) = T\Phi_A(X)T^\ast =  T(XA+I)^{-1}XT^\ast, \text{ or} 
$$
\item for any $X\in U$, we have $X^t\in U_A$ and 
$$
\phi(X) = T\Phi_A(X^t)T^\ast =  T(X^tA+I)^{-1}X^tT^\ast. 
$$
\end{itemize}
Moreover, $\Phi_A : U_A\to U_{-A}$ extends to a biholomorphic map $\Theta_A: W_A\to W_{-A}$ that maps $\Pi(H)$ onto itself.
\end{theorem}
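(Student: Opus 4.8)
The plan is to assemble the deep inputs already established and to track domains carefully, since this theorem merely packages the normal-form computation carried out just above it. First I would apply Proposition~\ref{pikanogavicka} to extend $\phi$ continuously to $U\cup\Pi(H)$ so that its restriction to $\Pi(H)$ is a biholomorphic automorphism. To invoke the second half of Proposition~\ref{harris} I must exhibit a sequence in $\Pi(H)$ sent towards $0$: because $0\in U$, $\phi(0)=0$, and the extension is continuous, the points $X_n=(i/n)I\in\Pi(H)$ satisfy $X_n\to 0$ and $\phi(X_n)\to\phi(0)=0$. Proposition~\ref{harris} then furnishes a bounded linear bijection $T$ and $A\in S(H)$ with $\phi(X)=T(X^{-1}+A)^{-1}T^\ast$ for all $X\in\Pi(H)$, or else the same expression with $X$ replaced by $X^t$. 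Rewriting via the identity $(X^{-1}+A)^{-1}=(XA+I)^{-1}X=\Theta_A(X)$ recorded above, this becomes $\phi(X)=T\Theta_A(X)T^\ast$, respectively $\phi(X)=T\Theta_A(X^t)T^\ast$, on $\Pi(H)$.

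The step I expect to require the most attention is transporting this formula from $\Pi(H)$ to $U$ while pinning down the exact domain; the genuine depth of the result sits in the cited Propositions~\ref{pikanogavicka} and~\ref{harris}. In the first case, for $X\in U$ I would approach $X$ along $X+(i/n)I\in\Pi(H)$; continuity gives $\phi(X+(i/n)I)\to\phi(X)\in S(H)$, a bounded operator. Were $X\notin W_A$, Lemma~\ref{diverge} would force $\|\Theta_A(X+(i/n)I)\|\to\infty$, and since $T$ is invertible the bound $\|\Theta_A(Y)\|\le\|T^{-1}\|\,\|(T^\ast)^{-1}\|\,\|T\Theta_A(Y)T^\ast\|$ would make $\|\phi(X+(i/n)I)\|\to\infty$, a contradiction. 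Hence $X\in W_A\cap S(H)=\hat U_A$, and as $U$ is connected, contains $0$, and lies in $\hat U_A$, it is contained in the component $U_A$ through $0$; this yields $X\in U_A$ and $\phi(X)=T\Phi_A(X)T^\ast$. In the transpose case I would run the same argument on $X^t+(i/n)I=(X+(i/n)I)^t\in\Pi(H)$ to get $X^t\in\hat U_A$, and since $\{X^t:X\in U\}$ is connected and contains $0$ it lies in $U_A$, giving $X^t\in U_A$ and $\phi(X)=T\Phi_A(X^t)T^\ast$.

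For the ``moreover'' clause no new work is needed: Lemma~\ref{A-A} already identifies $\Theta_A$ as a biholomorphic map of $W_A$ onto $W_{-A}$ with inverse $\Theta_{-A}$, while $\Phi_A$ is by definition the restriction of $\Theta_A$ to $U_A$. That $\Theta_A$ maps $\Pi(H)$ onto itself I would read off from the factorization used in the proof of Proposition~\ref{harris}: on $\Pi(H)$ the map $\Theta_A$ is obtained by applying $X\mapsto-X^{-1}$, then $X\mapsto X-A$, then $X\mapsto-X^{-1}$, each a biholomorphic automorphism of $\Pi(H)$. Equivalently, it follows from $\phi(\Pi(H))=\Pi(H)$ together with the invariance of $\Pi(H)$ under conjugation by $T$ and under transposition.
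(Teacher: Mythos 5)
Your proposal is correct and follows essentially the same route as the paper: Proposition~\ref{pikanogavicka} for the continuous extension, the second half of Proposition~\ref{harris} (with the sequence $(i/n)I$, which the paper leaves implicit) for the formula on $\Pi(H)$, Lemma~\ref{diverge} plus continuity to get $U\subset W_A$ and hence $U\subset U_A$ by connectedness, and Lemma~\ref{A-A} together with the three-fold factorization for the ``moreover'' clause. The only cosmetic difference is that the paper first composes $\phi$ with $X\mapsto X^t$ and $X\mapsto T^{-1}X(T^\ast)^{-1}$ to reduce to $\phi=\Theta_A$ on $\Pi(H)$, whereas you carry $T$ and the transpose through the divergence argument explicitly via the norm inequality.
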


We continue with a uniqueness result. 
\begin{proposition}\label{AA'}
Let $A, A'\in S(H)$ and $T, T'\in B(H)$. 
Suppose that $T$ and $T'$ are invertible. 
\begin{enumerate}
\item If 
$$
T(XA+I)^{-1}XT^\ast= T'(XA'+I)^{-1}X(T')^\ast
$$
holds for every $X$ in some nonempty open subset of $U_A\cap U_{A'}$, then $A=A'$ and there exists a complex number $\lambda\in \C$ with $|\lambda|=1$ such that $T = \lambda T'$.
\item The equation 
$$
T(XA+I)^{-1}XT^\ast= T'(X^tA'+I)^{-1}X^t(T')^\ast
$$
cannot hold for every $X$ in any nonempty open subset of $U_A\cap \{Y^t\, :\, Y\in U_{A'}\}$. 
\end{enumerate}
\end{proposition}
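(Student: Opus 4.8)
The plan is to treat both sides of each claimed identity as holomorphic maps, apply the identity theorem (Proposition \ref{identity}) to promote the identity to a neighbourhood of the zero operator, and then read off everything from the first-order Taylor expansion at $0$. The key preliminary facts are that $\Theta_A(X)=X(AX+I)^{-1}$ is holomorphic with $\Theta_A(0)=0$ and expansion $\Theta_A(X)=X-XAX+XAXAX-\cdots$ near $0$, so that its derivative at $0$ is the identity; and that $X\mapsto X^t$ is a bounded \emph{complex-linear} map. The latter is what makes the identity theorem applicable in part (2) despite the presence of the transpose, since it keeps the right-hand side holomorphic.

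First I would promote the hypothesized identity from the given nonempty open subset $\Omega\subset S(H)$ to an identity on a connected open set $\mathcal{W}\subset B(H)$ containing $0$. In part (1) both sides are holomorphic on $W_A\cap W_{A'}$, and in part (2) on $W_A\cap W_{(A')^t}$, where $W_{(A')^t}=\{X\in B(H):X^t\in W_{A'}\}$; in either case this open set contains $\Pi(H)$ and $0$, using that $X\mapsto X^t$ maps $\Pi(H)$ into itself. Fixing $X_0\in\Omega$, the vertical segment $\{X_0+itI:t\ge 0\}$ stays in this set, since for $t>0$ it lies in $\Pi(H)$, and likewise $\{itI:t\ge 0\}$ connects $0$ to $\Pi(H)$; hence $\Omega$ and $0$ lie in one connected component $\mathcal{W}$. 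After shrinking $\Omega$ to a ball I apply Proposition \ref{identity} to the difference of the two sides to conclude that the identity holds throughout $\mathcal{W}$, in particular on a neighbourhood of $0$.

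For part (1), writing $S=(T')^{-1}T$ (invertible), the identity reads $S\Theta_A(X)S^\ast=\Theta_{A'}(X)$ near $0$. Matching the degree-one homogeneous parts of the two holomorphic sides gives $SVS^\ast=V$ for all $V\in B(H)$; taking $V=I$ yields $SS^\ast=I$, so $S$ is unitary, and then $SVS^{-1}=V$ for all $V$ forces $S=\lambda I$ with $|\lambda|=1$, that is $T=\lambda T'$. Substituting back gives $\Theta_A=\Theta_{A'}$ on $\mathcal{W}$, i.e. $(XA+I)^{-1}X=(XA'+I)^{-1}X$; evaluating at $X=iI\in\Pi(H)\subset\mathcal{W}$ gives $(iA+I)^{-1}=(iA'+I)^{-1}$, whence $A=A'$.

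For part (2), the same reduction and degree-one matching give $TVT^\ast=T'V^t(T')^\ast$ for all $V\in B(H)$, i.e. $SVS^\ast=V^t$ with $S=(T')^{-1}T$ invertible. Taking $V=I$ forces $S$ unitary, so $SVS^{-1}=V^t$ for all $V$. But $V\mapsto SVS^{-1}$ is an algebra automorphism while $V\mapsto V^t$ is an anti-automorphism, so applying the identity to a product $VW$ yields $V^tW^t=(VW)^t=W^tV^t$ for all $V,W$, forcing all operators to commute, which is impossible since $\dim H\ge 2$. This contradiction shows the identity cannot hold on any such $\Omega$. The main obstacle is the reduction step rather than these short first-order computations: one must verify that the transpose keeps the right-hand side holomorphic and carries $\Pi(H)$ into itself, so that both the identity theorem applies and the relevant domain is connected to $0$.
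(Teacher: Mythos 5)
Your proof is correct and follows essentially the same route as the paper: both promote the hypothesized identity to a larger domain via Proposition \ref{identity} and then reduce to a linear identity in $S=(T')^{-1}T$, finishing with the same algebra (unitarity of $S$, triviality of the center of $B(H)$ in part (1), and the automorphism-versus-anti-automorphism contradiction in part (2)). The only real difference is where the linearization happens: you match homogeneous terms of the power series at $0$ (which obliges you to verify, as you do, that $0$ lies in the same connected component as the given open set), whereas the paper substitutes $X^{-1}$ on $\Pi(H)$ to obtain the affine identity $A+X^{-1}=((T')^{-1}T)^\ast(A'+X^{-1})(T')^{-1}T$ and leaves the remaining algebra to the reader.
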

\begin{proof}
Assume that the equation holds for some nonempty open set.
By Proposition \ref{identity} the same equation holds for any $X\in \Pi(H)$. 
After an easy calculation we obtain 
$$
A+X^{-1}= ((T')^{-1}T)^\ast(A'+X^{-1})(T')^{-1}T,\quad X\in \Pi(H)
$$
in (1) and 
$$
A+X^{-1}= ((T')^{-1}T)^\ast(A'+(X^t)^{-1})(T')^{-1}T,\quad X\in \Pi(H)
$$
in (2). 
The rest of the proof is easy and is left to the reader.
\end{proof} 

We may apply Theorem \ref{TPhi} to obtain previously known results. 
As an example, we give the structure of order automorphisms of $S(H)$ first given by Moln\'ar. 
Further application of our results can be found in Section \ref{apply}.
\begin{theorem}[{\cite{Mo1}}]\label{molnar}
Let $\phi: S(H)\to S(H)$ be an order automorphism. 
Then there exist $B\in S(H)$ and a bounded bijective linear operator $T: H\to H$ such that either
\[
\begin{split}
\phi(X) &= TXT^\ast +B, \ \ \ X\in S(H), \text{ or}\\
\phi(X) &= TX^tT^\ast +B, \ \ \ X\in S(H).
\end{split}
\]
\end{theorem}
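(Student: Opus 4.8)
The plan is to deduce this as a direct corollary of Theorem \ref{TPhi}, the only genuinely new ingredient being a spectral argument that kills the operator $A$ once the domain is all of $S(H)$. First I would normalize away the additive constant. Setting $B := \phi(0) \in S(H)$ and $\psi(X) := \phi(X) - B$, the translation $Y \mapsto Y - B$ is itself an order automorphism of $S(H)$, so $\psi$ is an order automorphism with $\psi(0) = 0$. An order automorphism is in particular a local order isomorphism: taking $V = W = S(H)$ in the definition, the bijective order embedding $\psi : S(H) \to S(H)$ is an order isomorphism. Moreover $U := S(H)$ is an operator domain, being nonempty, open, and connected (it is convex). Hence Theorem \ref{TPhi} applies to $\psi$ on $U = S(H)$, yielding $A \in S(H)$ and an invertible $T \in B(H)$ such that either $\psi(X) = T(XA+I)^{-1}XT^\ast$ for all $X \in S(H)$, or the same with $X$ replaced by $X^t$.

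The hard part, and really the only step not handed to us verbatim by Theorem \ref{TPhi}, is to show $A = 0$. Theorem \ref{TPhi} asserts $X \in U_A$ for every $X \in U = S(H)$, so $U_A = S(H)$; since $U_A \subseteq W_A$, this means $XA + I$ is invertible for every $X \in S(H)$. I claim this forces $A = 0$. Indeed, if $A \neq 0$ then $\| A \| > 0$, and because $A$ is self-adjoint we have $\| A \|^2 = \| A^2 \| \in \sigma(A^2)$, so that $1 \in \sigma(A^2 / \| A \|^2)$. Choosing the self-adjoint operator $X := -A/\| A \|^2$ gives $XA + I = I - A^2/\| A \|^2$, which is therefore not invertible, contradicting $S(H) \subseteq W_A$. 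Hence $A = 0$.

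With $A = 0$ one has $\Phi_A(X) = (X \cdot 0 + I)^{-1} X = X$, so the two alternatives of Theorem \ref{TPhi} read $\psi(X) = TXT^\ast$ and $\psi(X) = TX^tT^\ast$. Undoing the translation, $\phi(X) = TXT^\ast + B$ or $\phi(X) = TX^tT^\ast + B$ for all $X \in S(H)$, which is exactly the asserted dichotomy. I expect the spectral computation forcing $A = 0$ to be the sole non-routine point; everything else is a direct specialization of Theorem \ref{TPhi} to the global domain $U = S(H)$ together with the harmless reduction to $\phi(0) = 0$.
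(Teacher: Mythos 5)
Your proposal is correct and follows essentially the same route as the paper: reduce to $\phi(0)=0$, observe that an order automorphism is a local order isomorphism, apply Theorem \ref{TPhi}, and conclude $A=0$ from $S(H)\subset U_A$. The only difference is that the paper dismisses the last step with ``clearly,'' whereas you supply an explicit (and correct) spectral argument via $X=-A/\|A\|^2$.
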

\begin{proof}
It suffices to consider the case $\phi(0)=0$. 
It is clear that $\phi$ is a local order isomorphism. 
By Theorem \ref{TPhi}, there exist $A\in S(H)$ and a bounded bijective linear operator $T: H\to H$ such that $S(H)\subset U_A$ and either 
\[
\begin{split}
\phi(X) &= T\Phi_A(X)T^\ast =  T(XA+I)^{-1}XT^\ast, \ \ \ X\in S(H), \text{ or}\\ 
\phi(X) &= T\Phi_A(X^t)T^\ast =  T(X^t A+I)^{-1}X^t T^\ast, \ \ \ X\in S(H).
\end{split}
\]
Clearly, the condition $S(H)\subset U_A$ implies $A=0$, hence we obtain the desired conclusion.
\end{proof}

Recall that $X^t$ denotes the transpose with respect to some orthonormal basis $\{ e_\alpha \, : \, \alpha \in J \}$ fixed in advance. Denote by $K : H \to H$ the conjugate-linear bijection given by $K \left( \sum_{\alpha \in J} \lambda_\alpha e_\alpha \right)
=  \sum_{\alpha \in J} \overline{\lambda_\alpha} e_\alpha$. Then it is easy to see that $X^t = KX^\ast K$ for every $X \in B(H)$. Hence, if $T\in B(H)$ is invertible and we set $S = TK$, then $S$ is a conjugate-linear bijective bounded operator on $H$ and $TX^t T^\ast = SX^\ast S^\ast$, $X \in B(H)$. The map $X \mapsto SX^\ast S^\ast$ is a biholomorphic automorphism of $B(H)$, but neither of the maps $X \mapsto X^\ast$ and $X \mapsto SXS^\ast$ is holomorphic (because they are conjugate-linear). As long as we were working with biholomorphic maps it was more convenient to write $X \mapsto TX^t T^\ast$ rather than $X\mapsto SX^\ast S^\ast$, since in the first form we have the composition of two biholomorphic automorphisms $X \mapsto X^t$ and $X \mapsto TXT^\ast$. Once we restrict our attention to operator domains in $S(H)$ and we are interested in (local) order isomorphisms, the second form is more appropriate because then $X = X^\ast$. In particular, the conclusion of the last theorem may be reformulated as follows: there  exist $B\in S(H)$ and a bounded bijective linear or conjugate-linear operator $T: H\to H$ such that $
\phi(X) = TXT^\ast +B$ for every $X \in S(H)$. The corresponding reformulation of Theorem \ref{TPhi} is left to the reader.

\section{Properties of local order isomorphisms}\label{pemi}
The aim of this section is to obtain a further understanding of the structure of local order isomorphisms on operator domains. 

First, we define maximality and an equivalence relation in the collection of all local order isomorphisms.
Let $U\subset S(H)$ be an operator domain and $\phi : U\to S(H)$ a local order isomorphism. 
We say $\phi$ is maximal if there does not exist an extension of $\phi$ to a local order isomorphism on an operator domain $V\subset S(H)$ with $U\subset V$ and $U\neq V$.
Let $U'\subset S(H)$ be another operator domain and $\phi : U\to S(H)$, $\phi' : U'\to S(H)$ two local order isomorphisms. 
We say $\phi$ and $\phi'$ are equivalent if there exist order automorphisms $\psi_1, \psi_2$ of $S(H)$ such that $\phi'= \psi_2\circ \phi\circ \psi_1$. 
Having in mind the general form of order automorphisms of $S(H)$ (Theorem \ref{molnar}), it is clear that maximality is stable by equivalence. 

In the first half of the current section, we examine the equivalence relation of local order isomorphisms. 
By Lemma \ref{diverge}, the local order isomorphism $\Phi_A : U_A\to S(H)$ is maximal.
An easy application of results in the preceding sections gives

\begin{proposition}\label{rdrit}
Let $U\subset S(H)$ be an operator domain and $\phi : U\to S(H)$ a local order isomorphism. 
\begin{enumerate}
\item There exist $A\in S(H)$ and an operator domain $V\subset U_A$ such that the restriction of $\Phi_A$ to $V$ is equivalent to $\phi$. 
\item The map $\phi : U\to S(H)$ uniquely extends to a maximal local order isomorphism. 
\end{enumerate}
\end{proposition}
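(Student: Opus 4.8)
The plan is to reduce everything to the normal form already obtained in Theorem~\ref{TPhi}, which says that after composing with order automorphisms of $S(H)$ (i.e.\ after passing to an equivalent map) any local order isomorphism that fixes $0$ agrees, on its domain, with a restriction of some $\Phi_A$ (possibly precomposed with the transpose). I would first dispose of the normalization: given an arbitrary local order isomorphism $\phi:U\to S(H)$, pick any $X_0\in U$ and replace $\phi$ by $X\mapsto\phi(X+X_0)-\phi(X_0)$ after translating the domain, so that $0\in U$ and $\phi(0)=0$. Translations by a fixed self-adjoint operator are order automorphisms of $S(H)$, so this replacement produces a map equivalent to $\phi$, and proving the proposition for the normalized map yields it for $\phi$.

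For part~(1), with $\phi$ normalized I would invoke Theorem~\ref{TPhi} directly. It supplies $A\in S(H)$ and an invertible $T\in B(H)$ such that either $\phi(X)=T\Phi_A(X)T^\ast$ for all $X\in U\subset U_A$, or $\phi(X)=T\Phi_A(X^t)T^\ast$. In the first case, the order automorphism $\psi_2:Y\mapsto T^{-1}Y(T^{-1})^\ast$ of $S(H)$ satisfies $\psi_2\circ\phi=\Phi_A|_U$, so taking $V=U$ shows the restriction of $\Phi_A$ to $V$ is equivalent to $\phi$ (undoing the earlier translations on both sides, which are themselves order automorphisms, keeps us within the equivalence relation). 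In the transpose case I would absorb the map $X\mapsto X^t$, which is an order automorphism of $S(H)$, into $\psi_1$: precomposing $\phi$ with transposition gives $X\mapsto T\Phi_A(X)T^\ast$ on the operator domain $V=\{Y^t:Y\in U\}\subset U_A$, and then the same $\psi_2$ finishes the job. Either way we land on a restriction of $\Phi_A$ to an operator domain $V\subset U_A$.

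For part~(2), existence of a maximal extension is immediate once part~(1) is in hand together with the maximality of $\Phi_A:U_A\to S(H)$, which was already recorded just before the proposition as a consequence of Lemma~\ref{diverge}. Indeed, writing $\phi$ (normalized) as $\psi_2^{-1}\circ(\Phi_A|_V)\circ\psi_1^{-1}$ for order automorphisms $\psi_1,\psi_2$, the map $\psi_2^{-1}\circ\Phi_A\circ\psi_1^{-1}$ defined on the operator domain $\psi_1(U_A)$ is a local order isomorphism extending $\phi$, and it is maximal because maximality is preserved by equivalence (Theorem~\ref{molnar} makes the order automorphisms of $S(H)$ explicit, so composing with them cannot enlarge a maximal domain). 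The remaining, and genuinely substantive, point is \emph{uniqueness}: two maximal extensions must coincide. Here I would use that any two such extensions agree with $\phi$ on the nonempty open set $U$, hence (after normalization) both restrict on a common open subset to maps of the form covered by Theorem~\ref{TPhi}; the rigidity from Proposition~\ref{AA'}, which forces the data $A$ and $T$ to match (up to an irrelevant unimodular scalar) and rules out mixing the transpose and non-transpose cases, pins down a single $\Phi_A$ up to equivalence. Since $\Phi_A$ has the canonical maximal domain $U_A$, the two extensions have the same domain and the same values.

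The main obstacle is the uniqueness claim in part~(2), because maximality alone does not obviously prevent two different maximal local order isomorphisms from agreeing on $U$ yet branching apart elsewhere; the real work is to show that agreement on an open set propagates globally. The tool for this is the identity-type theorem, Proposition~\ref{identity}: both extensions lift to biholomorphic automorphisms of $\Pi(H)$ (via the second condition of the main theorem, which holds for every local order isomorphism by Proposition~\ref{pikanogavicka}), and two holomorphic maps agreeing on the operator domain $U\subset S(H)$ must agree throughout the connected set containing $\Pi(H)$. Combined with the explicit description of the maximal domain $U_A$ as the connected component of $0$ in $\hat U_A=S(H)\cap W_A$, this forces the two domains and the two maps to coincide, completing the uniqueness argument.
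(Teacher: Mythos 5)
Your proposal is correct and takes essentially the approach the paper intends: the paper gives no separate argument for this proposition, calling it ``an easy application of results in the preceding sections,'' and the ingredients you assemble --- translation to normalize $0\in U$, $\phi(0)=0$, Theorem~\ref{TPhi} for the normal form (absorbing the transpose into $\psi_1$), maximality of $\Phi_A$ from Lemma~\ref{diverge} together with stability of maximality under equivalence via Theorem~\ref{molnar}, and the rigidity of Proposition~\ref{AA'} (itself resting on Proposition~\ref{identity}) to pin down $A$ and $T$ and force the two maximal extensions to coincide --- are precisely the results it points to.
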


In particular, a maximal local order isomorphism is equivalent to $\Phi_A : U_A\to S(H)$ for some $A\in S(H)$. 

\begin{proposition}\label{TAT*}
Let $A\in S(H)$. 
Let $T: H\to H$ be a bounded linear or conjugate-linear bijection. 
Then, $\Phi_A$ is equivalent to $\Phi_{TAT^\ast}$.  
More precisely, we have the following: 
Let $\psi$ be an order automorphism of $S(H)$ given by $\psi(X) = T^*XT$, $X\in S(H)$.
Then $\Phi_{TAT^\ast}= \psi^{-1}\circ \Phi_A\circ \psi$.
\end{proposition}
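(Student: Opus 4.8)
The plan is to verify the claimed conjugation identity $\Phi_{TAT^\ast} = \psi^{-1} \circ \Phi_A \circ \psi$ directly by unwinding the definitions, and to show that it is well-defined on the appropriate domains. Since $\psi(X) = T^\ast X T$ is an order automorphism of $S(H)$ (linear or conjugate-linear according to $T$), once the identity is established on operators the equivalence claim follows immediately from the definition of equivalence, with $\psi_1 = \psi$ and $\psi_2 = \psi^{-1}$. So the whole statement reduces to a computation together with a check that the domains match up, namely that $\psi$ maps $U_{TAT^\ast}$ onto $U_A$.

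First I would record that $\psi^{-1}$ is the map $X \mapsto (T^\ast)^{-1} X T^{-1} = (T^{-1})^\ast X T^{-1}$, so that $\psi^{-1}(X) = S^\ast X S$ with $S = T^{-1}$. The core is then a one-line algebraic manipulation: starting from $\hat\Phi_A(X) = (XA+I)^{-1}X$, I would compute $\psi^{-1}\bigl(\hat\Phi_A(\psi(Y))\bigr)$ by substituting $\psi(Y) = T^\ast Y T$ and simplifying. The key identity to exploit is that $T^\ast Y T \cdot A + I = T^\ast(Y\,TAT^\ast + I)(T^\ast)^{-1}$ in the linear case, so that inverting and multiplying through produces exactly the factor $(Y\,TAT^\ast + I)^{-1}Y$, which is $\hat\Phi_{TAT^\ast}(Y)$. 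Conjugating by $T^\ast$ on the outside and by $S = T^{-1}$ from $\psi^{-1}$ then collapses the surrounding $T$'s and $T^\ast$'s. One should treat the conjugate-linear case with the mild care that $\psi(X) = T^\ast X T$ still makes sense and remains an order automorphism, and that the algebra goes through identically since only the ring structure of $B(H)$ and invertibility of $T$ are used.

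Second I would address the domains. Since $\Theta_A : W_A \to W_{-A}$ from Lemma \ref{A-A} is the biholomorphic extension and $\hat U_A = S(H) \cap W_A$, the identity $\psi(U_{TAT^\ast}) = U_A$ follows from the observation $T^\ast Y T \in W_A \iff Y \in W_{TAT^\ast}$, which is the invertibility statement packaged in the factorization above, combined with the fact that $\psi$ is a homeomorphism of $S(H)$ mapping $0$ to $0$ and hence carrying the connected component $U_{TAT^\ast} \ni 0$ onto the connected component $U_A \ni 0$. Once the two composites agree on $U_{TAT^\ast}$, the equivalence is proved.

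The main obstacle, such as it is, is purely bookkeeping: keeping straight which conjugating operator is $T$, $T^\ast$, $T^{-1}$, or $(T^\ast)^{-1}$, and ensuring the chosen $\psi$ produces $\Phi_{TAT^\ast}$ rather than $\Phi_{T^\ast A T}$ or some inverse variant. I expect no genuine difficulty beyond verifying the factorization $T^\ast(Y\,TAT^\ast + I) = (T^\ast Y T A + T^\ast)$ and that the two connected components correspond under $\psi$; the conjugate-linear case, while requiring a sentence of justification, introduces no new phenomenon because the proof never differentiates and uses only algebraic relations valid for both linear and conjugate-linear $T$.
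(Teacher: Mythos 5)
Your proposal is correct and follows essentially the same route as the paper's own proof: a direct algebraic verification of the conjugation identity (via the similarity $T^\ast(YTAT^\ast+I)(T^\ast)^{-1}=T^\ast YTA+I$, which is exactly the invertibility equivalence the paper uses to match $\hat U_{TAT^\ast}$ with $\psi^{-1}(\hat U_A)$), followed by the connected-component argument to pass from $\hat U$ to $U$. The only difference is cosmetic: the paper writes $\hat\Phi_A$ in the form $Y(AY+I)^{-1}$ and collapses the factors after applying $\psi^{-1}$, while you factor $(\psi(Y)A+I)^{-1}$ first; both also leave the conjugate-linear case to the same observation that the algebra uses only composition and invertibility.
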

\begin{proof}
Put $B=TAT^\ast$.
Let $X\in S(H)$. 
We have $X\in \hat{U}_{B}$ if and only if $XTAT^\ast+I$ is invertible. 
On the other hand, we have $\psi(X)\in \hat{U}_A$ if and only if $T^\ast XTA +I$ is invertible. 
Hence we obtain $X\in U_{B}$ if and only if $\psi(X)\in U_A$. 
Let $X\in U_{B}$. 
Then 
\[
\Phi_A\circ \psi (X) = T^*XT (AT^*XT +I)^{-1},
\]
and hence
\[
\begin{split}
\psi^{-1}\circ \Phi_A\circ \psi (X) &=(T^*)^{-1}T^*XT (AT^*XT +I)^{-1}T^{-1}\\
&=X(TAT^*X+I)^{-1} = \Phi_{B}(X). 
\end{split}
\]
\end{proof}

Proposition \ref{TAT*} gives examples of mutually equivalent local order isomorphisms. 
In Proposition \ref{UAUB} below, we will show that every pair of mutually equivalent local order isomorphisms arises in that way.

\begin{lemma}\label{AXA}
Let $A\in S(H)$ and $X\in U_A$. 
Then there exists a bounded linear bijection $T\in B(H)$ such that $\hat{\Phi}_X(A) = TAT^*$. 
\end{lemma}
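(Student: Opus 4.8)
The plan is to show that $\hat{\Phi}_X(A)$ is congruent to $A$, i.e.\ of the form $TAT^*$ with $T$ a bounded invertible operator. First I would record that $\hat{\Phi}_X(A) = (AX+I)^{-1}A = A(XA+I)^{-1}$ is well defined and self-adjoint (since $X\in U_A\subset W_A$ forces $AX+I$ to be invertible, equivalently $A\in\hat{U}_X$), so a congruence to $A$ is at least formally plausible. The naive guess $T=(AX+I)^{-1}$ fails: it produces $A(XA+I)^{-2}$ rather than $A(XA+I)^{-1}$, reflecting the fact that a congruence splits the single M\"obius correction factor into two halves. The cure is to take an actual square root of the correction, which only makes sense when that factor is close to the identity; so I would not try to treat $X$ in one step but rather travel from $0$ to $X$ inside $U_A$ in many small steps, exploiting the connectedness of $U_A$ exactly as one uses constancy of inertia along a path in the finite-dimensional picture.

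The heart of the argument is a one-step congruence lemma: \emph{if $W,Z\in S(H)$ satisfy $\|WZ\|<1$, then $W\in\hat{U}_Z$ and $\hat{\Phi}_Z(W)=TWT^*$ with $T=(I+WZ)^{-1/2}$ bounded invertible}. Here $(I+WZ)^{-1/2}$ is defined by the norm-convergent binomial series $\sum_{k\ge0}\binom{-1/2}{k}(WZ)^k$, legitimate because $\|WZ\|<1$, and its adjoint is $(I+ZW)^{-1/2}$ since $(WZ)^*=ZW$. The key computational fact is the intertwining identity $W(ZW)^k=(WZ)^kW$ for all $k$ (immediate by induction from $W(ZW)=(WZ)W$), which passes to the series and gives $W(I+ZW)^{-1/2}=(I+WZ)^{-1/2}W$. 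Hence
\[
TWT^*=(I+WZ)^{-1/2}\,W\,(I+ZW)^{-1/2}=(I+WZ)^{-1/2}(I+WZ)^{-1/2}\,W=(I+WZ)^{-1}W=\hat{\Phi}_Z(W),
\]
which is the assertion. I expect this lemma to be the main obstacle, in the sense that seeing that a \emph{symmetric} square root of $I+WZ$ (rather than the full inverse) is what realizes the congruence is the one genuinely non-obvious point; everything after it is bookkeeping.

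To globalize, since $U_A$ is an open connected subset of the real Banach space $S(H)$ it is polygonally connected, so I would choose a continuous path $t\mapsto X_t$, $t\in[0,1]$, in $U_A$ with $X_0=0$ and $X_1=X$, and set $B_t:=\hat{\Phi}_{X_t}(A)$, a continuous $S(H)$-valued path with $B_0=\hat{\Phi}_0(A)=A$ and $B_1=\hat{\Phi}_X(A)$. Let $M:=\sup_{t}\|B_t\|<\infty$ by compactness of $[0,1]$, and use uniform continuity of $t\mapsto X_t$ to choose a partition $0=t_0<\cdots<t_n=1$ with $\|X_{t_{k+1}}-X_{t_k}\|<1/M$ for every $k$. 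A direct resolvent computation gives the semigroup law $\hat{\Phi}_{Z'}\circ\hat{\Phi}_Z=\hat{\Phi}_{Z+Z'}$ wherever the operators involved are invertible; applying it with $Z=X_{t_k}$ and $Z'=\Delta_k:=X_{t_{k+1}}-X_{t_k}$ yields $B_{t_{k+1}}=\hat{\Phi}_{\Delta_k}(B_{t_k})$. Since $\|B_{t_k}\Delta_k\|\le M\|\Delta_k\|<1$, the one-step lemma (with $W=B_{t_k}$, $Z=\Delta_k$) furnishes a bounded invertible $T_k$ with $B_{t_{k+1}}=T_kB_{t_k}T_k^*$. Composing, $B_1=(T_{n-1}\cdots T_0)\,A\,(T_{n-1}\cdots T_0)^*$, so $T:=T_{n-1}\cdots T_0$ is the desired bounded linear bijection with $\hat{\Phi}_X(A)=TAT^*$.

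A few routine points I would verify en route: that $B_t$ is genuinely defined for all $t$ (which holds because $X_t\in U_A\subset W_A$, so $X_tA+I$ is invertible), that the intermediate inverses occurring in the semigroup computation exist on the chosen partition (guaranteed by $\|B_{t_k}\Delta_k\|<1$), and that the binomial-series square root is indeed invertible with the claimed adjoint. None of these presents difficulty. The conceptual content is entirely in the local congruence lemma together with the use of connectedness of $U_A$ to license the step-by-step square-root construction.
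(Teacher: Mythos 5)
Your proof is correct and takes essentially the same route as the paper's: both walk along a path from $0$ to $X$ inside $U_A$, subdivide it so that each transition operator is close to the identity, and realize each small step as a congruence by a principal square root of that transition operator — your $T_k=(I+B_{t_k}\Delta_k)^{-1/2}$ is exactly the adjoint of the paper's $S=f^{-1}\left((\tau(t_{j-1})A+I)(\tau(t_j)A+I)^{-1}\right)$, so the per-step congruence is literally the same. The only differences are cosmetic: you construct the square root by a binomial series and verify the intertwining algebraically via $W(ZW)^k=(WZ)^kW$, whereas the paper uses holomorphic functional calculus for $f(z)=z^2$ together with the self-adjointness relation.
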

\begin{proof}
Since $0, X\in U_A$ and $U_A$ is path-connected, there exists a continuous path $\tau: [0, 1]\to U_A$ such that $\tau(0)=0$ and $\tau(1)=X$. 
We may take $0= t_0<t_1<\ldots<t_k=1$ such that $\lVert (\tau(t_{j-1})A+I)(\tau(t_j)A+I)^{-1} -I\rVert$ is sufficiently small for $j=1, \ldots, k$. 
It suffices to show that there exists a bounded linear bijection $T_j: H\to H$ such that $A(\tau(t_j)A+I)^{-1} = T_jA(\tau(t_{j-1})A+I)^{-1}T_j^*$, $j\in \{1, \ldots, k\}$. 
For this, consider the biholomorphic function $f: \{z\in \C \,:\, \operatorname{Re}z>0\}\to \C\setminus (-\infty, 0]$ defined by $f(z)=z^2$. 
Then the equations $f^{-1}(z) = \overline{f^{-1}(\bar{z})}$ and 
$$
A(\tau(t_{j-1})A+I)^{-1} \left((\tau(t_{j-1})A+I)(\tau(t_j)A+I)^{-1}\right) $$ $$=
\left((\tau(t_{j-1})A+I)(\tau(t_j)A+I)^{-1}\right)^* A(\tau(t_{j-1})A+I)^{-1}
$$
imply that 
\[
\begin{split}
&A(\tau(t_{j})A+I)^{-1}\\
&= A(\tau(t_{j-1})A+I)^{-1} \left(f^{-1}\left((\tau(t_{j-1})A+I)(\tau(t_j)A+I)^{-1}\right)\right)^{2}\\
&= S^* A(\tau(t_{j-1})A+I)^{-1} S,
\end{split}
\]
where $S =  f^{-1}\left( (\tau(t_{j-1})A+I)(\tau(t_j)A+I)^{-1}\right)$.
\end{proof}

\begin{proposition}\label{formula}
Let $X_0, A\in S(H)$ satisfy $X_0\in \hat{U}_A$ (or equivalently, $A\in \hat{U}_{X_0}$) and put $B=\hat{\Phi}_{X_0}(A)$. 
Then, for $X\in S(H)$, we have $X+X_0\in \hat{U}_A$ if and only if $X\in \hat{U}_{B}$. 
Moreover, if these equivalent conditions hold, then 
\[
\hat{\Phi}_A(X+X_0) = (X_0A+I)^{-1} \hat{\Phi}_{B}(X)(AX_0+I)^{-1} + \hat{\Phi}_A(X_0).
\] 
\end{proposition}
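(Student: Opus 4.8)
The plan is to collapse the whole statement into one noncommutative algebraic identity after fixing convenient abbreviations. Write $M = X_0 A + I$ and $N = A X_0 + I$; since $A,X_0\in S(H)$ we have $N = M^\ast$, and the hypothesis $X_0\in\hat{U}_A$ says exactly that $M$ (equivalently $N$) is invertible. I would first record the two facts that will drive everything: from the two equal expressions for $\hat{\Phi}_{X_0}$ one gets $B=\hat{\Phi}_{X_0}(A)=N^{-1}A=AM^{-1}$, and $B\in S(H)$ because $\hat{\Phi}_{X_0}$ takes self-adjoint values, so that $\hat{U}_B$ and $\hat{\Phi}_B$ are meaningful; likewise $\hat{\Phi}_A(X_0)=M^{-1}X_0$.

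For the equivalence of conditions I would expand $(X+X_0)A+I = XA+M$ and factor it, using $B=AM^{-1}$, as $(XB+I)M$. Because $M$ is invertible, $XA+M$ is invertible if and only if $XB+I$ is, which is precisely the assertion that $X+X_0\in\hat{U}_A$ iff $X\in\hat{U}_B$. The same factorization gives $((X+X_0)A+I)^{-1}=M^{-1}(XB+I)^{-1}$ at once, and hence $\hat{\Phi}_A(X+X_0)=M^{-1}(XB+I)^{-1}(X+X_0)$.

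It then remains to match this with $M^{-1}\hat{\Phi}_B(X)N^{-1}+\hat{\Phi}_A(X_0)=M^{-1}(XB+I)^{-1}XN^{-1}+M^{-1}X_0$. Cancelling $M^{-1}$ on the left and then multiplying through on the left by $XB+I$ (legitimate since $X\in\hat{U}_B$), the desired equality reduces to $X+X_0 = XN^{-1}+(XB+I)X_0$, i.e.\ to $X = X(N^{-1}+BX_0)$, and so to the single operator identity $N^{-1}+BX_0=I$. I would finish by noting this holds because, from $N=AX_0+I$, we have $BX_0=N^{-1}AX_0=N^{-1}(N-I)=I-N^{-1}$.

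No step is a genuine obstacle; the proposition is an elementary, if noncommutative, manipulation, and the only real care needed is in keeping the left/right placement of factors straight — exploiting both $B=N^{-1}A=AM^{-1}$ and both forms $\hat{\Phi}_A(X_0)=M^{-1}X_0=X_0N^{-1}$ at the right moments — and in checking invertibility before inverting. As an organizing principle one may read the identity as the statement that $\Theta_A$ precomposed with the translation $X\mapsto X+X_0$ equals a congruence $Y\mapsto M^{-1}Y(M^{-1})^\ast$ applied to $\Theta_B$ and then translated by $\hat{\Phi}_A(X_0)$; this corresponds to the block factorization
\[
\begin{pmatrix} I & 0 \\ A & I \end{pmatrix}\begin{pmatrix} I & X_0 \\ 0 & I \end{pmatrix} = \begin{pmatrix} I & M^{-1}X_0 \\ 0 & I \end{pmatrix}\begin{pmatrix} M^{-1} & 0 \\ 0 & N \end{pmatrix}\begin{pmatrix} I & 0 \\ B & I \end{pmatrix},
\]
and the computation above is nothing but this factorization read entrywise.
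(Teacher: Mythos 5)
Your proof is correct and follows essentially the same route as the paper: both arguments rest on the same key factorization $(X+X_0)A+I=(XB+I)(X_0A+I)$ (the paper writes it as $XB+I=\bigl((X+X_0)A+I\bigr)(X_0A+I)^{-1}$) and both verify the displayed formula by direct noncommutative algebra --- the paper by expanding the right-hand side until it becomes $\hat{\Phi}_A(X+X_0)$, you by cancelling invertible factors down to the one-line identity $N^{-1}+BX_0=I$. The closing block-matrix factorization is a pleasant conceptual gloss (and it is correct), but it is not needed and does not change the substance of the argument.
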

\begin{proof}
The condition $X+X_0\in \hat{U}_A$ means $(X+X_0)A+I$ is invertible. 
The condition $X\in \hat{U}_{B}$ means 
$$
XB+I=XA(X_0A+I)^{-1} +I = (XA+X_0A+I)(X_0A+I)^{-1}
$$ 
is invertible. 
Hence they are equivalent.  
Suppose that these equivalent conditions hold. 
Then we obtain
\[
\begin{split}
&(X_0A+I)^{-1}\hat{\Phi}_{B}(X)(AX_0+I)^{-1} + \hat{\Phi}_A(X_0)\\
&= (X_0A+I)^{-1}\left(X(BX+I)^{-1}\right)(AX_0+I)^{-1} + (X_0A+I)^{-1}X_0\\
&= (X_0A+I)^{-1}\left(X((AX_0+I)^{-1}AX+I)^{-1}\right)(AX_0+I)^{-1} + (X_0A+I)^{-1}X_0\\
&= (X_0A+I)^{-1}X(AX+AX_0+I)^{-1} + (X_0A+I)^{-1}X_0\\
&= (X_0A+I)^{-1}(X+X_0AX+X_0AX_0+X_0) (AX+AX_0+I)^{-1}\\
&= (X+X_0)(A(X+X_0)+I)^{-1} = \hat{\Phi}_A(X+X_0).
\end{split}
\]
\end{proof}

If in the above proposition we replace the assumption $X_0\in \hat{U}_A$ by a stronger one $X_0\in U_A$, then using the fact that $X \mapsto X+X_0$ is a homeomorphism of $\hat{U}_B$ onto $\hat{U}_A$
we conclude that  $X+X_0\in U_A$ if and only if $X\in U_{B}$ and in this case we have
\[
{\Phi}_A(X+X_0) = (X_0A+I)^{-1} {\Phi}_{B}(X)(AX_0+I)^{-1} + {\Phi}_A(X_0).
\]

\begin{proposition}\label{UAUB}
Let $A, B\in S(H)$. 
Suppose that $U\subset U_A$, $V\subset U_B$ are operator domains. 
Let $\phi$ (resp.\ $\psi$) denote the restriction of $\Phi_A$ to $U$ (resp.\ $\Phi_B$ to $V$). 
If $\phi$ is equivalent to $\psi$, then there exists a bounded linear or conjugate-linear bijection $T\in B(H)$ such that $B=TAT^\ast$. 
\end{proposition}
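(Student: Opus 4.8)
The plan is to establish the converse of Proposition~\ref{TAT*}, namely that the congruence class of $A$ (under $A\mapsto TAT^*$, with $T$ a linear or conjugate-linear bijection) is a complete invariant of the equivalence class of $\Phi_A$. By the definition of equivalence there are order automorphisms $\psi_1,\psi_2$ of $S(H)$ with $\psi=\psi_2\circ\phi\circ\psi_1$; I would rewrite this as $\psi_2^{-1}\circ\Phi_B=\Phi_A\circ\psi_1$ on (a shrinking of) $V$, and invoke Theorem~\ref{molnar}, by which each $\psi_i$ has the form $X\mapsto S_iXS_i^*+C_i$ or $X\mapsto S_iX^tS_i^*+C_i$ with $S_i$ a bounded linear bijection and $C_i\in S(H)$. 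I would carry out the purely linear case (both $\psi_i$ without transpose) in detail and reduce the remaining cases to it.

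For the linear case, the idea is to expand both sides about a common base point and push every ingredient of $\psi_1,\psi_2$ to the \emph{outside}, so that what is left acting on the variable is a congruence applied to some $\Phi_{A'}$ with $A'$ congruent to $A$. Concretely, fix $X_0\in V$ and put $Y_0:=\psi_1(X_0)\in U\subset U_A$; since $\psi_1(X_0+\Xi)=S_1\Xi S_1^*+Y_0$, the translation has been recentred at the good point $Y_0\in U_A$. Applying the $U_A$-version of Proposition~\ref{formula} with base point $Y_0$ rewrites $\Phi_A(\,\cdot\,+Y_0)$ as a congruence and a constant applied to $\Phi_{B_1}$, where $B_1=\hat\Phi_{Y_0}(A)$ is congruent to $A$ by Lemma~\ref{AXA} (both steps are licensed by $Y_0\in U_A$); Proposition~\ref{TAT*} then absorbs the congruence $S_1$, replacing $\Phi_{B_1}$ by $\Phi_{A_3}$ with $A_3=S_1^*B_1S_1$, still congruent to $A$. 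Expanding the left side in the same way about $X_0\in U_B$ (Proposition~\ref{formula} and Lemma~\ref{AXA} now give $B'=\hat\Phi_{X_0}(B)$ congruent to $B$) and applying the affine map $\psi_2^{-1}$, one reaches, for $\Xi$ near $0$, an identity of the form
\[
\Lambda + R\,\hat\Phi_{B'}(\Xi)\,R^* \;=\; \Lambda' + R'\,\hat\Phi_{A_3}(\Xi)\,(R')^*,
\]
with constants $\Lambda,\Lambda'\in S(H)$ and invertible $R,R'$.

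Evaluating at $\Xi=0$, where $\hat\Phi_{B'}(0)=\hat\Phi_{A_3}(0)=0$, forces $\Lambda=\Lambda'$ and leaves the pure-congruence identity $R\hat\Phi_{B'}(\Xi)R^*=R'\hat\Phi_{A_3}(\Xi)(R')^*$ on a neighbourhood of $0$. Proposition~\ref{AA'}(1) now yields $B'=A_3$, so $B$ is congruent to $B'=A_3$, which is congruent to $A$; composing the congruences gives $B=TAT^*$ with $T$ linear. For the transpose cases I would use the elementary identity $\Theta_A(X)^t=\Theta_{A^t}(X^t)$ to move every transpose onto the variable. If exactly one of $\psi_1,\psi_2$ is of transpose type, the same reduction lands on $R\hat\Phi_{B'}(\Xi)R^*=R'\hat\Phi_{A_3}(\Xi^t)(R')^*$, which Proposition~\ref{AA'}(2) forbids, so such a configuration cannot realise the equivalence. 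If both are of transpose type, the two transposes cancel and Proposition~\ref{AA'}(1) gives $B'=A_3^t$; hence $B$ is congruent to $A^t$, and since $A^t=KAK$ with $K$ conjugate-linear this produces $B=TAT^*$ with $T$ conjugate-linear.

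The main obstacle is the middle reduction: organising the two-sided composition into the single pure-congruence identity that Proposition~\ref{AA'} can digest. The delicate point is the translation part of $\psi_1$, because Proposition~\ref{formula} and Lemma~\ref{AXA} only keep $A$ inside its congruence class when the shift lies in $U_A$. This is precisely why the computation must be centred at a point $X_0\in V$ and shifted by $Y_0=\psi_1(X_0)\in U\subset U_A$, rather than by the raw constant $C_1$, which need not belong to $U_A$. Once the constants are correctly positioned, the remaining work is routine algebra, and the transpose cases amount to a mechanical parity check governed by Proposition~\ref{AA'}.
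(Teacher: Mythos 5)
Your proof is correct, and it runs on the same machinery as the paper's: Theorem~\ref{molnar} for the affine form of $\psi_1,\psi_2$, Proposition~\ref{formula} to deal with the translation, Proposition~\ref{TAT*} to absorb the linear congruence, Proposition~\ref{AA'} to identify parameters, and Lemma~\ref{AXA} to turn $\hat\Phi_{X_0}(\,\cdot\,)$ into a congruence. The genuine difference is structural. The paper's first move is to globalize: by the uniqueness of maximal extensions (Proposition~\ref{rdrit}), the relation $\psi=\psi_2\circ\phi\circ\psi_1$ upgrades to $\Phi_B=\psi_2\circ\Phi_A\circ\psi_1$ as an identity of maximal local order isomorphisms, whence $\psi_1(U_B)=U_A$ and in particular $C_1=\psi_1(0)\in U_A$. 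So the obstacle you isolated --- that the raw constant $C_1$ need not lie in $U_A$ --- disappears after globalization, and the paper then expands only the right-hand side around $C_1$, keeping the left-hand side as $\Phi_B$ itself; Proposition~\ref{AA'} is applied with the identity operator as one of the congruences, and Lemma~\ref{AXA} is needed only once. You stay local instead: recentring at $Y_0=\psi_1(X_0)\in U\subset U_A$ is a valid substitute, but it forces you to expand both sides (Proposition~\ref{formula} at $X_0\in U_B$ as well as at $Y_0$), costing an extra application of Lemma~\ref{AXA} for $B'=\hat\Phi_{X_0}(B)$ and a longer chain of congruences at the end. The trade-off: your argument is independent of Proposition~\ref{rdrit} and the maximality apparatus, while the paper's globalization yields a shorter, one-sided computation. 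Finally, your explicit transpose parity check is the unpacked version of what the paper compresses into a single citation of Proposition~\ref{AA'}: the paper writes $\psi_i(X)=T_iXT_i^*+C_i$ with $T_i$ linear or conjugate-linear, the mixed-parity case being excluded by part (2) of that proposition exactly as in your argument, so both proofs use the two parts of Proposition~\ref{AA'} in the same way.
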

\begin{proof}
There exist order automorphisms $\psi_1, \psi_2: S(H)\to S(H)$ such that $\psi = \psi_2\circ\phi\circ\psi_1$. 
Considering extensions to maximal local order isomorphisms of both sides, we obtain $\Phi_B = \psi_2\circ\Phi_A\circ\psi_1$. 
In particular, for $X\in S(H)$, we have $X\in U_B$ if and only if $\psi_1(X)\in U_A$. 

By Theorem \ref{molnar}, there exist bounded linear or conjugate-linear bijections $T_1, T_2: H\to H$ and $C_1, C_2\in S(H)$ such that  
$$
\psi_1(X)= T_1XT_1^* +C_1,\quad\psi_2(X) = T_2XT_2^* + C_2,\quad X\in S(H).
$$
Put $D:= \hat{\Phi}_{C_1}(A) = A(C_1A+I)^{-1}$. 
Let $X \in  U_B$. By Lemma \ref{formula}, we have
\[
\begin{split}
\Phi_B(X)&= \psi_2\circ\Phi_A\circ\psi_1(X) \\
&= \psi_2\circ\Phi_A(T_1XT_1^* +C_1) \\
&= \psi_2\left((C_1A+I)^{-1}\Phi_D(T_1XT_1^*)(AC_1+I)^{-1} +\Phi_A(C_1)\right) \\
&= T_2(C_1A+I)^{-1}\Phi_D(T_1XT_1^*)(AC_1+I)^{-1}T_2^* +T_2\Phi_A(C_1)T_2^* + C_2.
\end{split}
\]
Since $\Phi_{B}(0)=0$, we obtain $T_2\Phi_A(C_1)T_2^* + C_2=0$. 
By Lemma \ref{TAT*}, we obtain 
\[
\begin{split}
\Phi_{B} (X) &= T_2(C_1A+I)^{-1}\Phi_D(T_1XT_1^*)(AC_1+I)^{-1}T_2^* \\
&= T_2(C_1A+I)^{-1}T_1\Phi_{T_1^*DT_1}(X)T_1^*(AC_1+I)^{-1}T_2^*.
\end{split}
\]
Proposition \ref{AA'} implies $B=T_1^*DT_1=T_1^*A(C_1A+I)^{-1}T_1$. 
Since $C_1=\psi_1(0)\in U_A$, Lemma \ref{AXA} implies that there exists a bounded linear or conjugate-linear bijection $T\in B(H)$ with $B=TAT^*$. 
\end{proof}

As a consequence of Propositions \ref{TAT*} and \ref{UAUB}, we obtain
\begin{corollary}\label{number}
Let $A, B\in S(H)$. 
Two maximal order isomorphisms $\Phi_A$ and $\Phi_B$ are equivalent if and only if
there exists  a bounded linear or conjugate-linear bijection $T: H\to H$ such that $B=TAT^*$. 
If $2\le n=\dim H<\infty$, then the number of equivalence classes of maximal local order isomorphisms is $(n+2)(n+1)/2$.  
\end{corollary}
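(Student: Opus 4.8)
The plan is to separate the statement into the equivalence criterion (valid for arbitrary $H$) and the finite-dimensional counting formula, and to treat each in turn.

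The ``if and only if'' is essentially a packaging of the two preceding propositions. First I would note that one direction is exactly Proposition \ref{TAT*}: if $B=TAT^\ast$ for some bounded linear or conjugate-linear bijection $T$, then $\Phi_A$ and $\Phi_B$ are equivalent. For the converse I would apply Proposition \ref{UAUB} to the full domains, taking $U=U_A$ and $V=U_B$ so that $\phi=\Phi_A$ and $\psi=\Phi_B$; equivalence of $\Phi_A$ and $\Phi_B$ then produces a bounded linear or conjugate-linear bijection $T$ with $B=TAT^\ast$. This disposes of the first assertion.

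For the counting formula I would reduce everything to an orbit count. By the remark following Proposition \ref{rdrit}, every maximal local order isomorphism is equivalent to $\Phi_A$ for some $A\in S_n$, and each $\Phi_A$ is itself maximal (by Lemma \ref{diverge}). Hence $A\mapsto [\Phi_A]$ surjects onto the set of equivalence classes of maximal local order isomorphisms, and by the first part of the corollary it descends to a \emph{bijection} between this set of classes and the set of orbits of $S_n$ under the action $A\mapsto TAT^\ast$ of the group $G$ of all bounded linear and conjugate-linear bijections of $H=\C^n$. The task is thus to count $G$-orbits on $S_n$.

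To identify the orbits I would invoke Sylvester's law of inertia. For linear $T\in GL_n(\C)$ the map $A\mapsto TAT^\ast$ is $\ast$-congruence, whose orbits on $S_n$ are classified completely by the inertia triple $(n_+,n_-,n_0)$ of numbers of positive, negative, and zero eigenvalues, subject to $n_++n_-+n_0=n$. The one genuinely substantive point, which I expect to be the main obstacle, is checking that allowing conjugate-linear $T$ merges no distinct congruence classes. Here I would write any conjugate-linear bijection as $T=SK$ with $S$ linear and $K$ the fixed conjugation, and use the identity $KAK=A^t$ for $A\in S_n$ (a consequence of $X^t=KX^\ast K$ together with $A^\ast=A$) to obtain $TAT^\ast=SA^tS^\ast$. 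Since $A^t$ shares the characteristic polynomial, hence the inertia, of $A$, conjugate-linear congruence preserves inertia as well; consequently the $G$-orbits coincide exactly with the inertia classes. Finally, the number of triples $(n_+,n_-,n_0)$ of nonnegative integers with sum $n$ is $\binom{n+2}{2}=(n+2)(n+1)/2$, which is the asserted count.
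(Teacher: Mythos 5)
Your proposal is correct and follows essentially the same route as the paper: the equivalence criterion is obtained by combining Propositions \ref{TAT*} and \ref{UAUB}, and the count uses Proposition \ref{rdrit} together with Sylvester's law of inertia, after observing that conjugate-linear congruence reduces to the transpose (which preserves inertia) and hence merges no classes. Your reduction $T=SK$ with $TAT^\ast=SA^tS^\ast$ is just a rephrasing of the paper's remark that $B$ and $B^t$ are unitarily similar, so nothing essentially new is added or missing.
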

\begin{proof}
The former half is clear. 
In the matrix language it can be reformulated as follows: 
 $\Phi_A$ and $\Phi_B$ are  equivalent if and only if there exists an invertible $n \times n$ complex matrix $T$ such that $A = TBT^\ast$ or $A = TB^t T^\ast$. Since hermitian matrices $B$ and $B^t$ have the same eigenvalues they are unitarily similar.
Consequently, $\Phi_A$ and $\Phi_B$ are equivalent if and only if there exists an invertible $n \times n$ complex matrix $T$ such that $A = TBT^\ast$. By the Sylvester's law of inertia this is equivalent to the condition that the number of positive eigenvalues of $A$ and $B$ coincide and the same is true for the number of negative eigenvalues (here, each eigenvalue is counted with its multiplicity). 
By Proposition \ref{rdrit}, the number of equivalence classes of maximal order isomorphisms of matrix domains in $S_n$ is $(n+2)(n+1)/2$.
\end{proof}

It is clear that an order isomorphism between two operator domains is a local order isomorphism. 
The converse does not hold. 
However, we know that the maximal local order isomorphism $X\mapsto -X^{-1}$, from the operator interval $(0, \infty)$ onto $(-\infty, 0)$, is an order isomorphism.  
In  the second half of this section, we consider the following question: 
When is a local order isomorphism an order isomorphism?
By Proposition \ref{rdrit}, it suffices to consider a restriction of the map $\Phi_A: {U}_A\to {U}_{-A}$ to an operator domain $U\subset U_A$. 

\begin{proposition}\label{interval}
Let $A\in {S}(H)$. 
Suppose that $X\in U_A$ satisfies $X\ge 0$. 
Then the condition $[0, X]\subset {{U}}_A$ is equivalent to $X^{1/2}AX^{1/2}\ge -I$. 
Moreover, if these equivalent conditions hold, then $\Phi_A$ restricts to an order isomorphism from $[0, X]$ onto $[0, \Phi_A(X)]$.
\end{proposition}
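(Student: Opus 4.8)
The plan is to convert the containment $[0,X]\subset U_A$ into a spectral condition by means of the identity $\sigma(YA)\setminus\{0\}=\sigma(Y^{1/2}AY^{1/2})\setminus\{0\}$, valid for $Y\ge 0$, and then to deduce the order-isomorphism claim from the segment monotonicity of Theorem \ref{1} together with the first part of the proposition applied to $-A$. First I would observe that $[0,X]$ is convex and contains $0\in U_A$, so $[0,X]\subset U_A$ is the same as $[0,X]\subset\hat{U}_A$, i.e. invertibility of $YA+I$ for every $Y\in[0,X]$. Writing $M:=X^{1/2}AX^{1/2}$, invertibility of $YA+I$ is equivalent to $-1\notin\sigma(Y^{1/2}AY^{1/2})$. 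For the forward implication it suffices to test the segment $Y=tX$, $t\in[0,1]$: there $\sigma(t\,X^{1/2}AX^{1/2})=t\,\sigma(M)$, so requiring $-1\notin\sigma(tXA)$ for all $t\in(0,1]$ forces $\sigma(M)\cap(-\infty,-1]=\emptyset$, hence $M>-I$ and in particular $M\ge -I$.

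For the reverse implication I would combine the assumption $M\ge -I$ with the standing hypothesis $X\in U_A$ (so $-1\notin\sigma(M)$) to upgrade the inequality to $M\ge(-1+\varepsilon)I$ for some $\varepsilon>0$, and then first treat invertible $X$. In that case every $Y\in[0,X]$ is $Y=X^{1/2}ZX^{1/2}$ with $Z:=X^{-1/2}YX^{-1/2}\in[0,I]$, and cyclicity of the nonzero spectrum reduces invertibility of $YA+I$ to $-1\notin\sigma(Z^{1/2}MZ^{1/2})$; the estimate $Z^{1/2}MZ^{1/2}\ge(-1+\varepsilon)Z\ge(-1+\varepsilon)I>-I$ then finishes this case. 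For general $X\ge 0$ I would replace $X$ by the invertible operator $X+\delta I\ge X$ and use norm-continuity of the operator square root to keep $(X+\delta I)^{1/2}A(X+\delta I)^{1/2}>-I$ for small $\delta>0$, so that $[0,X]\subset[0,X+\delta I]\subset\hat{U}_A$.

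For the order-isomorphism statement I would invoke Theorem \ref{TPhi}: $\Theta_A$ is holomorphic on $W_A$, maps $\Pi(H)$ into itself, and carries $U_A$ into $S(H)$, so Theorem \ref{1} yields that $\Phi_A$ is monotone along any line segment contained in $U_A$. Since $[0,X]$ is convex and lies in $U_A$, this makes $\Phi_A$ order preserving on $[0,X]$, whence $\Phi_A([0,X])\subset[0,\Phi_A(X)]$. To obtain surjectivity and the reverse order implication I would apply the already proved first part to the pair $(-A,\Phi_A(X))$: writing $P:=\Phi_A(X)\ge 0$, a short computation gives $PA=XA(XA+I)^{-1}$, so the spectral mapping $\lambda\mapsto\lambda/(\lambda+1)$ sends $\sigma(M)\subset(-1,\infty)$ into $(-\infty,1)$, i.e. $\sigma(P^{1/2}AP^{1/2})\subset(-\infty,1)$ and thus $P^{1/2}(-A)P^{1/2}\ge -I$. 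Hence $[0,\Phi_A(X)]\subset U_{-A}$, and Theorem \ref{1} applied to $\Phi_{-A}=\Theta_{-A}|_{U_{-A}}$ on the convex set $[0,\Phi_A(X)]$ shows that $\Phi_{-A}=\Phi_A^{-1}$ is order preserving and maps $[0,\Phi_A(X)]$ into $[0,X]$; this gives the equality $\Phi_A([0,X])=[0,\Phi_A(X)]$ and that both $\Phi_A$ and its inverse preserve order, i.e. an order isomorphism.

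The hard part will be the passage from the scalar-looking condition $M\ge -I$ to the uniform invertibility of $YA+I$ for all $Y\in[0,X]$, the difficulty being that $Y^{1/2}AY^{1/2}$ is not order-comparable to $M$ when $A$ is indefinite. The device that makes it work is the reduction to invertible $X$ followed by the conjugation estimate $Z^{1/2}MZ^{1/2}\ge(-1+\varepsilon)I$ with $Z\in[0,I]$, the non-invertible case being recovered by the harmless perturbation $X\rightsquigarrow X+\delta I$.
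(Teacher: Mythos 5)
Your proposal is correct, and its skeleton largely matches the paper's: the forward implication by testing on the segment $\{tX \,:\, t\in[0,1]\}$, and the order-isomorphism claim via the spectral mapping $z\mapsto z/(z+1)$ applied to $\sigma(XA)$, the already-proved first part applied to the pair $(-A,\Phi_A(X))$, and Theorem \ref{1} used for both $\Phi_A$ and $\Phi_{-A}=\Phi_A^{-1}$ — this is exactly what the paper does. Where you genuinely deviate is the reverse implication of the first claim. The paper handles it in one stroke with Douglas' lemma: for $Y\in[0,X]$ there is a contraction $T$ with $Y^{1/2}=TX^{1/2}=X^{1/2}T^\ast$, so $Y^{1/2}AY^{1/2}=TX^{1/2}AX^{1/2}T^\ast>-I$ at once, with no invertibility hypothesis on $X$ and no limiting argument. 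You avoid Douglas' lemma: assuming $X$ invertible you write $Y=X^{1/2}ZX^{1/2}$ with $Z=X^{-1/2}YX^{-1/2}\in[0,I]$, pass to $Z^{1/2}MZ^{1/2}$ by cyclicity of the nonzero spectrum, and estimate $Z^{1/2}MZ^{1/2}\ge(-1+\varepsilon)Z\ge(-1+\varepsilon)I$; the general case follows by perturbing to $X+\delta I$, where norm continuity of the square root preserves the spectral gap $M\ge(-1+\varepsilon)I$ (which you correctly extract from $M\ge-I$ together with $-1\notin\sigma(M)$, a consequence of $X\in U_A$). The two arguments rest on the same core estimate — your $Z^{1/2}$ is precisely a positive contraction playing the role of the paper's $T$ — so the trade-off is: the paper's factorization is shorter and uniform in $Y$, at the price of invoking Douglas' lemma; yours is more elementary (only functional calculus, cyclicity, and congruence monotonicity) but needs the invertible-case reduction plus a perturbation step.
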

\begin{proof}
Since $X\in U_A$, we have $-1\notin\sigma(XA)\setminus\{0\} = \sigma(X^{1/2}AX^{1/2})\setminus\{0\}$.
If $X^{1/2}AX^{1/2}\ngeq -I$, then there exists a positive real number $c$ such that $0<c< 1$ and $-1\in\sigma((cX)^{1/2}A(cX)^{1/2})$. 
We know that $-1\in\sigma((cX)^{1/2}A(cX)^{1/2})$ if and only if $cXA+I$ is non-invertible, thus $cX\notin {{U}}_A$. 
If $X^{1/2}AX^{1/2}\ge -I$, then we actually have $X^{1/2}AX^{1/2}> -I$. 
It follows that $-I< TX^{1/2}AX^{1/2}T^*$ for any contractive operator $T$ on $H$,  i.e., for any $T\in B(H)$ with $\|T\|\le 1$. 
Indeed, there exists a positive real number $0<\varepsilon\le 1$ such that 
$$
\langle TX^{1/2}AX^{1/2}T^* x, x\rangle = \langle X^{1/2}AX^{1/2}(T^*x), T^*x\rangle \geq -(1-\varepsilon) \lVert T^* x\rVert^2\geq -(1-\varepsilon)
$$
for any unit vector $x\in H$.
If $Y\in [0, X]$, then by Douglas' lemma there exists a contractive operator $T\in B(H)$ such that $Y^{1/2}= TX^{1/2}= X^{1/2}T^*$. 
We have $-I<TX^{1/2}AX^{1/2}T^* = Y^{1/2}AY^{1/2}$, hence $\sigma(Y^{1/2}AY^{1/2})\cap (-\infty, -1]= \emptyset$, and $AY+I$ is invertible. 

Suppose that $[0, X]\subset {{U}}_A$. 
Theorem \ref{1} implies $\Phi_A(X)\ge \Phi_A(0)=0$.
We have $-\Phi_A(X)A = f(XA)$, where $f$ is a biholomorphic function defined by 
\[
f(z) = -\frac{z}{z+1}
\]
on $\C\setminus \{-1\}$. 
Since $\sigma(XA)\subset\sigma(X^{1/2}AX^{1/2})\cup \{0\}\subset (-1, \infty)$, the spectral mapping theorem implies $\sigma(-\Phi_A(X)A) = f(\sigma(XA)) \subset (-1, \infty)$.
Thus we obtain $[0, \Phi_A(X)]\subset U_{-A}$. 
Since $(\Phi_A)^{-1}=\Phi_{-A}$, Theorem \ref{1} implies that $\Phi_A$ restricts to an order isomorphism from $[0, X]$ onto $[0, \Phi_A(X)]$.
\end{proof}

\begin{proposition}\label{restriction}
Let $U\subset U_A$ be an operator domain. 
Then the map $\Phi_A$ restricts to an order preserving map on $U$ if and only if the following condition holds: 
For any pair $X, Y\in U$ with $X\le Y$, $[X, Y]\subset U_A$ holds. 
\end{proposition}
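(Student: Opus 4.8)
The plan is to establish the two implications separately. The reverse implication (the stated interval condition forces $\Phi_A$ to preserve order) is the routine one and follows directly from Theorem \ref{1}, while the forward implication carries the real content and will be handled by translating to an interval based at the origin and applying Proposition \ref{interval}.

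For the reverse direction, I assume that $[X,Y]\subset U_A$ whenever $X,Y\in U$ satisfy $X\le Y$, and fix such a pair. The straight line segment $\{(1-c)X+cY : c\in[0,1]\}$ is contained in the operator interval $[X,Y]$, hence in $U_A$. I then apply Theorem \ref{1} to the holomorphic map $\Theta_A$ restricted to the connected component $\cU$ of $W_A$ containing $0$: this component contains both $\Pi(H)$ and $U_A$, the map $\Theta_A$ sends $U_A$ into $S(H)$ and $\Pi(H)$ into itself (Theorem \ref{TPhi}), and the line segment joining $X$ to $Y$ lies in the operator domain $U_A$. Theorem \ref{1} then yields $\Phi_A(X)\le\Phi_A(Y)$, so $\Phi_A$ preserves order on $U$.

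For the forward direction I assume $\Phi_A$ is order preserving on $U$ and fix $X\le Y$ in $U$, the goal being $[X,Y]\subset U_A$. The natural move is to recenter the interval at $0$ via Proposition \ref{formula}. With $X_0:=X$ and $B:=\hat{\Phi}_{X_0}(A)$, the remark following Proposition \ref{formula} (applicable since $X\in U_A$) identifies $Z\in U_A$ with $Z-X\in U_B$ and gives $\Phi_A(Y)-\Phi_A(X)=M\,\Phi_B(Y-X)\,M^\ast$ for the invertible operator $M:=(XA+I)^{-1}$; hence order preservation of $(X,Y)$ is equivalent to $\Phi_B(W)\ge 0$, where $W:=Y-X\ge 0$. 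Writing $C:=W^{1/2}BW^{1/2}$, the identity $W^{1/2}(C+I)=(WB+I)W^{1/2}$ (valid because $C+I$ is invertible, as $W\in U_B$) gives $\Phi_B(W)=W^{1/2}(C+I)^{-1}W^{1/2}$. When $W$ is invertible, congruence by $W^{-1/2}$ turns $\Phi_B(W)\ge 0$ into $(C+I)^{-1}\ge 0$, and since $C+I$ is invertible self-adjoint this forces $C\ge -I$, which by Proposition \ref{interval} is precisely the statement $[0,W]\subset U_B$, i.e.\ $[X,Y]\subset U_A$.

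The one obstacle is that $W=Y-X$ need not be invertible, in which case congruence by the non-invertible $W^{1/2}$ loses information and $\Phi_B(W)\ge 0$ no longer forces $C\ge -I$. I would circumvent this by a perturbation. Since $U$ is open, for small $\delta>0$ the operators $X':=X-\delta I$ and $Y':=Y+\delta I$ still lie in $U$, satisfy $X'\le X\le Y\le Y'$, and now $W':=Y'-X'\ge 2\delta I$ is invertible. Applying the invertible case of the previous paragraph to the comparable pair $(X',Y')\in U$ yields $[X',Y']\subset U_A$, and since $[X,Y]\subset[X',Y']$ we conclude $[X,Y]\subset U_A$. This reduction of an arbitrary interval to one with invertible difference is the crux of the argument; everything else is bookkeeping built on Propositions \ref{formula} and \ref{interval}.
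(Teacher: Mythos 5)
Your proof is correct and follows essentially the same route as the paper's: Theorem \ref{1} for the easy direction, and for the converse a perturbation making the difference of the endpoints invertible, translation to the origin via Proposition \ref{formula}, and Proposition \ref{interval} combined with a congruence argument. The paper merely phrases this second half contrapositively (perturbing only $Y$ to $Y+\varepsilon I$, which already makes $Y+\varepsilon I-X$ invertible) and writes the congruence step as $\Phi_A(X)=(X^{-1}+A)^{-1}\ngeq 0$, but the content is identical to yours.
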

\begin{proof}
Theorem \ref{1} gives one direction. 
Suppose that there exist a pair $X, Y\in U$ with $X\le Y$, $[X, Y]\not\subset U_A$.
Take a positive real number $\varepsilon>0$ such that $Y+\varepsilon I\in U$. 
Then trivially we have $[X, Y+\varepsilon I]\not\subset U_A$. 
By Proposition \ref{formula}, it suffices to show: 
If $X\in U_A$, $X>0$ and $[0, X]\not\subset U_A$, then $\Phi_A(X)\ngeq 0$. 
Suppose $X\in U_A$ satisfies $X>0$ and $[0, X]\not\subset U_A$.
By Proposition \ref{interval}, the condition $[0, X]\not\subset U_A$ yields $X^{1/2}AX^{1/2}\ngeq-I$. 
We obtain $X^{-1}+A \ngeq0$, which implies $\Phi_A(X)=(X^{-1}+A)^{-1}\ngeq0$.
\end{proof}

For an operator $A\in {S}(H)$, let $s(A)$ denote the support projection of $A$, namely, $s(A) = \chi_{(-\infty, \infty)\setminus\{0\}}(A)$. 
We also use the notation $A_+ := A\chi_{(0, \infty)}(A)$ and $A_- := -A\chi_{(-\infty, 0)}(A)$, which are positive and negative parts of $A$, respectively. 

The rest of this section aims to prove
\begin{theorem}\label{compact}
Let $A\in {S}(H)$. 
Then the following two conditions are equivalent: 
\begin{enumerate}
\item At least one of the operators $A_+$ and $A_-$ is compact. 
\item The map $\Phi_A$ is an order isomorphism from ${{U}}_A$ onto ${{U}}_{-A}$.
\end{enumerate}
\end{theorem}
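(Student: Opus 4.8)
The plan is to reduce the equivalence to a single spectral statement about positive order intervals, and then to analyse the connectedness of $\hat{U}_A$.

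\emph{First reduction.} Recall that $\Phi_A:U_A\to U_{-A}$ is a bijection with $(\Phi_A)^{-1}=\Phi_{-A}$. Hence $\Phi_A$ is an order isomorphism if and only if both $\Phi_A$ and $\Phi_{-A}$ preserve order. Applying Proposition \ref{restriction} once with $A$ and once with $-A$, this is equivalent to the statement that both $U_A$ and $U_{-A}$ are \emph{order-convex}: whenever $X\le Y$ lie in $U_A$ (resp.\ $U_{-A}$) the whole interval $[X,Y]$ is contained in $U_A$ (resp.\ $U_{-A}$). I note that condition (1) is invariant under $A\mapsto -A$, since $(-A)_\pm=A_\mp$; this matches the symmetry of the reduced statement, so it suffices to show that $U_A$ is order-convex for every $A$ with $A_+$ or $A_-$ compact, and that $U_A$ fails to be order-convex when both $A_+$ and $A_-$ are non-compact.

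\emph{Second reduction.} Using the translation formula of Proposition \ref{formula} together with Lemma \ref{AXA}, every pair $X\le Y$ in $U_A$ is moved to a pair $0\le W:=Y-X$ with $W\in U_B$, where $B=\hat{\Phi}_X(A)$ is congruent to $A$, say $B=SAS^{\ast}$ with $S$ invertible. A short quadratic-form estimate shows that each of the conditions ``$A_+$ compact'' and ``$A_-$ compact'' is invariant under such congruences (if $A\ge-\varepsilon I$ off a finite-dimensional subspace then $SAS^{\ast}\ge-\varepsilon\lVert S\rVert^{2}I$ off a finite-dimensional subspace), so the compactness hypothesis passes from $A$ to $B$. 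By Proposition \ref{interval}, $[0,W]\subset U_B$ is equivalent to $W^{1/2}BW^{1/2}\ge-I$. Thus the theorem follows from the \emph{Key Lemma}: for $B\in S(H)$, the implication ``$W\ge0$ and $W\in U_B$'' $\Rightarrow$ ``$W^{1/2}BW^{1/2}\ge-I$'' holds for all such $W$ if and only if $B_+$ or $B_-$ is compact.

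\emph{Proof of the Key Lemma and the main obstacle.} For the implication from compactness, say $B_-$ is compact, and set $\Lambda=W^{1/2}BW^{1/2}$ for $W\ge0$, $W\in U_B$; since $W^{1/2}B_-W^{1/2}$ is compact, $\Lambda$ is a compact perturbation of $W^{1/2}B_+W^{1/2}\ge0$, so $\sigma_{\mathrm{ess}}(\Lambda)\subset[0,\infty)$ and the part of $\sigma(\Lambda)$ below $-1$ consists of finitely many eigenvalues of finite multiplicity. I would then run a homotopy argument along a path $Z_t$ in $U_B$ from $0$ to $W$: because $B_-$ is compact, $Z_tB$ is a compact perturbation of $Z_tB_+$, whose spectrum is real, so one can follow the finite-rank spectral projection of $Z_tB$ attached to the spectrum to the left of $-1$; this projection is locally constant in rank as long as no spectrum meets $-1$, which never happens on the path, so its rank equals its value $0$ at $Z_0=0$, forcing $\Lambda\ge-I$. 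The case $B_+$ compact is symmetric. For the converse, if both $B_+$ and $B_-$ are non-compact I would produce, from orthonormal eigenvector sequences $Be_n=\beta_ne_n$ with $\beta_n\ge\delta$ and $Bf_n=-\gamma_nf_n$ with $\gamma_n\ge\delta$, a positive $W$ whose straight segment $\{tW\}$ leaves $\hat{U}_B$ (so $W^{1/2}BW^{1/2}\not\ge-I$) together with a genuinely non-straight path from $0$ to $W$ inside $\hat{U}_B$, obtained by switching on rotations between $e_n$ and $f_n$ in successively higher blocks so that the spectral value which would otherwise pass through $-1$ is carried off along the infinite tail and never meets it. This ``push the crossing to infinity'' construction, available precisely because both parts are infinite-dimensional away from $0$, is the heart of the theorem; the main obstacle is to make the homotopy bookkeeping in the compact case rigorous in the presence of non-real spectrum, and to verify that the path in the non-compact case stays in $\hat{U}_B$ for all parameters, both of which require careful uniform spectral estimates.
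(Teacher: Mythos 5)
Your two reductions are sound, and they are in fact exactly the reductions the paper itself makes: Proposition~\ref{restriction} converts the statement into order-convexity of $U_A$ and $U_{-A}$, Proposition~\ref{formula} together with Lemma~\ref{AXA} (plus the congruence-invariance of condition (1)) reduces everything to intervals of the form $[0,W]$, and Proposition~\ref{interval} translates $[0,W]\subset U_B$ into $W^{1/2}BW^{1/2}\ge -I$. The genuine gap is in the compactness direction of your Key Lemma. The principle you invoke --- that the rank of ``the spectral projection of $Z_tB$ attached to the spectrum to the left of $-1$'' is locally constant as long as $-1\notin\sigma(Z_tB)$ --- is false for the non-self-adjoint operators $Z_tB$. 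Two things go wrong. First, for intermediate $t$ the operator $Z_t$ need not be positive, so even with $B_-$ compact the essential spectrum $\sigma_{\mathrm{ess}}(Z_tB)=\sigma_{\mathrm{ess}}(Z_tB_+)$, though real, may contain points on both sides of $-1$; then the spectrum lying left of $-1$ is not a finite system of isolated eigenvalues and carries no finite-rank Riesz projection at all. Second, and more fundamentally, non-real eigenvalues (which occur in conjugate pairs since $\sigma(Z_tB)\setminus\{0\}$ is symmetric about $\mathbb{R}$) can cross any barrier through $-1$ at non-real points: already for $B=\mathrm{diag}(1,-1)\oplus I$, with $B_-$ of rank one, one can exhibit self-adjoint $Z$ with $\sigma(ZB)\ni -1\pm i\sqrt{\lvert b\rvert^2-1}$, so a conjugate pair can sit on, and migrate across, the line $\operatorname{Re}z=-1$ while $-1$ itself stays in the resolvent set. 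Avoidance of the single point $-1$ therefore yields no conserved count. Note that the paper's proof of $(2)\Rightarrow(1)$ exhibits exactly such a migration along a path in $U_A$ when both $A_+$ and $A_-$ are noncompact, so the principle you rely on is not merely unjustified: its failure in general is precisely what the theorem is about. You flag this as ``the main obstacle'', but that obstacle \emph{is} the theorem.

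The idea that blocks the migration, and which your sketch is missing, is the paper's self-adjointization trick: $Z\in\hat{U}_B$ if and only if the \emph{self-adjoint} operator $\lvert B\rvert^{1/2}Z\lvert B\rvert^{1/2}+S$, with $S=s(B_+)-s(B_+)^{\perp}$, is invertible (first paragraph of the proof of Lemma~\ref{finite}). For self-adjoint families the spectrum is confined to $\mathbb{R}$, so invertibility along a connected set does produce a conserved quantity, namely $\rank$ of the positive part whenever it is finite (Lemma~\ref{minmax}); this settles the case $\rank B_+<\infty$ or $\rank B_-<\infty$, and the sandwich statement of Lemma~\ref{minmax}(2) gives order-convexity there. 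The merely-compact case then follows by approximating $B$ by operators $B_n$ with finite-rank positive part, using a compactness argument along the given path to get uniform invertibility of $\tau(s)B_n+I$, and passing to the limit in the inequality $W^{1/2}B_nW^{1/2}\ge -I$ furnished by Proposition~\ref{interval}. Any correct completion of your homotopy argument would need both ingredients, at which point it becomes the paper's proof. A smaller repair is also needed in your noncompact direction: noncompactness of $B_{\pm}$ gives infinite-rank spectral projections $\chi_{[\delta,\infty)}(B)$ and $\chi_{(-\infty,-\delta]}(B)$, not orthonormal sequences of eigenvectors (the spectrum may be purely continuous), so the ``rotation'' construction must be phrased, as in the paper, in terms of unitaries moving the spectral subspace $\chi_{[\delta,\infty)}(B)H$, with the factors $\lvert B\rvert^{-1/2}$ inserted to control the quadratic forms.
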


We first consider the case either $A_+$ or $A_-$ has finite rank. 

\begin{lemma}\label{minmax}
Let $m\in \{0, 1, \ldots\}\cup \{\infty\}$ satisfy $m\leq \dim H$. 
Let ${V}$ denote the collection of all invertible $A\in {S}(H)$ with $\rank A_+=m$. 
\begin{enumerate}
\item The set ${V}$ is open and closed in the collection of all invertible operators in ${S}(H)$.  
\item  Assume $m<\infty$. Let $A_0, A_1\in {V}$ satisfy $A_0\leq A_1$. 
If $A\in [A_0, A_1]$, then $A\in V$. 
\end{enumerate}
\end{lemma}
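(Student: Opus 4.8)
The plan is to handle the two parts separately, relying throughout on the observation that for an invertible $A\in S(H)$ the number $\rank A_+$ equals the dimension of the positive spectral subspace $\Image\chi_{(0,\infty)}(A)$, and that invertibility of $A$ amounts to the existence of a spectral gap $\sigma(A)\cap(-\delta,\delta)=\emptyset$ for some $\delta>0$.

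For part (1), I would fix $A_0\in V$ and set $\delta=\mathrm{dist}(0,\sigma(A_0))>0$. Choosing a continuous $g:\R\to\R$ with $g\equiv 1$ on $[\delta/2,\infty)$ and $g\equiv 0$ on $(-\infty,-\delta/2]$, the perturbation inequality $\mathrm{dist}(0,\sigma(B))\ge\delta-\|B-A_0\|$ shows that every invertible $B\in S(H)$ with $\|B-A_0\|<\delta/2$ has spectrum outside $(-\delta/2,\delta/2)$, so that $g(B)=\chi_{(0,\infty)}(B)$ and likewise $g(A_0)=\chi_{(0,\infty)}(A_0)$. Norm continuity of the continuous functional calculus then yields $\|\chi_{(0,\infty)}(B)-\chi_{(0,\infty)}(A_0)\|<1$ for $B$ close to $A_0$, and two orthogonal projections at distance less than $1$ have the same rank; hence $\rank B_+=\rank (A_0)_+=m$ and $B\in V$. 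This proves $V$ is open in the set $\mathrm{Inv}$ of invertible operators in $S(H)$. Since $\mathrm{Inv}$ is the disjoint union of the analogous sets $V_{m'}$ over all admissible $m'$, each open by the same argument, the complement of $V$ in $\mathrm{Inv}$ is open, so $V$ is also closed.

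For part (2), I would first convert the order hypotheses into spectral data for $A$. From $A_0\le A$ and $A_0\in V$ we obtain $\langle Ax,x\rangle\ge\langle A_0x,x\rangle\ge\delta_0\|x\|^2$ for every $x$ in the positive spectral subspace $M_0=\Image\chi_{(0,\infty)}(A_0)$ of dimension $m$, where $\delta_0=\mathrm{dist}(0,\sigma(A_0))$. From $A\le A_1$ and $A_1\in V$ we obtain $\langle Ax,x\rangle\le\langle A_1x,x\rangle\le-\delta_1\|x\|^2$ for every $x$ in the negative spectral subspace $N_1=\Image\chi_{(-\infty,0)}(A_1)$ of codimension $m$, where $\delta_1=\mathrm{dist}(0,\sigma(A_1))$. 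The heart of the argument is a dimension count on spectral subspaces of $A$ itself. Comparing the bound on $N_1$ with $\chi_{(-\delta_1/2,\infty)}(A)$ (on whose range $\langle Ay,y\rangle\ge-\tfrac{\delta_1}{2}\|y\|^2$) and using that $\mathrm{codim}\,N_1=m<\infty$ forces a nonzero intersection as soon as $\dim\Image\chi_{(-\delta_1/2,\infty)}(A)>m$, I would deduce $\dim\Image\chi_{(-\delta_1/2,\infty)}(A)\le m$. Dually, comparing the bound on $M_0$ with $\chi_{(-\infty,\delta_0/2)}(A)$ gives $M_0\cap\Image\chi_{(-\infty,\delta_0/2)}(A)=0$, hence $\dim\Image\chi_{[\delta_0/2,\infty)}(A)\ge\dim M_0=m$. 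Since $[\delta_0/2,\infty)\subset(-\delta_1/2,\infty)$, the two spectral projections are nested, and the dimension bounds squeeze both ranges onto the same $m$-dimensional subspace; therefore $\chi_{(-\delta_1/2,\delta_0/2)}(A)=0$, i.e. $\sigma(A)\cap(-\delta_1/2,\delta_0/2)=\emptyset$. As $0$ lies in this interval, $A$ is invertible, and $\rank A_+=\dim\Image\chi_{[\delta_0/2,\infty)}(A)=m$, so $A\in V$.

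The main obstacle is precisely the invertibility of an arbitrary interior point $A$ of the order interval: order comparison with $A_0$ and $A_1$ controls $A$ only on a subspace of dimension $m$ and on one of codimension $m$, and it is the finiteness $m<\infty$ that makes the nonzero-intersection steps valid and pins down a gap around $0$. I would emphasize this as the essential use of the hypothesis $m<\infty$. As an alternative, once invertibility of every $A\in[A_0,A_1]$ were established, part (1) combined with the convexity, hence connectedness, of $[A_0,A_1]$ would already force $[A_0,A_1]\subset V$; but the spectral count above delivers $\rank A_+=m$ directly, so I would prefer it.
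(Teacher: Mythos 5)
Your proof is correct. The paper handles both parts with a single tool, the min--max theorem, in the form of the monotonicity principle $A \le B \Rightarrow \rank A_+ \le \rank B_+$: for (1) it sandwiches any $B$ with $\| B - A \| \le \varepsilon$ in the order interval $[A - \varepsilon I, A + \varepsilon I]$ and applies this monotonicity from both sides, and for (2) it applies it to $A_0 \le A$ and to $A + \varepsilon I \le A_1 + \varepsilon I$, where $\varepsilon$ is smaller than the spectral gap of $A_1$. Your part (2) is in essence the same argument: your comparison of $\Image \chi_{(-\delta_1/2, \infty)}(A)$ with the negative spectral subspace of $A_1$ is exactly the paper's $\varepsilon$-shift (with $\varepsilon = \delta_1/2$), except that you prove the needed monotonicity inline by the subspace-intersection count instead of quoting min--max, and you obtain as a bonus the explicit spectral gap $(-\delta_1/2, \delta_0/2)$ around $0$, which makes the invertibility of $A$ completely transparent (the paper leaves the final step ``$\rank A_+ = m$ and $A$ is invertible'' to the reader, where one must compare the nested finite-rank projections $\chi_{(0,\infty)}(A) \le \chi_{(-\varepsilon,\infty)}(A)$). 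Where you genuinely diverge is part (1): instead of the order sandwich plus min--max, you use norm continuity of the continuous functional calculus to conclude that $\chi_{(0,\infty)}(B)$ and $\chi_{(0,\infty)}(A_0)$ are at distance less than $1$, hence unitarily equivalent and of equal rank. Both routes for (1) are standard; the paper's keeps the whole lemma resting on one eigenvalue-monotonicity fact and is slightly shorter, while yours trades that for the projection-perturbation lemma and works verbatim for $m = \infty$ as well. Your closedness argument, partitioning the invertibles into the classes $V_{m'}$ and noting each is open, is the same as the paper's.
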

\begin{proof}
(1) It suffices to prove the openness of $V$ for each $m$. 
Let $A\in V$ for a fixed $m$.
Since the collection of all invertible operators is open, we may take a positive real number $\varepsilon$ such that any element in $[A-\varepsilon I, A+\varepsilon I]$ is invertible. 
Clearly, we have $A-\varepsilon I, A+\varepsilon I\in V$. 
Let $B\in [A-\varepsilon I, A+\varepsilon I]$.
Since $A-\varepsilon I\le B$, the min-max theorem implies that 
$m=\rank (A-\varepsilon I)_+\leq \rank B_+$. 
Since $B\le A+\varepsilon I$, we also have $\rank B_+ \leq \rank (A+\varepsilon I)_+ = m$. 
Thus we obtain $\rank B_+ = m$.

(2) Let $A\in [A_0, A_1]$. 
The min-max theorem implies 
$\rank A_+\geq \rank (A_0)_+=m$. 
We may take a positive real number $\varepsilon>0$ such that $\rank (A_1+\varepsilon I)_+ = m$. 
The min-max theorem implies $\rank (A+\varepsilon I)_+ \leq \rank (A_1+\varepsilon I)_+ = m$. 
Since $m<\infty$, we see $\rank A_+= m$ and $A$ is invertible.
\end{proof}

For a projection $P\in B(H)$, let us use the symbol $P^{\perp}:= I-P$.

\begin{lemma}\label{finite}
Let $A\in S(H)$. 
If either $A_+$ or $A_-$ has finite rank, then $\Phi_A$ is an order isomorphism from ${U}_A$ onto ${U}_{-A}$.
\end{lemma}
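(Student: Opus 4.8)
The plan is to deduce the theorem from the order-convexity criterion of Proposition~\ref{restriction}. Recall that $\Phi_A$ is a bijection of $U_A$ onto $U_{-A}$ with inverse $\Phi_{-A}$; hence $\Phi_A$ is an order isomorphism as soon as both $\Phi_A$ and $\Phi_{-A}$ are order preserving on their domains. By Proposition~\ref{restriction} this amounts to showing that $U_A$ and $U_{-A}$ are \emph{order-convex}, in the sense that $X\le Y$ with $X,Y\in U_A$ forces $[X,Y]\subset U_A$ (and likewise for $U_{-A}$). Since $(-A)_\pm=A_\mp$, the hypothesis is symmetric under $A\mapsto -A$, so it suffices to prove order-convexity of $U_A$; applying the result to $-A$ then disposes of $U_{-A}$. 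By the symmetry $+\leftrightarrow-$ of the argument below, I would further reduce to the case $\rank A_+<\infty$, the case $\rank A_-<\infty$ being entirely analogous.

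The key step is to linearize the membership condition for $\hat U_A$. Writing $A=|A|^{1/2}J|A|^{1/2}$ with $J=\operatorname{sign}(A)$, and using that $I+MN$ is invertible if and only if $I+NM$ is (with $M=Z|A|^{1/2}$, $N=J|A|^{1/2}$, so that $NM=J\,|A|^{1/2}Z|A|^{1/2}$), one checks that for $Z\in S(H)$ the operator $ZA+I$ is invertible if and only if the self-adjoint operator
\[
S(Z):=J_K+|A|^{1/2}Z|A|^{1/2}\big|_K
\]
is invertible on $K:=\overline{\Image A}$, where $J_K$ is the symmetry obtained by restricting $\operatorname{sign}(A)$ to $K$ (the last reduction uses that $J_K$ is a self-adjoint unitary, so $I+J_KW$ is invertible iff $J_K+W$ is). The decisive features of $S$ are that it is affine in $Z$ and monotone, $X\le Y\Rightarrow S(X)\le S(Y)$, so that $S$ sends the line segment $[X,Y]$ onto the operator segment $[S(X),S(Y)]$, and that $Z\mapsto S(Z)$ is a norm-continuous map carrying $\hat U_A$ into the invertible self-adjoint operators on $K$.

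With this in hand the theorem follows from Lemma~\ref{minmax}. Since $U_A$ is connected, $0\in U_A$, and $S(0)=J_K$ has $\rank (J_K)_+=\rank A_+=:k<\infty$, part~(1) of that lemma (the set of invertible self-adjoint operators with a prescribed number of positive eigenvalues is open and closed) forces $\rank S(Z)_+=k$ for every $Z\in U_A$. Now take $X\le Y$ in $U_A$: then $S(X)\le S(Y)$ are invertible with $\rank S(X)_+=\rank S(Y)_+=k<\infty$, so part~(2) of Lemma~\ref{minmax} shows that every operator in $[S(X),S(Y)]$ is invertible. In particular $S((1-c)X+cY)$ is invertible for all $c\in[0,1]$, that is, $[X,Y]\subset\hat U_A$; being connected and meeting the component $U_A$ at $X$, the segment lies in $U_A$. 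This is exactly order-convexity of $U_A$, completing the plan. The main obstacle, and the real content, is the passage to the self-adjoint pencil $S(Z)$: it converts the non-self-adjoint invertibility condition ``$ZA+I$ invertible'' into an eigenvalue-counting statement to which Lemma~\ref{minmax} applies, and the finiteness of $\rank A_+$ (or $\rank A_-$) is precisely what is needed to invoke part~(2).
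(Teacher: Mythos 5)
Your proposal is correct and follows essentially the same route as the paper's proof: reduce to order-convexity of $U_A$ via Proposition \ref{restriction}, convert invertibility of $ZA+I$ into invertibility of a self-adjoint affine pencil, use connectedness with Lemma \ref{minmax}(1) to fix the rank of the positive part, and then Lemma \ref{minmax}(2) on operator intervals. The only (cosmetic) difference is that the paper works with the symmetry $s(A_+)-s(A_+)^{\perp}$ on all of $H$, which satisfies $|A|^{1/2}\bigl(s(A_+)-s(A_+)^{\perp}\bigr)|A|^{1/2}=A$, instead of restricting $\operatorname{sign}(A)$ to $\overline{\Image A}$ as you do; also note your map sends $[X,Y]$ \emph{into} (not onto) $[S(X),S(Y)]$, which is all the argument needs.
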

\begin{proof}
Put $S:=s(A_+)-s(A_+)^{\perp}$, which is a self-adjoint unitary.
Let $X\in S(H)$. 
We have 
$$
\sigma(XA)\setminus \{0\}= \sigma(X\lvert A\rvert^{1/2} S\lvert A\rvert^{1/2})\setminus \{0\}
= \sigma(\lvert A\rvert^{1/2}X\lvert A\rvert^{1/2} S)\setminus \{0\}.
$$
Hence we have $X\in \hat{U}_A$ if and only if $\lvert A\rvert^{1/2}X\lvert A\rvert^{1/2} S+I$ is invertible, which is in turn equivalent to the condition $\lvert A\rvert^{1/2}X\lvert A\rvert^{1/2} + S$ is invertible.

Suppose that $A_+$ has rank $m<\infty$. 
We prove that $\Phi_A$ is order preserving. 
The preceding paragraph shows that $\lvert A\rvert^{1/2}X\lvert A\rvert^{1/2} + S$ is invertible for any $X\in {U}_A$.
Moreover, since $0\in U_A$ is connected, Lemma \ref{minmax} implies that the positive part
$\left(\lvert A\rvert^{1/2}X\lvert A\rvert^{1/2} + S\right)_+$
has rank $m$ for any $X\in {U}_A$. 
Suppose that $X, Y\in {U}_A$ satisfy $X\leq Y$. 
Let $Z\in [X, Y]$. 
Then we have the order relation 
$$
\lvert A\rvert^{1/2}X\lvert A\rvert^{1/2} + S\leq \lvert A\rvert^{1/2}Z\lvert A\rvert^{1/2} + S\leq \lvert A\rvert^{1/2}Y\lvert A\rvert^{1/2} + S.
$$
By Lemma \ref{minmax}, $\lvert A\rvert^{1/2}Z\lvert A\rvert^{1/2} + S$ is invertible. 
Hence $ZA+I$ is invertible for any $Z\in [X, Y]$. 
It follows that $[X, Y]\subset {U}_A$, and by Proposition \ref{restriction}, $\Phi_A(X)\leq \Phi_A(Y)$ holds. 

The case $A_-$ has finite rank can be considered in a similar way. 
The same argument shows that $\Phi_{A}^{-1} = \Phi_{-A}$ is also order preserving.
\end{proof}

\begin{proof}[Proof of Theorem \ref{compact}]
$(1)\Rightarrow(2)$
Suppose that $A_+$ is compact. 
We show that $\Phi_A$ is order preserving. 
By Lemma \ref{AXA} and Propositions \ref{formula}, \ref{restriction}, all we need to do is to prove that if $X \in U_A$ and $X \geq 0$ then $[0,X] \in U_A$. 

By $X \in U_A$, there exists a path $\tau : [0,1] \to U_A$ such that $\tau (0) = 0$ and $\tau (1) = X$. 
Since $A_+$ is compact we can find a sequence $(A_n) \subset S(H)$ such that $\lim A_n = A$ and $(A_n)_+$ is of finite rank for every positive integer $n$.
For every $t \in [0,1]$ the operator $\tau (t) A + I$ is invertible. It follows that there exists $\delta_t > 0$ such that if
$$
s \in (t-\delta_t , t + \delta_t) \cap [0,1] \ \ \ {\rm and} \ \ \ B \in S(H) \ \ \  {\rm and} \ \ \    \| B - A\| <\delta_t,
$$
then $\tau(s) B +I$ is invertible. Using the compactness we see that there exists $\delta > 0$ such that $\tau (s) B + I$ is invertible for every $s \in [0,1]$ and every $B \in S(H)$ with $\| B - A \| < \delta$.
Thus, after removing finitely many elements of the sequence we may assume that $\tau (s) A_n +I$ is invertible for every $s \in [0,1]$ and every positive integer $n$.
It follows that $X \in U_{A_n}$. 
By Lemma \ref{finite} and Proposition \ref{restriction}, we conclude that $[0,X] \subset U_{A_n}$. 
By Proposition \ref{interval} we have $X^{1/2} A_n X^{1/2} \ge -I$ for each $n$.
Taking the limit $n \to \infty$, we obtain $X^{1/2} A X^{1/2} \ge -I$.
By Proposition \ref{interval} again, we have $[0,X] \subset U_A$, as desired.

The case $s(A_-)$ is compact can be considered in a similar way, or we may use the equation $\Phi_{-A}(X) = (-XA+I)^{-1}X = -\Phi_A(-X)$ for $X\in U_{-A}$ as well. 
The same argument shows that $\Phi_{A}^{-1} = \Phi_{-A}$ is also order preserving.

$(2)\Rightarrow(1)$
Suppose that both $A_+$ and $A_-$ are noncompact. 
Then we may take a positive real number $\varepsilon >0$ such that both $\chi_{[\varepsilon, \infty)}(A)$ and $\chi_{(-\infty, -\varepsilon]}(A)$ have infinite rank. 
We can find a projection $P\in S(H)$ with $\chi_{[\varepsilon, \infty)}(A)\leq P\leq \chi_{(-\infty, -\varepsilon]\cup[\varepsilon, \infty)}(A)$ such that $P$ and $\chi_{[\varepsilon, \infty)}(A)$ are unitarily equivalent in $K:=\chi_{(-\infty, -\varepsilon]\cup[\varepsilon, \infty)}(A)H$, $P\neq \chi_{[\varepsilon, \infty)}(A)$ and $PA=AP$. 
Take a unitary $u=e^{iB}\in B(K)$, $B\in S(K)$, such that $u\chi_{[\varepsilon, \infty)}(A)u^* = P$. 
Put $S:= \chi_{[\varepsilon, \infty)}(A)- \chi_{(-\infty, -\varepsilon]}(A)$, which can be considered a self-adjoint unitary in $B(K)$. 
We define a path $\tau: [0, 1]\to S(K)$ by 
$$
\tau(t) = \lvert A\rvert^{-1/2}(e^{itB} Se^{-itB} -S) \lvert A\rvert^{-1/2}
$$
(which is well-defined since $A$ restricted to $K$ is invertible).
We identify each element $X\in S(K)$ with $\chi_{(-\infty, -\varepsilon]\cup[\varepsilon, \infty)}(A)X\chi_{(-\infty, -\varepsilon]\cup[\varepsilon, \infty)}(A)\in S(H)$. Then 
$$
\lvert A \rvert^{1/2}\tau(t)\lvert A\rvert^{1/2} + (s(A_+)-s(A_+)^{\perp})= e^{itB} Se^{-itB} +(\chi_{(0, \varepsilon)}(A)- \chi_{(-\varepsilon, 0]}(A))
$$
is invertible in $B(H)$ for any $t\in [0, 1]$. 
In particular, we have $\tau(1)\in U_A$ (see the first paragraph of the proof of Lemma \ref{finite}). 
However, we have 
\[
\tau(1) = \lvert A\rvert^{-1/2} 2(P-\chi_{[\varepsilon, \infty)}(A)) \lvert A\rvert^{-1/2} = 2\lvert A\rvert^{-1}(P-\chi_{[\varepsilon, \infty)}(A))\geq 0
\]
and
\[
\begin{split}
\Phi_A(\tau(1)) &= \Phi_A(2\lvert A\rvert^{-1}(P-\chi_{[\varepsilon, \infty)}(A)))\\
&= 2\lvert A\rvert^{-1}(P-\chi_{[\varepsilon, \infty)}(A))\cdot (A\cdot 2\lvert A\rvert^{-1}(P-\chi_{[\varepsilon, \infty)}(A))+I)^{-1}\\
&= -2\lvert A\rvert^{-1}(P-\chi_{[\varepsilon, \infty)}(A))\ngeq 0=\Phi_A(0).
\end{split}
\]
\end{proof}

\section{Finite-dimensional case}
When dealing with the finite-dimensional case we will always identify operators with matrices.
We denote by $E_n$ the effect algebra
on the $n$-dimensional Hilbert space, $E_n = [0,I]$, that is, $E_n$ is the set of all $n \times n$ hermitian matrices whose all eigenvalues belong to the unit interval $[0,1]$. As already indicated in Introduction we will begin with  an automatic continuity result for order embeddings of matrix domains. First we need some technical lemmas. 

\begin{lemma}\label{more,more}
Let $A,B \in S_n$ be two different matrices with $A \le B$. Denote $r = {\rm rank}\, (B-A)$. Then there exists a continuous order isomorphism 
from $E_r$ onto $[A,B]$.
\end{lemma}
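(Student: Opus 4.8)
The plan is to realize $[A,B]$ as an affine image of the standard effect algebra $E_r$ under a map built from a rank factorization of $B-A$. Since $B-A\ge 0$ has rank $r$, I would write $B-A=VV^\ast$ with $V$ an $n\times r$ matrix of full column rank (e.g.\ take the spectral decomposition $B-A=\sum_{i=1}^r d_i u_i u_i^\ast$ with $d_i>0$ and orthonormal $u_i$, and set $V=[\sqrt{d_1}\,u_1,\dots,\sqrt{d_r}\,u_r]$). Then define $\phi:E_r\to S_n$ by $\phi(X)=A+VXV^\ast$. The candidate inverse is the explicit affine map $Z\mapsto V^+(Z-A)(V^+)^\ast$, where $V^+=(V^\ast V)^{-1}V^\ast$ is the left inverse of $V$.

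The routine verifications come first. For $0\le X\le I_r$ one has $A=A+V\cdot 0\cdot V^\ast\le A+VXV^\ast\le A+VV^\ast=B$, so $\phi(E_r)\subset[A,B]$; and $X\le Y$ clearly gives $\phi(X)\le\phi(Y)$, so $\phi$ is order preserving. Injectivity follows from $V^+V=I_r$, which yields $V^+\bigl(\phi(X)-A\bigr)(V^+)^\ast=X$; the same identity shows the candidate inverse really inverts $\phi$ on its image. Both $\phi$ and its inverse are affine in the matrix entries, hence continuous, so once bijectivity onto $[A,B]$ is established $\phi$ will automatically be a homeomorphism, which is more than required.

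The one genuinely substantive point — and the step I expect to be the main obstacle — is surjectivity onto $[A,B]$ together with the claim that the inverse maps $[A,B]$ into $E_r$. The key observation is that for $Z\in[A,B]$ the relation $0\le Z-A\le VV^\ast$ forces $\Image(Z-A)\subseteq\Image(VV^\ast)=\Image V$, because $M\le N$ for positive $M,N$ gives $\Ker N\subseteq\Ker M$. Writing $P=VV^+$ for the orthogonal projection onto $\Image V$, this yields $P(Z-A)P=Z-A$, hence $\phi\bigl(V^+(Z-A)(V^+)^\ast\bigr)=A+P(Z-A)P=Z$, proving surjectivity. Setting $X=V^+(Z-A)(V^+)^\ast$ one has $X\ge 0$ at once, while $V(I_r-X)V^\ast=VV^\ast-(Z-A)=B-Z\ge 0$ combined with the surjectivity of $V^\ast$ (equivalently, $\{V^\ast y:y\in\C^n\}=\C^r$) gives $I_r-X\ge 0$; thus $X\in E_r$. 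Finally, $Z_1\le Z_2$ in $[A,B]$ immediately gives $V^+(Z_1-A)(V^+)^\ast\le V^+(Z_2-A)(V^+)^\ast$, so the inverse is order preserving and $\phi$ is the desired continuous order isomorphism. Alternatively, surjectivity could be obtained from Douglas' lemma: $Z-A\le VV^\ast$ produces a contraction $W$ with $(Z-A)^{1/2}=VW$, and $X=WW^\ast\in E_r$ satisfies $VXV^\ast=Z-A$.
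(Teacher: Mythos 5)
Your proposal is correct and takes essentially the same route as the paper: both produce an affine order isomorphism of the form $X \mapsto A + VXV^\ast$, the paper arriving at it by reducing (translation, unitary diagonalization, diagonal congruence) to the normal form $A=0$, $B=$ the rank-$r$ diagonal projection, which is just a coordinate version of your rank factorization $B-A=VV^\ast$. The only difference is bookkeeping: you verify surjectivity invariantly, via the range inclusion forced by $0\le Z-A\le VV^\ast$ together with the explicit left inverse $V^+$, where the paper reads the same fact off the normal form (``it is now clear'' that $[0,P]$ consists of matrices supported in the upper-left $r\times r$ corner).
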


Actually, we will prove more. We will find an order isomorphism that is not just continuous but a restriction of an affine map.

\begin{proof}
After applying a translation $X \mapsto X-A$ we may assume with no loss of generality that $A=0$ and $B \in S_n$ satisfies $B \ge 0$ and
${\rm rank }\, B =r$. Further, there is no loss of generality in assuming that the matrix $B$ is diagonal with the first $r$ diagonal entries positive and all others equal to zero. Applying a congruence transformation $X \mapsto TXT^\ast$, where $T$ is a suitable diagonal invertible element in $M_n$, we can  finally assume that $B$ is
the diagonal projection with the first  $r$ diagonal entries equal to one and all others equal to zero. It is now clear that $[A,B]$ is
the set of all $n \times n$ matrices whose upper left $r \times r$ corner is an element of $E_r$ and all entries outside the upper left $r \times r$ corner are zero.
\end{proof}

\begin{lemma}\label{muci}
Let $m,n$ be positive integers and $\phi : E_n \to S_m$ an order embedding. Then the set of all points $c \in (0,1)$ for which $\phi$ is not continuous at $cI$ is at most countable.
\end{lemma}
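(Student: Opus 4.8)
The plan is to reduce this two-variable continuity question to the behaviour of a single monotone real function. Define $h : (0,1) \to \R$ by $h(c) = \tr \phi(cI)$. Since $\phi$ is order preserving and the trace is a positive functional, $h$ is monotone increasing: if $c < c'$ then $cI \le c'I$, so $\phi(cI) \le \phi(c'I)$, and hence $h(c) \le h(c')$. A monotone real function on an interval has at most countably many points of discontinuity, so it suffices to prove the implication that if $h$ is continuous at $c$, then $\phi$ is continuous at the point $cI$ as a map on all of $E_n$. This is the step that links the scalar chain to genuine continuity of $\phi$.

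The key step is an order squeeze that is available precisely because $cI$ is a scalar matrix. Fix $c \in (0,1)$ and $\delta$ with $0 < \delta < \min\{c,\,1-c\}$. If $X \in E_n$ satisfies $\|X - cI\| \le \delta$, then all eigenvalues of the self-adjoint matrix $X - cI$ lie in $[-\delta, \delta]$, which means $(c-\delta)I \le X \le (c+\delta)I$; note $(c\pm\delta)I \in E_n$. Applying the monotone map $\phi$ gives $\phi((c-\delta)I) \le \phi(X) \le \phi((c+\delta)I)$, and subtracting $\phi((c-\delta)I)$ yields $0 \le \phi(X) - \phi((c-\delta)I) \le \phi((c+\delta)I) - \phi((c-\delta)I)$. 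Here I would invoke two elementary facts about self-adjoint matrices: if $0 \le P \le Q$ then $\|P\| \le \|Q\|$, and if $P \ge 0$ then $\|P\| \le \tr P$. Combining them, $\|\phi(X) - \phi((c-\delta)I)\| \le \tr\bigl(\phi((c+\delta)I) - \phi((c-\delta)I)\bigr) = h(c+\delta) - h(c-\delta)$, and likewise $\|\phi(cI) - \phi((c-\delta)I)\| \le h(c) - h(c-\delta)$.

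Putting these together through the triangle inequality, every $X \in E_n$ with $\|X - cI\| \le \delta$ satisfies $\|\phi(X) - \phi(cI)\| \le \bigl(h(c+\delta) - h(c-\delta)\bigr) + \bigl(h(c) - h(c-\delta)\bigr)$. When $h$ is continuous at $c$, the right-hand side tends to $0$ as $\delta \to 0$, which is exactly continuity of $\phi$ at $cI$; this establishes the reduction and hence the lemma. I expect the only substantive point to be the order squeeze of the second paragraph: a norm-ball around $cI$ is trapped inside the order interval $[(c-\delta)I,\,(c+\delta)I]$, a feature enjoyed by scalar matrices but not by a generic base point, and this is precisely why the statement concerns the points $cI$. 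Everything else is routine, and I note that only monotonicity of $\phi$, not the full order-embedding property, is used.
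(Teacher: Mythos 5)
Your proof is correct and follows essentially the same route as the paper: both reduce to the monotone scalar function $c \mapsto \tr\phi(cI)$, which has at most countably many discontinuities, and then use the order squeeze $(c-\delta)I \le X \le (c+\delta)I$ for $\|X - cI\| \le \delta$ to convert trace continuity into norm continuity of $\phi$ at $cI$. The only cosmetic difference is that the paper converts the trace bound into the two-sided order bound $\phi(cI) - \varepsilon I \le \phi(X) \le \phi(cI) + \varepsilon I$, while you use $\|P\| \le \tr P$ for $P \ge 0$ together with the triangle inequality; these are the same elementary fact in two guises.
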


\begin{proof}
Consider the order embedding $f : [0,1] \to \mathbb{R}$ defined by $f(t) := {\rm tr}\, (\phi (tI))$. Here, ${\rm tr}\, (\cdot )$ stands for the trace function. We know that the set $N$ of all points $c \in (0,1)$ at which $f$ is not continuous is at most countable. Let $c \in (0,1) \setminus N$ and $\varepsilon >0$. Then we can find $\delta$, $0 < \delta < \min \{ c, 1-c \}$, such that for every real $t \in [ c- \delta , c + \delta ]$ we have $| f(c) -  f(t) | \le \varepsilon$.
In particular,  $| {\rm tr}\, (\phi (cI) - \phi ((c \pm \delta) I)) | \le \varepsilon$. Since $\phi ((c - \delta)I) \le \phi (cI)$ and $\phi ((c + \delta)I) \ge \phi (cI)$
we conclude that
$\phi (cI) - \varepsilon I  \le \phi ((c-\delta)I) \le \phi ((c+\delta)I) \le \phi (cI) + \varepsilon I$ (note that we are using the same symbol $I$ to denote the $n \times n$ identity matrix and the $m \times m$ identity matrix). This further implies that for every $X \in [ (c-\delta)I , ( c+ \delta)I ]$ we have $\phi (cI) - \varepsilon I \le \phi (X) \le \phi (cI) + \varepsilon I$, and thus, $\phi$ is continuous at $cI$.
\end{proof}

\begin{lemma}\label{ankar}
Let $m,n$ be positive integers and assume that there exists an order embedding $\phi : E_n \to S_m$. Then $n \le m$. 
\end{lemma}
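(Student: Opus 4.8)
The goal is to show that the existence of an order embedding $\phi : E_n \to S_m$ forces $n \le m$. The plan is to argue by contradiction, assuming $n > m$, and to extract a lower bound on $m$ from the combinatorial richness of the order structure on $E_n$ that cannot be accommodated inside $S_m$. The natural source of such richness is a long chain together with the local dimension of the order-interval structure: $E_n$ contains order intervals $[A,B]$ whose order-isomorphism type, by Lemma \ref{more,more}, is that of $E_r$ with $r = \rank(B-A)$ ranging up to $n$. The idea is that an order embedding must send such an interval into an interval of $S_m$ of \emph{at least} the same rank, and the maximal rank available in $S_m$ is $m$.

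\textbf{Key steps.} First I would fix a maximal chain of projections in $E_n$, say $0 = P_0 < P_1 < \cdots < P_n = I$, where $P_k$ is the diagonal projection onto the first $k$ coordinates, so that $\rank(P_k - P_{k-1}) = 1$ for each $k$. Applying $\phi$ gives a strictly increasing chain $\phi(P_0) \le \phi(P_1) \le \cdots \le \phi(P_n)$ in $S_m$; because $\phi$ is an order embedding and the $P_k$ are strictly ordered, these images are strictly ordered as well, i.e.\ $\phi(P_{k-1}) < \phi(P_k)$ fails only in the non-invertibility sense, so I must first check that $\phi(P_{k-1}) \ne \phi(P_k)$ (immediate from injectivity) and that $\phi(P_{k-1}) \le \phi(P_k)$ with $\phi(P_{k-1}) \ne \phi(P_k)$. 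The heart of the argument is then to bound the length of a strict chain in $S_m$: I would show that any chain $C_0 < C_1 < \cdots < C_N$ in $S_m$ with $C_{j-1} \le C_j$ and $C_{j-1} \ne C_j$ has length $N \le$ something governed by $m$. Since $\tr(C_j)$ is strictly increasing along the chain this alone does not bound $N$, so the correct invariant is not the trace but the fact that each strict step $C_{j-1} \le C_j$, $C_{j-1}\ne C_j$, need not increase rank. Thus the clean route is instead to use Lemma \ref{more,more} directly: consider the single interval $[0,I] = E_n$ itself, restrict $\phi$ to a sub-interval of full rank, and track how $\phi$ interacts with the rank of differences.

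\textbf{The decisive mechanism.} The step I expect to carry the argument is the following rank-monotonicity observation: if $A \le B$ in $E_n$ with $\rank(B-A) = r$, then $\rank(\phi(B) - \phi(A)) \ge r$. To prove this I would use that, by Lemma \ref{more,more}, $[A,B]$ is order isomorphic to $E_r$, and inside $E_r$ one can find $r$ ``independent'' rank-one directions — concretely, $r$ mutually orthogonal rank-one projections $Q_1, \dots, Q_r$ summing to the top element of the interval. Their images under the order embedding must be pairwise incomparable and order-independent in $[\phi(A), \phi(B)] \subset S_m$, and I would show that $r$ such order-independent elements cannot exist unless $\rank(\phi(B)-\phi(A)) \ge r$. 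Applying this with $A = 0$, $B = I$, $r = n$ yields $\rank(\phi(I) - \phi(0)) \ge n$, while $\phi(I) - \phi(0) \in S_m$ has rank at most $m$; hence $n \le m$.

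\textbf{Main obstacle.} The hard part will be the rank-monotonicity step, i.e.\ making precise the claim that an order embedding cannot compress rank. The difficulty is that order embeddings are a priori only required to respect $\le$, with no linearity or continuity assumed, so I cannot directly compute ranks of differences. The way I would overcome this is to exploit the \emph{lattice-theoretic} or \emph{interval-theoretic} fingerprint of rank: within an operator interval, the maximal number of pairwise-orthogonal nonzero ``atoms'' below the top (equivalently the supremum of lengths of strict chains inside the interval) equals the rank of the difference of the endpoints, and this quantity is preserved (or can only grow) under order embeddings because it is defined purely in terms of the order. Establishing that this combinatorial rank invariant of the interval $[A,B]$ injects into that of $[\phi(A),\phi(B)]$, and identifying it with $\rank(B-A)$ via Lemma \ref{more,more}, is the crux; once that is in place the inequality $n \le m$ is immediate from $\rank \le m$ in $S_m$.
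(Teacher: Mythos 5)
There is a genuine gap, and it sits exactly where you located it: the ``rank-monotonicity'' step. First, observe that modulo Lemma \ref{more,more} this step is not a stepping stone but is equivalent to the lemma itself: restricting $\phi$ to $[A,B]$, which is order isomorphic to $E_r$, gives an order embedding of $E_r$ into $[\phi(A),\phi(B)]$, which is order isomorphic to $E_s$ with $s = \rank (\phi(B)-\phi(A))$; so asserting $s \ge r$ is precisely the statement being proved. That is fine only if your proposed mechanism for proving it is sound, and it is not. The invariants you offer are either nonexistent or not preserved by order embeddings: (i) the intervals in question have no order-theoretic atoms, since $0 < \frac{1}{2}A < A$ for every $A > 0$; (ii) the supremum of lengths of strict chains in any nontrivial interval is infinite (take $\{ tB \, : \, t \in [0,1] \}$), as you yourself observed earlier when you discarded the trace argument, so chain length cannot equal rank; (iii) orthogonality, or its order-theoretic surrogate ``the only common lower bound is the bottom element'', is preserved by order isomorphisms but \emph{not} by order embeddings: whether $\phi(Q_1)$ and $\phi(Q_2)$ admit a common lower bound strictly above $\phi(0)$ depends on elements of $S_m$ lying outside the image of $\phi$, over which an embedding gives no control. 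What does survive an embedding is pairwise incomparability of the images, but that is far too weak: $S_2$ already contains infinite antichains (e.g.\ the rank-one projections), so incomparability of the images of $n$ mutually orthogonal rank-one projections cannot force $m \ge n$ for any $n \ge 3$.

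The missing ingredient --- and this is the paper's key idea --- is analytic, not combinatorial: automatic continuity. By Lemma \ref{muci} (applied to the monotone function $t \mapsto \tr (\phi(tI))$), $\phi$ is continuous at $cI$ for some $c \in (0,1)$. The paper then proceeds by induction on $n$: for a projection $P$ of rank $k-1$, the interval $[cI, P + cP^\perp]$ has endpoints differing by $(1-c)P$, hence is order isomorphic to $E_{k-1}$ by Lemma \ref{more,more}, and the induction hypothesis gives $\rank (\phi(P+cP^\perp) - \phi(cI)) \ge k-1$. The rank is then pushed up to $k$ by showing that $(\phi(P+cP^\perp)-\phi(cI)) + (\phi(cP+dP^\perp)-\phi(cI))$ has rank $\ge k$ for $d > c$: if not, a range-inclusion argument for positive semidefinite matrices traps the increments $\phi(cP+d'P^\perp)-\phi(cI)$ in the $(k-1)$-dimensional range of $\phi(P+cP^\perp)-\phi(cI)$, and continuity at $cI$ then yields some $d' > c$ with $\phi(cP+d'P^\perp) \le \phi(P+cP^\perp)$, which the embedding property pulls back to the false inequality $cP + d'P^\perp \le P + cP^\perp$. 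Continuity at the single point $cI$ is exactly what converts a range inclusion into an order inequality that can be reflected back into $E_n$; no purely order-combinatorial invariant is available to do this job, and your proposal does not supply one.
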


\begin{proof}
By induction on $n$. The case $n=1$ is trivial. Assume that the statement holds for $n= k-1$ ($ \ge 1$). We prove the statement for $n=k$. By Lemma \ref{muci}, we can find a real number $c$, $0 < c < 1$, such that $\phi$ is continuous at $cI$. We take any projection $P\in E_n$ of rank $k-1$. Observe that the rank of $(P+cP^\perp) - cI$ is $k-1$. By Lemma \ref{more,more} and the induction hypothesis the rank of $\phi (P + cP^\perp) - \phi (cI)$ is no smaller than $k-1$. 

We claim that $(\phi (P + cP^\perp) - \phi (cI)) + (\phi (cP + dP^\perp) - \phi (cI))$ has rank $\ge k$ for every $d$, $c < d \le 1$. If this does not hold, then there exists some $d > c$ such that the range of $\phi (cP + dP^\perp) - \phi (cI)$ is contained in the range of $\phi (P + cP^\perp) - \phi (cI)$.
Since $\phi$ is continuous at $cI$ there exists $d'$ with $c < d' < d$ such that $\phi (cP + d' P^\perp) - \phi (cI) \le \phi (P + cP^\perp) - \phi (cI)$, and therefore $\phi (cP + d' P^\perp)  \le \phi (P + cP^\perp)$. Since $\phi$ is an order embedding we necessarily have $cP + d' P^\perp  \le P + cP^\perp$, a contradiction. Thus, we have shown that $m \ge k$.
\end{proof}

\begin{corollary}\label{kadkrene}
Let $n$ be a positive integer and $A,B \in S_n$ hermitian matrices with $A < B$. Assume that $\phi : [A,B] \to S_n$ is an order embedding. Then $\phi (A) < \phi (B)$.
\end{corollary}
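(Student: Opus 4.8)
The plan is to transport the order embedding $\phi$ onto a pair of effect algebras and then read off the conclusion from the rank bound of Lemma \ref{ankar}.

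First I would record two consequences of the hypotheses. Because $A < B$, the matrix $B-A$ is invertible, so $\operatorname{rank}(B-A)=n$, and Lemma \ref{more,more} furnishes an order isomorphism $\eta_1 : E_n \to [A,B]$. On the other side, since $\phi$ preserves order in both directions it is injective, so $\phi(A)\neq\phi(B)$, while $A\le B$ gives $\phi(A)\le\phi(B)$; hence $r:=\operatorname{rank}(\phi(B)-\phi(A))$ satisfies $1\le r\le n$, and Lemma \ref{more,more} furnishes a second order isomorphism $\eta_2 : E_r \to [\phi(A),\phi(B)]$.

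The key observation I would then use is that $\phi$ actually maps $[A,B]$ into $[\phi(A),\phi(B)]$: any $X\in[A,B]$ satisfies $A\le X\le B$, whence $\phi(A)\le\phi(X)\le\phi(B)$ by order preservation. Thus $\phi$ corestricts to an order embedding $[A,B]\to[\phi(A),\phi(B)]$, and since a composition of order embeddings is again an order embedding, the map
\[
\eta_2^{-1}\circ\phi\circ\eta_1 : E_n \to E_r \subset S_r
\]
is an order embedding. Applying Lemma \ref{ankar} with $m=r$ forces $n\le r$, and together with $r\le n$ this yields $r=n$. Therefore $\phi(B)-\phi(A)$ has full rank and is invertible, which combined with $\phi(A)\le\phi(B)$ is precisely $\phi(A)<\phi(B)$.

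I do not anticipate a real obstacle here: once the corestriction into $[\phi(A),\phi(B)]$ is noticed, the statement becomes a one-line application of the dimension inequality in Lemma \ref{ankar}. The only point requiring slight care is verifying $r\ge 1$, so that Lemma \ref{more,more} may legitimately be invoked on the interval $[\phi(A),\phi(B)]$; this is exactly where the injectivity of the order embedding enters.
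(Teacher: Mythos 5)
Your proposal is correct and follows essentially the same route as the paper: identify $[A,B]$ with $E_n$ and $[\phi(A),\phi(B)]$ with $E_r$ via Lemma \ref{more,more}, note that $\phi$ corestricts to an order embedding between these intervals, and apply the dimension bound of Lemma \ref{ankar} to force $r=n$. Your additional care in checking $\phi(A)\neq\phi(B)$ (so that Lemma \ref{more,more} applies to the target interval) is a detail the paper leaves implicit, but the argument is the same.
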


\begin{proof}
By Lemma \ref{more,more}, the interval $[A,B]$ is order isomorphic to $E_n$ while the interval $[\phi (A), \phi (B)]$ is order isomorphic to $E_m$, where $m$ is the rank of $\phi (B) - \phi (A)$. Since $\phi$ maps $[A,B]$ into $[\phi (A), \phi (B)]$, Lemma \ref{ankar} yields that $m=n$.
\end{proof}

The following result can be found in \cite{Se6}.

\begin{theorem}\label{staro}
{\rm (\cite[Theorem 2.4]{Se6}).} Let $n \ge 2$. For every pair $A,B \in (0,I) \subset E_n$ there exists an order automorphism $\phi : E_n \to E_n$ such that $\phi (A) = B$.
\end{theorem}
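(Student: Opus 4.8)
The statement claims that the order automorphism group of the effect algebra $E_n$ (for $n\ge 2$) acts transitively on the open interior $(0,I)$. The plan is to exhibit, for each target point, an explicit order automorphism obtained by composing three natural families of order automorphisms of $E_n$, and then to show that these building blocks already suffice to move one arbitrary interior point to another. Concretely, I would rely on the explicit formula for order automorphisms of $E_n=[0,I]$ recorded in Theorem \ref{fpfq}, which provides maps of the form
\[
\phi(X)=f_q\!\left(\bigl(f_p(TT^\ast)\bigr)^{-1/2} f_p(TXT^\ast)\bigl(f_p(TT^\ast)\bigr)^{-1/2}\right),
\]
together with the scalar functions $f_p$ and the congruence transformations $X\mapsto TXT^\ast$.

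First I would reduce the problem using the group structure: it suffices to show that for an arbitrary $A\in(0,I)$ there is an order automorphism sending $A$ to a single fixed reference point, say $\tfrac12 I$, since then composing one such map with the inverse of another handles an arbitrary pair $A,B$. Next, given $A\in(0,I)$, I would diagonalize $A$ by a unitary conjugation $X\mapsto UXU^\ast$ (a manifestly order-preserving automorphism of $E_n$), reducing to the case where $A$ is a diagonal matrix with all diagonal entries in the open interval $(0,1)$. The core of the argument is then to move a diagonal matrix with arbitrary interior entries to the scalar matrix $\tfrac12 I$. For this I would first apply a congruence $X\mapsto TXT^\ast$ with $T$ diagonal and invertible chosen so that $TAT^\ast$ becomes a scalar multiple of $I$ inside $(0,I)$; this uses the fact that congruence by a diagonal invertible $T$ followed by the normalizing functional calculus from Theorem \ref{fpfq} keeps us inside $E_n$ and remains an order automorphism. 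Finally I would apply a scalar map of the form $f_p$ (extended to matrices by functional calculus) to adjust the remaining scalar eigenvalue to exactly $\tfrac12$; since $f_p$ is a bijective monotone increasing map of $[0,1]$ onto itself and $p$ ranges so that $f_p$ can send any interior value to any other interior value, this last step is always achievable.

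The step I expect to require the most care is verifying that the congruence transformation genuinely lands inside $E_n$ and stays an \emph{automorphism} of $E_n$ rather than merely an order embedding into some larger interval: a bare congruence $X\mapsto TXT^\ast$ need not preserve $[0,I]$, which is precisely why Theorem \ref{fpfq} inserts the normalizing factors $\bigl(f_p(TT^\ast)\bigr)^{-1/2}$. Thus the delicate bookkeeping is to choose the parameters $p,q$ and the operator $T$ consistently so that the composite formula of Theorem \ref{fpfq} maps the given $A$ to $B$ while remaining a bona fide order automorphism of $E_n$. The cleanest route is therefore to read Theorem \ref{fpfq} in reverse: rather than constructing the pieces ad hoc, I would argue that the group generated by unitary conjugations, the congruences appearing in \eqref{qaui}, and the scalar functions $f_p$ acts transitively on $(0,I)$, and that the freedom in choosing $T$ (to diagonalize and rescale) together with the one-parameter family $f_p$ provides enough degrees of freedom to realize any prescribed interior value. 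Once transitivity to the fixed point $\tfrac12 I$ is established, the full transitivity claim follows immediately by composing with an inverse.
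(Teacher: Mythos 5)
A preliminary remark: the paper offers no proof of this statement at all --- it is imported verbatim from \cite[Theorem 2.4]{Se6} --- so your proposal can only be judged on its own merits rather than against an internal argument. Your outer strategy (reduce via the group structure to sending an arbitrary $A\in(0,I)$ to the reference point $\frac{1}{2}I$, diagonalize $A$ by a unitary conjugation, then use the building blocks of the formula (\ref{qaui})) is workable, but the core step fails as stated. You choose a diagonal invertible $T$ with $TAT^\ast=sI$ and claim that the normalizing factors of Theorem \ref{fpfq} then return you to an automorphism of $E_n$ that still sends $A$ to a scalar. They do not: the normalization $\left(f_p(TT^\ast)\right)^{-1/2}(\cdot)\left(f_p(TT^\ast)\right)^{-1/2}$ depends on $T$, hence on $A$, and is not a scalar congruence. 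Concretely, for $A=\diag(a_1,\ldots,a_n)$ and $T=\diag(t_1,\ldots,t_n)$ the map $\psi(X)=\left(f_p(TT^\ast)\right)^{-1/2}f_p(TXT^\ast)\left(f_p(TT^\ast)\right)^{-1/2}$ sends $A$ to the diagonal matrix with entries $f_p(t_i^2a_i)/f_p(t_i^2)$; with your choice $t_i^2=s/a_i$ these equal $f_p(s)/f_p(s/a_i)$, which are pairwise distinct whenever the $a_i$ are (for $n=2$, $a_1=1/4$, $a_2=1/2$, $s=1/8$, $p=1/2$ they are $1/3$ and $5/9$). So $\psi(A)$ is not scalar, the final scalar adjustment by $f_q$ has nothing to act on, and the closing appeal to ``enough degrees of freedom'' does not close this hole. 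There is also a secondary logical gap: Theorem \ref{fpfq} only asserts that every automorphism has the form (\ref{qaui}); you need the converse --- that such formulas, with your $T$, which will in general violate $\|T\|\le 1$, actually define automorphisms of $E_n$ --- and this must be justified separately.

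Both defects are repairable, which shows your plan was close. The function $t\mapsto f_p(t^2a)/f_p(t^2)=a(pt^2+1-p)/(apt^2+1-p)$ increases strictly from $a$ to $1$ on $(0,\infty)$, so instead of making $TAT^\ast$ scalar you should solve $f_p(t_i^2a_i)/f_p(t_i^2)=c$ for each $i$, where $c$ is any common target with $\max\{1/2,\max_i a_i\}<c<1$; then $\psi(A)=cI$, and $X\mapsto f_q(X)$ with $q=(2c-1)/(c-1)<0$ is an order automorphism of $E_n$ sending $cI$ to $\frac{1}{2}I$. That these maps are indeed order automorphisms of $E_n$ for arbitrary invertible $T$, $0<p<1$ and $q<1$ follows from the decomposition into the four order isomorphisms $\phi_1,\phi_2,\phi_3,\phi_4$ displayed in the proof of Lemma \ref{ABCD}, using that $f_p$ is operator monotone on all of $[0,\infty)$ when $0<p<1$, so that $\|T\|\le 1$ is not needed there.
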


By Theorem \ref{fpfq} every order automorphism of $E_n$ is continuous. Using Lemma \ref{more,more} and the remark that follows that lemma we can easily deduce the next statement.

\begin{corollary}\label{jebemt}
Let $n \ge 2$ and $A,B,C,D,E,F \in S_n$ with $A< B < C$ and $D < E < F$. Then there exists a continuous order isomorphism $\phi : [A,C] \to [D,F]$ such that $\phi (B) = E$.
\end{corollary}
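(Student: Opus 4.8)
The plan is to reduce Corollary \ref{jebemt} to Theorem \ref{staro} by first building a continuous order isomorphism $[A,C] \to E_n$ that sends the prescribed interior point $B$ to a convenient interior point, doing the same on the target side, and then using the homogeneity of $E_n$ to match up the images of $B$ and $E$. First I would invoke Lemma \ref{more,more}, together with the remark immediately following it, to obtain an order isomorphism $\alpha : E_n \to [A,C]$ that is the restriction of an affine map; here I use that $A < C$ forces $\mathrm{rank}\,(C-A) = n$, so the domain of the isomorphism is $E_n$ rather than some $E_r$ with $r < n$. Being affine, $\alpha$ is automatically continuous, and so is its inverse. Likewise I would take an affine continuous order isomorphism $\beta : E_n \to [D,F]$.

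Next I would locate the points of $E_n$ corresponding to $B$ and $E$. Since $A < B < C$, the point $\alpha^{-1}(B)$ lies in the order-interior of $E_n$; more precisely, writing the affine isomorphism out shows $\alpha^{-1}(B) \in (0,I)$, i.e.\ all its eigenvalues lie strictly between $0$ and $1$. The same reasoning gives $\beta^{-1}(E) \in (0,I)$. Now I would apply Theorem \ref{staro} with the pair $\alpha^{-1}(B), \beta^{-1}(E) \in (0,I)$ to produce an order automorphism $\gamma : E_n \to E_n$ with $\gamma(\alpha^{-1}(B)) = \beta^{-1}(E)$. By the remark preceding Corollary \ref{jebemt}, namely that Theorem \ref{fpfq} forces every order automorphism of $E_n$ to be continuous, the map $\gamma$ is continuous.

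Finally I would set $\phi := \beta \circ \gamma \circ \alpha^{-1} : [A,C] \to [D,F]$. This is a composition of three bijective order isomorphisms, hence itself a bijective order isomorphism, and it is continuous as a composition of continuous maps (with $\alpha^{-1}$ continuous because $\alpha$ is an affine bijection). By construction $\phi(B) = \beta(\gamma(\alpha^{-1}(B))) = \beta(\beta^{-1}(E)) = E$, which is exactly what is required.

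I do not expect a genuine obstacle here, since all the heavy lifting is done by the quoted results: Lemma \ref{more,more} (with its affine refinement) supplies the continuous identifications with $E_n$, Theorem \ref{staro} supplies the transitivity of the automorphism group on interior points, and Theorem \ref{fpfq} guarantees continuity of the resulting automorphism. The only point demanding a little care is verifying that the strict inequalities $A < B < C$ and $D < E < F$ translate, under the affine maps, into the interior condition $(0,I)$ needed to apply Theorem \ref{staro}; this is a routine check using the explicit form of the affine isomorphism constructed in the proof of Lemma \ref{more,more}.
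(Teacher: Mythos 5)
Your proof is correct and is exactly the argument the paper intends: the paper only sketches this corollary (``Using Lemma \ref{more,more} and the remark that follows that lemma we can easily deduce the next statement''), and your composition $\beta \circ \gamma \circ \alpha^{-1}$ --- affine identifications of $[A,C]$ and $[D,F]$ with $E_n$ via Lemma \ref{more,more}, transitivity on $(0,I)$ via Theorem \ref{staro}, and continuity of the automorphism via Theorem \ref{fpfq} --- fills in that sketch precisely, including the routine check that strict inequalities translate into the interior condition $(0,I)$.
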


Let $A\in S_n$ and assume that $A \ge 0$. Clearly, $A_0$, the restriction of $A$ to the range of $A$, is an invertible operator on the range of $A$. We denote by $A^\dagger$ the Moore-Penrose inverse of $A$, that is, the restriction of $A^\dagger$ to the null space of $A$ is the zero operator, while the restriction of $A^\dagger$ to the range of $A$ is the inverse of $A_0$.

\begin{lemma}\label{naokoli}
Let $A,R \in S_n$ satisfy $A \ge 0$, $R\ge 0$, and ${\rm rank}\, R = 1$. Then $R \le A$ if and only if the range of $R$ is contained in the range of $A$ and 
${\rm tr}\, (A^\dagger R) \le 1$.
\end{lemma}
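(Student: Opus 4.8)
The plan is to reduce the whole statement to a computation with a single vector. Since $R$ is positive and of rank one, I would write $R = xx^\ast$ for some nonzero column vector $x \in \C^n$. Then the range of $R$ equals $\C x$, and by cyclicity of the trace $\tr(A^\dagger R) = \tr(A^\dagger x x^\ast) = x^\ast A^\dagger x = \langle A^\dagger x, x\rangle$. Thus the assertion becomes
\[
xx^\ast \le A \iff x \in \text{range}(A) \text{ and } \langle A^\dagger x, x\rangle \le 1,
\]
and the remaining work is to verify this equivalence.

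First I would show that the range containment is genuinely forced from the left-hand side. Let $P = s(A)$ be the support projection of $A$, so that $\ker A$ is the range of $P^\perp$ and $A$ restricts to a positive invertible operator $A_0$ on the range of $P$. If $xx^\ast \le A$ and $y \in \ker A$, then $|\langle x, y\rangle|^2 = \langle xx^\ast y, y\rangle \le \langle Ay, y\rangle = 0$, whence $x \perp \ker A$; since $A$ is self-adjoint this means $x \in \text{range}(A)$. On the other side of the equivalence the range containment is assumed outright, so under either hypothesis I may proceed with $x \in \text{range}(P)$.

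With $x \in \text{range}(P)$, both $R = xx^\ast$ and the invertible part of $A$ live on $\text{range}(P)$. Writing $A = A_0 \oplus 0$ with respect to $\text{range}(P) \oplus \ker A$, one sees that $A - R \ge 0$ on $\C^n$ if and only if $A_0 - xx^\ast \ge 0$ on $\text{range}(P)$. Conjugating by the invertible $A_0^{-1/2}$, this is in turn equivalent to $I - vv^\ast \ge 0$, where $v := A_0^{-1/2} x$. The only nonzero eigenvalue of the rank-one operator $vv^\ast$ is $\|v\|^2$, so $I - vv^\ast \ge 0$ holds exactly when $\|v\|^2 \le 1$. Finally $\|v\|^2 = \langle A_0^{-1} x, x\rangle = \langle A^\dagger x, x\rangle = \tr(A^\dagger R)$, because $A^\dagger$ agrees with $A_0^{-1}$ on $\text{range}(P)$ and $x \in \text{range}(P)$. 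Chaining these equivalences yields the lemma in both directions simultaneously.

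The argument is essentially routine linear algebra, and I expect no serious obstacle; the only point demanding care is the bookkeeping around the Moore-Penrose inverse. One must establish that $R \le A$ forces $\text{range}(R) \subseteq \text{range}(A)$ \emph{before} the conjugation by $A_0^{-1/2}$ and the identification of $\langle A^\dagger x, x\rangle$ with $\|v\|^2$ become legitimate. This is precisely why the range containment must appear as a separate clause of the statement rather than being folded into the trace inequality.
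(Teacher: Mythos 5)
Your proof is correct and follows essentially the same route as the paper's: both restrict to the range of $A$, conjugate by the inverse square root of the invertible part of $A$ (equivalently, by $(A^\dagger)^{1/2}$), and invoke the fact that a positive rank-one operator is dominated by the identity exactly when its trace is at most $1$ --- in your vectorized form, $vv^\ast \le I \iff \langle A^\dagger x , x \rangle = \| v \|^2 \le 1$. The only difference is that you spell out what the paper leaves implicit: the kernel argument showing that $R \le A$ forces the range containment, and the converse direction, which the paper leaves to the reader.
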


\begin{proof}
Assume first that $R \le A$. Then the range of $R$ is a subspace of the range of $A$ and we have
$$
(A^\dagger)^{1/2} R (A^\dagger)^{1/2} \le (A^\dagger)^{1/2} A (A^\dagger)^{1/2} = P,
$$
where $P$ denotes the projection onto the range of $A$. After replacing $A, A^\dagger , R,P$  with their restrictions to the range of $A$, we can rewrite the above inequality as $A^{-1/2} R A^{-1/2} \le I$ and since $A^{-1/2} R A^{-1/2}$ is a positive rank-one operator this is equivalent to ${\rm tr}\, (A^{-1} R ) =  {\rm tr}\, (A^{-1/2} R A^{-1/2} )  \le 1$. 

The proof of the other direction is now easy and is left to the reader.
\end{proof}

By $P_{n}^k \subset E_n$ we denote the set of all $n \times n$ projections of rank $k$ and by ${\cal S} \subset \mathbb{R}^3$ the euclidean sphere with centre $0$ and radius $1/2$. It is well known that $P_{2}^1$ is affine isometrically isomorphic to ${\cal S}$. An isomorphism (known as
Bloch's representation) is given by ${\cal S} \ni (x,y,z) \mapsto \left[ \begin{matrix}  x + 1/2 & y + iz \cr y - iz & -x + 1/2 \cr \end{matrix} \right]$. In addition to the usual euclidean distance we also consider the geodesic distance $d_g$ on ${\cal S}$ defined for $x,y \in {\cal S}$ as the unique 
real number satisfying $0 \le d_g (x,y) \le \pi /2$ and $| x-y | = \sin d_g (x,y)$. Here, $| \cdot |$ denotes the euclidean norm.

\begin{lemma}\label{coron}
Let $P,Q \in P_{2}^1$ be orthogonal and $0 \le c < d \le 1$ real numbers. Then a projection $E \in P_{2}^1$ satisfies $dE \le P + cQ$ if and only if $\| P - E \| \le \sqrt{ (c-cd) / (d - cd) }$. In particular, we have $(1/2) E \le P + cQ$ if and only if  $\| P - E \| \le \sqrt{ c / (1 - c) }$.
\end{lemma}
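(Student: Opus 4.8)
The plan is to translate the operator inequality $dE \le P + cQ$ into a single scalar inequality by means of the rank-one domination criterion of Lemma~\ref{naokoli}, and then to express everything through the quantity $\tr(PE)$, which I will identify with $1 - \|P-E\|^2$.

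First I would exploit that $P,Q$ are orthogonal rank-one projections on a two-dimensional space, so $Q = I - P$ and hence $A := P + cQ = cI + (1-c)P$. This operator is positive; for $c > 0$ it is invertible with $A^{-1} = P + c^{-1}Q$ (its eigenvalues are $1$ on the range of $P$ and $c$ on the range of $Q$), while for $c = 0$ we simply have $A = P$. Since $0 \le c < d \le 1$ forces $d > 0$, the operator $R := dE$ is positive of rank one, so Lemma~\ref{naokoli} applies and tells us that $dE \le A$ holds if and only if the range of $E$ lies in the range of $A$ and $\tr(A^\dagger\, dE) \le 1$.

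For the principal case $c > 0$, invertibility of $A$ makes the range condition automatic, and the criterion reduces to $d\,\tr\bigl((P + c^{-1}Q)E\bigr) \le 1$. Writing $Q = I - P$ and setting $t := \tr(PE)$, this becomes $d\bigl(t + c^{-1}(1 - t)\bigr) \le 1$. The key step is the identity $\|P-E\|^2 = 1 - t$: since $P - E$ is a traceless Hermitian $2\times 2$ matrix its eigenvalues are $\pm\|P-E\|$, whence $2\|P-E\|^2 = \tr\bigl((P-E)^2\bigr) = 2 - 2\tr(PE)$ (using $P^2 = P$, $E^2 = E$ and $\tr P = \tr E = 1$); this is precisely the content of the Bloch isometry between $P_2^1$ and $\mathcal S$. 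Substituting $t = 1 - \|P-E\|^2$ and solving the resulting inequality for $\|P-E\|^2$ yields $\|P-E\|^2 \le \frac{c(1-d)}{d(1-c)} = \frac{c-cd}{d-cd}$, which is the asserted bound; setting $d = 1/2$ gives the ``in particular'' statement since $\frac{c(1-1/2)}{(1/2)(1-c)} = \frac{c}{1-c}$.

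Finally I would dispatch the degenerate case $c = 0$ separately: here $A = P$ has one-dimensional range, so the range condition of Lemma~\ref{naokoli} forces $E = P$, i.e.\ $\|P-E\| = 0$, which matches the bound since $\sqrt{(c-cd)/(d-cd)} = 0$ when $c = 0$. I do not expect any serious obstacle in this argument; the only points requiring care are the boundary case $c=0$ (governed by the range condition rather than the trace condition) and keeping the direction of the inequality straight while clearing denominators, which is legitimate because both $1-c$ and $d$ are positive.
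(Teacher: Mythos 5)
Your proof is correct and takes essentially the same route as the paper: both arguments reduce the inequality $dE \le P + cQ$ to the trace criterion of Lemma \ref{naokoli} applied with $A = P+cQ$ (whose inverse is $P + c^{-1}Q$ for $c>0$), and both exploit that $P-E$ is a traceless Hermitian $2\times 2$ matrix to relate the resulting scalar inequality to $\| P - E \|$. The only differences are cosmetic: the paper fixes an orthonormal basis and parametrizes $E$ explicitly by a real number $s$, computing $\|P-E\| = \sqrt{-\det(P-E)} = \sqrt{1-s^2}$, whereas you work basis-free through the identity $\|P-E\|^2 = 1 - \tr (PE)$, and you handle the degenerate case $c=0$ via the range condition of Lemma \ref{naokoli} rather than dismissing it as trivial.
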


\begin{proof}
The statement is trivial when $c=0$ or $d=1$. Thus we assume that $0< c < d <1$.
Taking an appropriate orthonormal basis of $\mathbb{C}^2$, we may assume 
$P=  \begin{bmatrix}1 & 0 \\ 0 & 0 \end{bmatrix}$, $Q=  \begin{bmatrix}0 & 0 \\ 0 & 1 \end{bmatrix}$ and $E=  \begin{bmatrix}s^2 & s\sqrt{1-s^2} \\ s\sqrt{1-s^2} & 1-s^2 \end{bmatrix}$ for some real number $0\leq s\leq 1$. Note that $P-E$ is a trace zero $2 \times 2$ hermitian matrix and therefore $\| P-E\| = \sqrt{ - \det (P-E)} = \sqrt{ 1 - s^2}$, and
$$
(P+cQ)^{-1} = \begin{bmatrix}1 & 0 \\ 0 & c^{-1} \end{bmatrix}.
$$
The proof can be completed by a straightforward application of Lemma \ref{naokoli}.
\end{proof}

\begin{lemma}\label{tvvadba}
Let $\psi : {\cal S} \to {\cal S}$ be a homeomorphism. Suppose that for every $x \in {\cal S}$ there exists an increasing bijection $g_x : [0,1] \to [0, 1]$ such that $| \psi (y) - \psi (x) | = g_x (| y-x |)$ for every $y\in {\cal S}$. Then $\psi$ is a surjective isometry on ${\cal S}$ with respect to the euclidean distance.
\end{lemma}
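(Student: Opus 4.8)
The plan is to convert the hypothesis into a statement about inner products and then exploit the fact that any four vectors in $\mathbb{R}^3$ have a singular Gram matrix. First I would reduce to the unit sphere: replacing $\psi$ by the map $u \mapsto 2\psi(u/2)$ turns $\mathcal{S}$ into the unit sphere $S^2 \subset \mathbb{R}^3$ while preserving both the hypothesis and the conclusion, so I may assume $\psi$ acts on $S^2$. For unit vectors one has $|u-v|^2 = 2 - 2\langle u, v\rangle$, so the distance condition becomes $\langle \psi(u), \psi(v)\rangle = G_u(\langle u,v\rangle)$ for some increasing bijection $G_u$ of $[-1,1]$ with $G_u(\pm 1) = \pm 1$. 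The goal is precisely to show $G_u = \mathrm{id}$, since this forces $\psi$ to preserve all inner products and hence to be an isometry.

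The first real step is to remove the dependence on the base point $u$. Because $\langle \psi(u), \psi(v)\rangle$ is symmetric, every pair gives $G_u(\langle u,v\rangle) = G_v(\langle u,v\rangle)$; thus for each fixed $p$ the function $u \mapsto G_u(p)$ takes equal values at the endpoints of every edge of the graph $\Gamma_p$ on $S^2$ in which $u$ and $v$ are joined exactly when $\langle u,v\rangle = p$. For $p \in (-1,1)$ this graph is connected: two spherical circles of angular radius $\theta = \arccos p$ centred at points at angular distance $\alpha$ meet precisely when $\alpha \le 2\min(\theta, \pi - \theta)$, so in two steps one reaches every point within a fixed positive angular radius of the starting point, and iterating reaches all of $S^2$. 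Hence $u \mapsto G_u(p)$ is constant, so $G_u = G$ is a single function independent of $u$. This connectivity claim is the step I expect to require the most care.

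With one increasing bijection $G$ of $[-1,1]$ fixing $\pm 1$, I would record two easy facts. Since $G(-1) = -1$, the map $\psi$ sends antipodes to antipodes, so $\langle \psi(-u), \psi(v)\rangle = -\langle \psi(u), \psi(v)\rangle$ yields $G(-t) = -G(t)$; in particular $G(0) = 0$. Next comes the Gram trick: for the standard basis $e_1, e_2, e_3$ and any unit vector $w = (a,b,c)$, the four points $e_1, e_2, e_3, w$ lie in $\mathbb{R}^3$, so their Gram matrix is singular, and so is the Gram matrix of their four images. Using $G(0) = 0$, this second determinant equals
\[
\det\begin{pmatrix} 1 & 0 & 0 & G(a) \\ 0 & 1 & 0 & G(b) \\ 0 & 0 & 1 & G(c) \\ G(a) & G(b) & G(c) & 1 \end{pmatrix} = 1 - G(a)^2 - G(b)^2 - G(c)^2,
\]
so $G(a)^2 + G(b)^2 + G(c)^2 = 1$ whenever $a^2 + b^2 + c^2 = 1$.

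Finally I would solve this functional equation. Comparing the triples $(a,b,c)$ and $(\sqrt{a^2+b^2},\,0,\,c)$ for $a,b \ge 0$ with $a^2+b^2 \le 1$ and $c = \sqrt{1-a^2-b^2}$ gives $G(a)^2 + G(b)^2 = G(\sqrt{a^2+b^2})^2$. Setting $H(t) = G(\sqrt{t})^2$ turns this into Cauchy's equation $H(s) + H(t) = H(s+t)$ on $[0,1]$; since $G$ is increasing with $G(0)=0$, the function $H$ is monotone, hence $H(t) = t$, so $G(a) = a$ on $[0,1]$ and, by oddness, on all of $[-1,1]$. Thus $\langle \psi(u), \psi(v)\rangle = \langle u, v\rangle$ for all $u,v$, and undoing the rescaling shows $\psi$ preserves the euclidean distance on $\mathcal{S}$; being a homeomorphism, it is a surjective isometry.
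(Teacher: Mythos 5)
Your proof is correct, but it follows a genuinely different route from the paper's. The paper argues metrically: it shows that $\psi$ preserves antipodes, maps circles to circles and great circles to great circles (via intersection arguments), then runs a bisection-and-addition scheme on geodesic distances --- if a geodesic distance $d$ is preserved so is $d/2$, and preserved distances $d_1,d_2$ with $d_1+d_2<\pi/2$ give a preserved distance $d_1+d_2$ --- so that all dyadic distances $k\pi/2^l$ are preserved, and the continuity of $\psi$ (the homeomorphism hypothesis) is then essential to pass from the dense set of dyadic distances to all distances. You instead linearize the problem: passing to inner products, you reduce the base-point dependence of the distortion functions $G_u$ by a connectivity argument on the graph $\Gamma_p$ (your circle-intersection criterion $\alpha\le 2\min(\theta,\pi-\theta)$ is correct, and this step plays a role parallel to the paper's circle-intersection arguments), then exploit the ambient dimension through the singularity of the Gram matrix of four vectors in $\mathbb{R}^3$, which yields the Pythagorean identity $G(a)^2+G(b)^2+G(c)^2=1$ on unit triples, hence Cauchy's equation for $H(t)=G(\sqrt{t})^2$, solved by monotonicity of $g_x$ alone. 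A notable consequence of your route: you never use the continuity (or surjectivity) of $\psi$ --- injectivity is even automatic from $g_x$ being an increasing bijection --- so your argument proves the slightly stronger statement that any map satisfying the distance hypothesis is an isometry, whereas the paper's limiting step genuinely needs the homeomorphism assumption. What the paper's approach buys in exchange is that it is purely metric and does not lean on the ambient Euclidean structure beyond the identification of $\mathcal{S}$ with a round sphere; your Gram-matrix step is exactly where the ambient $\mathbb{R}^3$ enters irreplaceably.
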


\begin{proof}
It is easy to see that $\psi$ preserves antipodes. 
Let $x\in {\cal S}$ and $0<c<1$. 
Then $C_{x, c}:=\{y\in {\cal S}\, : \, \lvert y-x\rvert = c\}$ is mapped onto $C_{\psi(x), g_x(c)}=\{y\in {\cal S}\, : \, \lvert y-\psi(x)\rvert = g_x (c)\}$. 
(These collections are circles in ${\cal S}$.) 
Therefore, for any circle $C$ in ${\cal S}$, the image $\psi(C)$ is a circle in ${\cal S}$. 

Let $x_1, x_2$ be distinct elements in ${\cal S}$. 
Suppose that $x_1$ and $x_2$ are not antipodal in ${\cal S}$. 
Take the great circle $C_0$ in ${\cal S}$ that contains the two points $x_1, x_2$. 
Then the image $\psi(C_0)$ is a circle in ${\cal S}$.
Let $x\in C_0\setminus\{x_1, x_2, -x_1, -x_2\}$. 
An elementary calculation shows that we can take $c_1, c_2\in (0, 1)$ such that $C_{x_1, c_1}\cap C_{x_2, c_2} = \{x\}$. 
It follows that $C_{\psi(x_1), g_{x_1}(c_1)}\cap C_{\psi(x_2), g_{x_2}(c_2)} = \{\psi(x)\}$. 
This equation implies that $\psi(x)$ lies in the great circle that contains the two points $\psi(x_1), \psi(x_2)$. 
Hence the image $\psi(C_0)$ is the great circle that contains the two points $\psi(x_1), \psi(x_2)$.

We consider the geodesic distance $d_g$ for a while.
We already know that $\psi$ preserves the geodesic distance $\pi/2$. 
That is, we have $d_g(\psi(x_1), \psi(x_2))= \pi/2$ if $d_g(x_1, x_2)= \pi/2$.
Suppose that the geodesic distance $d$ with $0<d\leq\pi/2$ is preserved by $\psi$. 
We prove that the geodesic distance $d/2$ is also preserved by $\psi$. 
Let $x_1, x_2\in {\cal S}$ satisfy $d_g(x_1, x_2)=d/2$. 
Take the great circle $C_0$ in ${\cal S}$ that contains the two points $x_1, x_2$. 
Take the unique point $x_0\in C_0\setminus\{x_2\}$ such that $d_g(x_1, x_0)=d/2$. 
Then we have $d_g(x_0, x_2) = d$ and hence $d_g(\psi(x_0), \psi(x_2))=d$. 
It follows that 
\[
\{\psi(x_0), \psi(x_2)\} \subset \psi(C_{x_1, \sin(d/2)}\cap C_0) = \psi(C_{x_1, \sin(d/2)})\cap\psi(C_0). 
\] 
Note that $\psi(C_{x_1, \sin(d/2)})$ is a circle with centre $\psi(x_1)\in \psi(C_0)$. 
Note also that $\psi$ restricted to $C_0$ is a homeomorphism onto $\psi(C_0)$ and preserves antipodes. 
Hence we obtain $d_g(\psi(x_1), \psi(x_2))= d/2$. 

We next prove the following in a similar way: 
If $\psi$ preserves the geodesic distances $d_1, d_2>0$ with $d_1+d_2<\pi/2$, then $\psi$ also preserves the geodesic distance $d_1+d_2$. 
Let $x_1, x_2\in {\cal S}$ satisfy $d_g(x_1, x_2)=d_1+d_2$. 
Take the great circle $C_0$ in ${\cal S}$ that contains the two points $x_1, x_2$. 
Take the unique point $x_0\in C_0$ such that $d_g(x_1, x_0)=d_1$ and $d_g(x_0, x_2)=d_2$. 
Then we have $d_g(\psi(x_1), \psi(x_0))=d_1$ and $d_g(\psi(x_0), \psi(x_2))=d_2$. 
Since $\psi$ restricted to $C_0$ is a homeomorphism onto $\psi(C_0)$ and preserves antipodes, we obtain $d_g(\psi(x_1), \psi(x_2)) = d_1+d_2$.

It follows that $\psi$ preserves the geodesic distance $k\pi/2^l$ whenever $k, l$ are positive integers with $k/2^l\leq 1/2$.
Since $\psi$ is a homeomorphism, we see that $\psi$ is an isometry with respect to the geodesic distance, thus also an isometry with respect to the euclidean distance in $\mathbb{R}^3$. 
\end{proof}

The next result is the crucial step in proving the automatic continuity for order embeddings of matrix domains. From now till the end of this section we will always assume that $n \ge 2$.

\begin{theorem}\label{autcon1}
Let $\phi : E_n \to S_n$ be an order embedding that is continuous at $0$ and $I$. Then $\phi$ is an order isomorphism of $E_n$ onto $[ \phi (0), \phi (I) ]$.
\end{theorem}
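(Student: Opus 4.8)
The plan is to first reduce to a normalized situation, then prove surjectivity, after which continuity and an explicit formula come for free. Since $0<I$ in $E_n$, Corollary~\ref{kadkrene} applied to $\phi$ on $[0,I]$ gives $\phi(0)<\phi(I)$, so $\phi(I)-\phi(0)$ is invertible, i.e.\ $\rank(\phi(I)-\phi(0))=n$. By Lemma~\ref{more,more} there is an affine order isomorphism $\Lambda$ of $E_n$ onto $[\phi(0),\phi(I)]$. Replacing $\phi$ by $\Lambda^{-1}\circ\phi$ (still an order embedding, still continuous at $0$ and $I$, and with image inside $E_n$ because $0\le X\le I$ forces $\phi(0)\le\phi(X)\le\phi(I)$), I reduce to the case $\phi:E_n\to E_n$ with $\phi(0)=0$ and $\phi(I)=I$. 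The goal then becomes: $\phi$ is an order automorphism of $E_n$. Once that is known, Theorem~\ref{fpfq} supplies the explicit (hence continuous) form of $\phi$ automatically, and transporting back through $\Lambda$ yields the statement.

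Next I would record two rigidity consequences of the embedding property. First, strict order is preserved: for $X<Y$ in $E_n$, Corollary~\ref{kadkrene} applied to $\phi|_{[X,Y]}$ gives $\phi(X)<\phi(Y)$; in particular, applying this to the pairs $0<X$ and $X<I$ shows that $\phi$ maps the interior $(0,I)=\{X:0<X<I\}$ \emph{into} itself. Second, combining Lemma~\ref{more,more} with Lemma~\ref{ankar} gives $\rank(\phi(Y)-\phi(X))\ge\rank(Y-X)$ whenever $X\le Y$, since $\phi$ restricts to an order embedding of $[X,Y]\cong E_{\rank(Y-X)}$ into $[\phi(X),\phi(Y)]\cong E_{\rank(\phi(Y)-\phi(X))}$. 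These give the topological skeleton for surjectivity: \emph{if} $\phi$ is continuous on all of $E_n$, then $\phi$ is a continuous injection of the compact ball $E_n$, hence a homeomorphism onto its image, and by invariance of domain $\phi((0,I))$ is open in $S_n$ and contained in $(0,I)$. Thus $\phi((0,I))$ is open and nonempty in the connected set $(0,I)$; if moreover $\phi$ carries $\partial E_n$ into $\partial E_n$, a subsequential-limit argument shows $\phi((0,I))$ is also closed in $(0,I)$, whence $\phi((0,I))=(0,I)$ and, by compactness and density, $\phi(E_n)=E_n$.

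The crux is therefore to establish two things: continuity of $\phi$ everywhere, and the inclusion $\phi(\partial E_n)\subseteq\partial E_n$. In the scalar directions, Lemma~\ref{muci} already gives continuity off a countable subset of the segment $\{cI\}$, and the assumed continuity at $0$ and $I$ together with monotone squeezing is meant to upgrade this; the homogeneity of the interior from Theorem~\ref{staro}, and its interval version Corollary~\ref{jebemt}, let one transport good behaviour from the centre $\tfrac12 I$ to an arbitrary interior point. The genuinely delicate part is the boundary, i.e.\ the non-invertible effects, where strict order degenerates and the rank inequality above is no longer sharp (indeed $\phi$ need not preserve rank-one projections). This is exactly what the Bloch-sphere lemmas are for: by Lemma~\ref{naokoli} and Lemma~\ref{coron}, the order relation $\tfrac12 E\le P+cQ$ among rank-one projections translates into the euclidean distance $\|P-E\|$ on the sphere $\mathcal S$, so that $\phi$ forces a self-map of $\mathcal S$ satisfying the hypotheses of Lemma~\ref{tvvadba}, which then rigidifies it into a surjective isometry. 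That isometric rigidity is what pins down $\phi$ on the extreme boundary, delivering both the missing continuity there and the boundary-into-boundary inclusion.

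With surjectivity in hand, the normalized $\phi$ is an order automorphism of $E_n$; Theorem~\ref{fpfq} then gives it the explicit form $f_q(\,\cdot\,)$, which is continuous, and undoing $\Lambda$ shows that the original $\phi$ is a continuous order isomorphism of $E_n$ onto $[\phi(0),\phi(I)]$. I expect the main obstacle to be the boundary analysis described in the previous paragraph: extracting from a mere order embedding a well-defined map of $\mathcal S$ to which Lemma~\ref{tvvadba} applies, controlling $\phi$ on rank-one projections and other non-invertible effects where invertibility and exact ranks are lost, and leveraging continuity \emph{only} at $0$ and $I$ to propagate the isometric rigidity across all of $E_n$.
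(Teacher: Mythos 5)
Your proposal is an outline rather than a proof: the two steps you yourself identify as the crux --- continuity of the normalized $\phi$ on all of $E_n$, and the inclusion $\phi(\partial E_n)\subseteq\partial E_n$ --- are exactly where the entire difficulty of the theorem lives, and neither is carried out. Moreover, the mechanism you sketch for the first step does not work as stated. Transporting continuity from the centre via Theorem~\ref{staro} and Corollary~\ref{jebemt} fails because Lemma~\ref{muci}, applied to the composed embedding $\phi\circ\psi$, only yields continuity at $cI$ for all $c$ outside a countable exceptional set that \emph{depends on} $\psi$; it never gives continuity at the one prescribed point $\psi^{-1}(X)$ you need. In the paper the logical order is the reverse of yours: continuity of order embeddings at interior points (Theorem~\ref{autcon2}) is deduced \emph{from} Theorem~\ref{autcon1}, so a plan that needs global continuity as an input to surjectivity is circular unless you supply an independent argument, which you do not.

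A second, related gap: your normalization retains only the two hypothesized continuity points $0$ and $I$, and you then flag ``leveraging continuity only at $0$ and $I$'' as the main obstacle. The paper's opening move dissolves precisely this obstacle: by Lemma~\ref{muci} there is an interior continuity point $tI$, and Corollary~\ref{jebemt} lets one renormalize so that $\phi$ maps $E_n$ into itself, fixes the three points $0$, $I/2$, $I$, and is continuous at all three. The third point is what makes the Bloch-sphere machinery usable: it yields $\phi(P/2)=\phi(P)/2$ for projections, without which the function $f_P$ in the relation $\phi(P+cQ)=\phi(P)+f_P(c)Q'$ cannot be controlled and Lemmas~\ref{coron} and~\ref{tvvadba} have nothing to bite on. Note also that the paper does not obtain surjectivity by your invariance-of-domain argument at the level of $E_n$; instead it shows (using continuity at $0$, $I$, $I/2$ together with Lemmas~\ref{more,more} and~\ref{ankar}) that $\phi$ maps projections to projections of the same rank, then that $\phi$ restricts to an isometry of the rank-one projections, then invokes Wigner's theorem to get $\phi(P)=uPu^*$, and finally uses Lemma~\ref{naokoli} to conclude that $u^\ast\phi(\cdot)u$ is the identity on $E_n$ --- surjectivity and continuity drop out of this explicit form at the end, rather than serving as inputs. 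Until you can either prove global continuity and boundary preservation independently, or reproduce the projection-rigidity route, the proposal has a genuine hole at its centre.
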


It is easy to see that the assumption of continuity at the endpoints of the matrix interval is indispensable. For one can take the map   $\phi : E_n \to S_n$ defined by $\phi (X) = X$ whenever $X \not=I$ and $\phi (I) = 2I$ and easily verify that it is an order embedding. 
Nevertheless, we will also give the general form of order embeddings of $E_n$ without assuming continuity in Theorem \ref{pomjan2}.

\begin{proof}
By Lemma \ref{muci}, there exists a real number $0<t<1$ such that $\phi$ is continuous at $tI$. 
By Corollary \ref{kadkrene}, we have $\phi(0)<\phi(tI)<\phi(I)$. 
By Corollary \ref{jebemt}, there exist continuous order isomorphisms $\psi_1: E_n\to E_n$ and $\psi_2: [\phi(0), \phi(I)]\to E_n$ such that $\psi_1(I/2) = tI$ and $\psi_2(\phi(tI)) = I/2$. 
Considering $\psi_2\circ\phi\circ \psi_1: E_n\to E_n$ instead of $\phi$, we may assume that $\phi$ is an order embedding from $E_n$ into itself, and also fixes the three points $0, I/2, I$, and is continuous at these points. 

We next prove that $\phi$ maps every projection of rank $m$ to a projection of rank $m$, $0\leq m\leq n$. 
By Corollary \ref{kadkrene}, we know that $\phi((0, I))\subset (0, I)$. 
If $P$ is a rank-one projection, then $P\nleq X$  holds for any $X\in (0, I)$. 
By the assumption that $\phi$ is continuous at $I$, 
there exists a sequence $(X_k)_{k\ge 1}$ in $\phi((0, I))\,\, (\subset (0, I))$ such that $X_k\to I$.  
It follows that $\phi(P)\nleq X_k$ for any $k\ge 1$,
hence $\phi(P)$ has $1$ as its eigenvalue. 
Since the subset $[P, I]$, which is order isomorphic to $E_{n-1}$, is mapped into $[\phi(P), \phi(I)] = [\phi(P), I]$, by Lemmas \ref{more,more} and \ref{ankar}, the multiplicity of $1$ as an eigenvalue of $\phi(P)$ is one. 
In other words, $[\phi(P), I]$ is order isomorphic to $E_{n-1}$. 
Consider the restriction $\phi|_{[P, I]}: [P, I] \to [\phi(P), I]$. 
Then the same discussion shows the following: 
For any rank-one projection $Q$ with $PQ=0$, $\phi(P+Q)$ has $1$ as an eigenvalue with multiplicity $2$. 
Iterate the same, then we have: 
For any $0\leq m\leq n$ and any rank-$m$ projection $P$, $\phi(P)$ has $1$ as an eigenvalue with multiplicity $m$. 
Let $0\leq m\leq n$. 
Using the continuity of $\phi$ at $0$, a similar argument ensures that if $P$ is a projection with rank $m$, then $\phi(P)$ has $0$ as an eigenvalue with multiplicity $n-m$. 
It follows that  if $P$ is a projection with rank $m$ then so is $\phi(P)$. 
By the assumption that $\phi$ fixes $I/2$ and is continuous at $I/2$, we also see that for any projection $P$, there exists a projection $Q$ such that $\rank P= \rank Q$ and  $\phi(P/2)=Q/2$. 
Since $\phi$ is an order embedding, we obtain $\phi(P/2)=\phi(P)/2$.

Let us show that the restriction $\phi|_{P_{n}^1}$ is an isometry with respect to the metric induced by the operator norm. For any subspace $V\subset \mathbb{C}^n$ we denote by $P^1(V) \subset P_{n}^1$ the subset of all rank-one projections whose ranges are contained in $V$.
It suffices to show that $\phi|_{P^1(V)}$ is an isometry for any two-dimensional subspace $V\subset \mathbb{C}^n$. 
Let $P\in P^1(V)$. 
Take the unique projection $Q\in P^1(V)$ such that $PQ=0$. 
By the preceding paragraph, $\phi(P+Q)$ is a projection of rank two. 
Denote by $W$ the range of $\phi(P+Q)$, and take the unique projection $Q'\in P^1(W)$ such that $\phi(P)Q'=0$. 
For  a real number $0\leq c\leq 1$, we have $P\leq P+cQ\leq P+Q$, and therefore, $\phi(P) \leq \phi(P+cQ)\leq \phi(P)+ Q'$. 
Thus there exists a monotone increasing function $f_P: [0, 1]\to [0, 1]$ such that $\phi(P+cQ) = \phi(P) + f_P(c) Q'$. 
We prove that $f_P(c)\to 0$ as $c\to 0$. 
By the continuity of $\phi$ at $0$, we know that $\phi(2c(P+Q))\to 0$ as $c\to 0$. 
However, by $P+cQ\ngeq 2c(P+Q)$, $c >0$, we obtain $\phi(P)+ f_P(c) Q'=\phi(P+cQ)\ngeq \phi(2c(P+Q))$.
Hence we conclude that $f_P(c)\to 0$ as $c\to 0$.

Since $\phi(E/2)= \phi(E)/2$ for any $E\in P^1(V)$, Lemma \ref{coron} with $d=1/2$ assures the continuity of $\phi|_{P^1(V)}$ at $P$. 
Hence $\phi|_{P^1(V)}$ is an injective continuous map from $P^1(V)$ into $P^1(W)$. 
Since both $P^1(V)$ and $P^1(W)$ can be identified with the 2-dimensional compact connected manifold ${\cal S}$ without boundary, by the invariance of domain theorem, we actually have $\phi(P^1(V)) = P^1(W)$. 
Fix $P\in P^1(V)$, $Q$ as above again. 
It follows that for any $0< c< 1/2$, $\phi$ maps the collection 
$\{E\in P^1(V)\, : \, E/2 \leq P+dQ \,\,\text{ for any }\,\, d>c,\,\, E/2 \nleq P+dQ\,\, \text{ for any }\,\,d<c\}$ into the collection $\{E\in P^1(W)\, : \, E/2\leq \phi(P) + f_P(d)Q'\,\, \text{ for any }\,\,d>c,\,\, E/2\nleq\phi(P) + f_P(d)Q' \,\,\text{ for any }\,\,d<c\}$. 
By Lemma \ref{coron}, the former collection is equal to $\{E\in P^1(V)\, : \, \lVert P-E\rVert = \sqrt{c/(1-c)}\}$ and hence homeomorphic to the circle. 
The latter collection is 
$$
\{E\in P^1(W) : \lVert P-E\rVert \in  [\lim_{d\uparrow c} \sqrt{f_P(d)/(1-f_P(d))}, \lim_{d\downarrow c} \sqrt{f_P(d)/(1-f_P(d))}]\}.
$$
Since $\phi$ restricts to a homeomorphism from $P^1(V)$ onto $P^1(W)$, we see that $f_P$ is a continuous monotone increasing bijection on $[0, 1/2]$. 
Hence we may apply Lemma \ref{tvvadba} for 
$$\psi=\phi|_{P^1(V)}: P^1(V)\to P^1(W),
$$ 
and
$$g_P(\sqrt{c/(1-c)})=\sqrt{f_P(c)/(1-f_P(c))},  \ \ \ 0\leq c\leq 1/2, 
$$
to obtain that $\phi|_{P^1(V)}$ is an isometry.  

Since $P_{n}^1$ is a compact connected manifold without boundary, we see that $\phi$ restricts to a surjective isometry on $P_{n}^1$ by the invariance of domain theorem. 
Applying Wigner's unitary-antiunitary theorem, we see that there exists a unitary or antiunitary $u: \mathbb{C}^n \to \mathbb{C}^n$ such that $\phi(P) = uPu^*$, $P\in P_{n}^1$. 
We will prove that $\phi(A) = uAu^*$ for any $A\in E_n$. 
It suffices to show that the map $\phi_0: E_n\to E_n$ given by $\phi_0(A) := u^*\phi(A) u$ is the identity map on $E_n$. 
It is easy to see that $\phi_0$ fixes every (not necessarily rank-one) projection. 

Take any two-dimensional subspace $V\subset \mathbb{C}^n$ and mutually orthogonal projections $P, Q\in P^1(V)$. 
Fix a real number $0\leq c<1/2$. 
By Lemma \ref{coron} with $d=1/2$, for $E\in P^1(V)$, the condition $\lVert P-E\rVert \leq \sqrt{c/(1-c)}$ is equivalent to $E/2\leq P+cQ$, which is in turn equivalent to $\phi_0(E/2)\leq \phi_0(P+cQ)$. 
We know that $\phi_0(E/2)=E/2$. 
Moreover, since $P=\phi_0(P)\leq \phi_0(P+cQ)\leq\phi_0(P+Q)=P+Q$, there exists a real number $c'\in [0, 1]$ such that $\phi_0(P+cQ)= P+c'Q$. 
Hence, by Lemma \ref{coron} again, the condition $\phi_0(E/2)\leq \phi_0(P+cQ)$ is equivalent to $\lVert P-E\rVert \leq \sqrt{c'/(1-c')}$. 
Therefore, for $E\in P^1(V)$, two conditions $\lVert P-E\rVert \leq \sqrt{c/(1-c)}$ and $\lVert P-E\rVert \leq \sqrt{c'/(1-c')}$ are equivalent. 
We obtain $c=c'$, that is, $\phi_0(P+cQ)=P+cQ$ for any $0\leq c<1/2$. 

We fix a two-dimensional subspace $V\subset \mathbb{C}^n$ again, but fix $E\in P^1(V)$ instead of $P$. 
Let $0<d< 1$ be a real number. 
By Lemma \ref{coron} with $c=d/2$, for $P\in P^1(V)$, the condition $\lVert P-E\rVert \leq \sqrt{(1-d)/(2-d)}$ is equivalent to $dE\leq P+dQ/2$, where $Q$ is the unique projection in $P^1(V)$ with $PQ=0$. 
Then this condition is in turn equivalent to $\phi_0(dE)\leq \phi_0(P+dQ/2)$. 
The preceding paragraph implies $\phi_0(P+dQ/2)=P+dQ/2$. 
Since $0 =\phi_0(0)\leq \phi_0(dE)\leq \phi_0(E)=E$, there exists $d'\in (0, 1)$ such that $\phi_0(dE)=d'E$. 
Hence, by Lemma \ref{coron} again, the condition $\phi_0(dE)\leq \phi_0(P+dQ/2)$ is equivalent to $\lVert P-E\rVert \leq \sqrt{d(1-d')/(d' (2-d))}$. 
It follows that $(1-d)/(2-d) = d(1-d')/(d' (2-d))$, and consequently,
$d=d'$. 

We have shown that $\phi_0(dE)=dE$ for any $E\in P_{n}^1$ and $d\in [0, 1]$. 
Let $A\in E_n$. 
Lemma \ref{naokoli} implies that  the ranges of $A$ and $\phi_0 (A)$ coincide and that ${\rm tr}\, (A^\dagger P) = {\rm tr}\, (\phi_0 (A)^\dagger P)$ for every $P \in P_{n}^1$ whose range is contained in the range of $A$. It follows that $A^\dagger  = \phi_0 (A)^\dagger$, and therefore $A = \phi_0 (A)$.
\end{proof}

\begin{theorem}\label{autcon2}
Let $U \subset S_n$ be a matrix domain and $\phi : U \to S_n$ an order embedding. Then $\phi (U) \subset S_n$ is open and $\phi : U \to \phi (U)$ is a homeomorphism.
\end{theorem}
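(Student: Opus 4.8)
The plan is to localize around an arbitrary point $X_0 \in U$, reduce the behaviour of $\phi$ on a small order interval straddling $X_0$ to an order embedding of $E_n$ into itself, invoke Theorem \ref{autcon1} to force continuity there, and finally globalize by Brouwer's invariance of domain.

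First I would fix $X_0 \in U$ and choose $\rho > 0$ small enough that the order interval $[A', B'] := [X_0 - \rho I, X_0 + \rho I]$ is contained in $U$; note that $B' - A' = 2\rho I$ has full rank $n$. By Lemma \ref{more,more} there is a continuous (affine) order isomorphism $\Lambda : E_n \to [A', B']$ with $\Lambda(0) = A'$ and $\Lambda(I) = B'$. Applying Lemma \ref{muci} to the order embedding $\phi \circ \Lambda : E_n \to S_n$ shows that $\phi$ fails to be continuous at $\Lambda(cI) = (1-c)A' + cB'$ for at most countably many $c \in (0,1)$. Since $\Lambda(\tfrac12 I) = X_0$, I can choose continuity points $c_1 \in (0,\tfrac12)$ and $c_2 \in (\tfrac12, 1)$ and set $A := \Lambda(c_1 I)$, $B := \Lambda(c_2 I)$, so that $A < X_0 < B$, $[A,B] \subset U$, and $\phi$ is continuous at both $A$ and $B$.

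The restriction $\phi|_{[A,B]} : [A,B] \to [\phi(A), \phi(B)]$ is an order embedding, mapping into $[\phi(A), \phi(B)]$ because $\phi$ preserves order. By Corollary \ref{kadkrene} we have $\phi(A) < \phi(B)$, so $\phi(B) - \phi(A)$ has full rank $n$ and, by Lemma \ref{more,more}, $[\phi(A), \phi(B)]$ is continuously (affinely) order isomorphic to $E_n$. Composing $\phi|_{[A,B]}$ with the affine order isomorphisms $E_n \cong [A,B]$ and $[\phi(A), \phi(B)] \cong E_n$, which carry $0$ and $I$ to the respective endpoints, yields an order embedding $\tilde\phi : E_n \to E_n$ that is continuous at $0$ and $I$ and satisfies $\tilde\phi(0) = 0$ and $\tilde\phi(I) = I$. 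Theorem \ref{autcon1} then forces $\tilde\phi$ to be an order isomorphism of $E_n$ onto $[\tilde\phi(0), \tilde\phi(I)] = E_n$, that is, an order automorphism, which is continuous by Theorem \ref{fpfq}. Unwinding the affine identifications, $\phi|_{[A,B]}$ is continuous; in particular $\phi$ is continuous at the interior point $X_0$.

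Since $X_0 \in U$ was arbitrary, $\phi$ is continuous on all of $U$. Finally, $\phi$ is injective (every order embedding is) and $U$ is open in $S_n \cong \mathbb{R}^{n^2}$, so Brouwer's invariance of domain shows that $\phi(U)$ is open in $S_n$ and that $\phi : U \to \phi(U)$ is a homeomorphism. I expect the crux to be the middle step: securing continuity of $\phi$ at two endpoints straddling $X_0$, which is where the monotone-trace argument of Lemma \ref{muci} is indispensable since $\phi$ is a priori continuous nowhere, together with matching the endpoints correctly so that Theorem \ref{autcon1} applies. Once continuity on $U$ is in hand, the invariance-of-domain conclusion is routine.
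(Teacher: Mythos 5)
Your proposal is correct and follows essentially the same route as the paper's proof: localize at a point, use Lemma \ref{muci} (via the affine identification of Lemma \ref{more,more}) to find continuity points straddling it, invoke Corollary \ref{kadkrene} and Theorem \ref{autcon1} to get an order isomorphism of intervals (hence continuous, by the structure theory behind Theorem \ref{fpfq}), and finish with invariance of domain. The only difference is that you spell out the normalizations to $E_n$ explicitly, which the paper leaves implicit.
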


\begin{proof}
By the invariance of domain theorem, it suffices to show continuity. 
Let $A\in U$. 
Lemma \ref{muci} implies that we can take real numbers $c_0, c_1>0$ such that $[A-c_0 I, A+c_1 I]\subset U$ and $\phi$ is continuous at the two points $A-c_0 I, A+c_1 I$. By Corollary \ref{kadkrene}, $\phi(A-c_0 I) < \phi(A+c_1 I)$.
Using the preceding theorem, we see that $\phi$ restricts to an order isomorphism from $[A-c_0 I, A+c_1 I]$ onto $[\phi(A-c_0 I), \phi(A+c_1 I)]$. 
We know that every order isomorphism between two operator intervals is continuous, and hence, $\phi$ is continuous at $A$.
\end{proof}

Theorem \ref{coffee} is an easy consequence of Theorem \ref{autcon2} and results in the preceding sections. Indeed, assume first that a map $\phi : U \to S_n$ has a continuous extension to $U \cup \Pi_n$ that maps $\Pi_n$ biholomorphically onto itself. Then, by Theorem \ref{punodosta},  $\phi : U \to S_n$ is a local order isomorphism. It follows from Proposition \ref{rdrit} and Lemma \ref{finite} that the maximal extension  of $\phi$ is an order isomorphism which trivially yields that $\phi$ is an order embedding. The other direction is even easier. If $\phi$ is an order embedding, then by the previous theorem $\phi : U \to \phi (U)$ is an order isomorphism between two matrix domains. In particular, it is a local order isomorphism, and the desired conclusion follows again from Theorem \ref{punodosta}.\medskip

It is now clear that if we want to have a full understanding of order embeddings of matrix domains we only need to understand the structure of maximal order isomorphisms of matrix domains. On one hand, one can say that this has been already achieved in the previous sections, but on the other hand, one would like to have concrete matrix formulae for such maps. In order to obtain such formulae we need some more notation. If $0 \le p \le n$, then the symbol $S_n(p)$ will stand for the set of all invertible $n \times n$ hermitian matrices with exactly $p$ positive eigenvalues  and $n-p$ negative eigenvalues. Here, each eigenvalue is counted with its multiplicity.
Let $m,p$ be nonnegative integers with $p \le m \le n$. We write each matrix $X \in S_n$ as a block matrix
$$
X = \left[ \begin{matrix}  X_{11} & X_{12} \cr X_{12}^\ast & X_{22} \cr  \end{matrix} \right] ,
$$
where $X_{11} \in S_m$. Put 
$$
U(m,p) = \{ X \in S_n \, : \, X_{11} \in S_m (p) \}.
$$
Clearly, $U(m, p)$ is a matrix domain in $S_n$. Define a map $\phi_{m,p} : U(m,p) \to U(m, m-p)$ by
$$
\phi_{m,p} (X) = \left[ \begin{matrix}  -X_{11}^{-1} &  iX_{11}^{-1} X_{12} \cr -iX_{12}^\ast  X_{11}^{-1}  & X_{22} -  X_{12}^\ast  X_{11}^{-1}X_ {12}\cr  \end{matrix} \right] , \ \ \ 
X = \left[ \begin{matrix}  X_{11} & X_{12} \cr X_{12}^\ast & X_{22} \cr  \end{matrix} \right] \in U(m,p).
$$
We need to explain that in the extreme case when $m=p=0$ we have $X = X_{22}$ and in this case $\phi_{0,0}$ is the identity map defined on the whole $S_n$. 
In the other extreme case when $m=n$ the map $\phi_{n,p}$ is $X \mapsto - X^{-1}$, mapping $S_n (p)$ onto $S_n (n-p)$.

A straightforward calculation shows that 
for any $X = \left[ \begin{matrix}  X_{11} & X_{12} \cr X_{12}^\ast & X_{22} \cr  \end{matrix} \right] \in U(m,p)$,  we have 
\begin{equation}\label{3by3}
-\left[ \begin{matrix} X_{11} & X_{12} & 0 \cr X_{12}^\ast & X_{22} & iI \cr 0 & -iI & 0 \cr \end{matrix} \right]^{-1} =  
 \left[ \begin{matrix}  -X_{11}^{-1} & 0 &  iX_{11}^{-1} X_{12} \cr 0 & 0 & -iI \cr -iX_{12}^\ast  X_{11}^{-1}  & iI & X_{22} -  X_{12}^\ast  X_{11}^{-1}X_ {12}\cr  \end{matrix} \right]
\end{equation}
in $M_{2n-m}$, where $I$ stands for the $(n-m) \times (n-m)$ identity matrix.
Obviously, $t \mapsto \left[ \begin{matrix} X_{11} & tX_{12} & 0 \cr tX_{12}^\ast & tX_{22} & iI \cr 0 & -iI & 0 \cr \end{matrix} \right]$, $t \in [0,1]$, is a continuous path consisting of invertible hermitian matrices connecting 
$\left[ \begin{matrix} X_{11} & 0 & 0 \cr 0 & 0 & iI \cr 0 & -iI & 0 \cr \end{matrix} \right]$ with $\left[ \begin{matrix} X_{11} & X_{12} & 0 \cr X_{12}^\ast & X_{22} & iI \cr 0 & -iI & 0 \cr \end{matrix} \right]$,
and therefore,
$\left[ \begin{matrix} X_{11} & X_{12} & 0 \cr X_{12}^\ast & X_{22} & iI \cr 0 & -iI & 0 \cr \end{matrix} \right]\in S_{2n-m}(n+p-m)$.

\begin{lemma}\label{phimp}
Let $0 \le p \le m \le n$.
The map $\phi_{m, p}$ is a maximal order isomorphism from $U(m, p)$ onto $U(m, m-p)$.
\end{lemma}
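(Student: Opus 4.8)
The plan is to factor $\phi_{m,p}$ through the inversion map $Z\mapsto -Z^{-1}$ on the larger space $S_N$, $N:=2n-m$, which is exactly what the identity \eqref{3by3} is engineered to supply. I would introduce the affine embedding $\iota:S_n\to S_N$,
$$
\iota(X)=\left[\begin{matrix} X_{11} & X_{12} & 0 \cr X_{12}^\ast & X_{22} & iI \cr 0 & -iI & 0 \cr \end{matrix}\right],\qquad X=\left[\begin{matrix} X_{11} & X_{12} \cr X_{12}^\ast & X_{22} \cr \end{matrix}\right]\in S_n,
$$
with $X_{11}\in S_m$ and $I$ the $(n-m)\times(n-m)$ identity, write $\nu(Z)=-Z^{-1}$, and let $\Pi\in M_N$ be the self-adjoint involutive permutation matrix interchanging the last two blocks. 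The right-hand side of \eqref{3by3} is $\nu(\iota(X))$, and reordering its second and third blocks turns it into $\iota(\phi_{m,p}(X))$; thus
$$
\iota(\phi_{m,p}(X))=\Pi\,\nu(\iota(X))\,\Pi,\qquad X\in U(m,p).
$$
Moreover, as established in the paragraph just before the lemma, $\iota(X)\in S_N(n+p-m)$ for every $X\in U(m,p)$, so $\nu$ is only ever applied on the single signature component $q:=n+p-m$.

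From this, three facts give the structural statement. The map $\iota$ is an order embedding, since the additive constant cancels in differences: $\iota(X')-\iota(X)=(X'-X)\oplus 0$, whence $X\le X'$ iff $\iota(X)\le \iota(X')$. Conjugation by the unitary $\Pi$ is an order automorphism of $S_N$. Finally---the \emph{crux}---$\nu$ restricts to an order isomorphism of $S_N(q)$ onto $S_N(N-q)$: the component $S_N(q)$ is order-convex by Lemma \ref{minmax}(2) (if $Z\le Z'$ both lie in it, so does the whole segment), so Theorem \ref{1}, applied to the holomorphic map $\nu$ on the open set of invertible matrices (which carries $\Pi_N$ into $\Pi_N$), yields $\nu(Z)\le \nu(Z')$; and since $\nu$ is an involution interchanging $S_N(q)$ and $S_N(N-q)$, it is an order isomorphism between them. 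Composing the three maps shows $\phi_{m,p}$ both preserves and reflects order. Because $\Pi$-conjugation and $\nu$ are commuting involutions, the displayed identity also gives $\phi_{m,m-p}\circ\phi_{m,p}=\operatorname{id}$ and, symmetrically, $\phi_{m,p}\circ\phi_{m,m-p}=\operatorname{id}$, so $\phi_{m,p}$ is a bijection of $U(m,p)$ onto $U(m,m-p)$.

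For maximality I would argue by divergence at the boundary. Suppose $\phi_{m,p}$ admitted an extension to a local order isomorphism $\widetilde\phi$ on a matrix domain $V\supsetneq U(m,p)$. Connectedness forces $V$ to meet $\partial U(m,p)$, and at such a boundary point $X_0$ the block $(X_0)_{11}$ is singular (the boundary of $U(m,p)$ is the locus $\det X_{11}=0$). By Proposition \ref{pikanogavicka}, $\widetilde\phi$ is continuous on $V$, so $\widetilde\phi(X_0)$ would be the limit of $\phi_{m,p}(X)$ along any sequence in $U(m,p)$ approaching $X_0$. But the $(1,1)$ block of $\phi_{m,p}(X)$ is $-X_{11}^{-1}$, whose norm blows up as $X_{11}\to (X_0)_{11}$, so $\|\phi_{m,p}(X)\|\to\infty$, contradicting the existence of the finite limit $\widetilde\phi(X_0)$. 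Hence no such $V$ exists and $\phi_{m,p}$ is maximal.

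The one genuinely substantial point is the order behaviour of inversion on a mixed-signature component $S_N(q)$ with $0<q<N$, where $\nu$ is neither globally monotone nor globally antitone; it is the order-convexity of $S_N(q)$ from Lemma \ref{minmax}(2) that makes Theorem \ref{1} applicable, and this is where the argument really rests. Everything else reduces to the block identity \eqref{3by3} and routine bookkeeping about signatures and norms.
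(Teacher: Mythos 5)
Your proof is correct, and although it rests on the same structural identity (\ref{3by3}) as the paper's proof, it differs at the two points where the real content lies, in both cases in an instructive way. For the crux --- monotonicity of $\nu(Z)=-Z^{-1}$ between mixed-signature components --- the paper simply invokes Theorem \ref{coffee}: since $S_{2n-m}(n+p-m)$ is a matrix domain and $Y\mapsto -Y^{-1}$ extends to a biholomorphic automorphism of $\Pi_{2n-m}$, it is a maximal order isomorphism onto $S_{2n-m}(n-p)$. You instead prove this step from scratch via Theorem \ref{1} combined with the order-convexity of signature components from Lemma \ref{minmax}(2), which supplies precisely the segment hypothesis that Theorem \ref{1} needs. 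This is more elementary and more self-contained: it bypasses the heavy machinery underlying Theorem \ref{coffee} (the automatic continuity results and Theorem \ref{fpfq}), using only ingredients from Section 2 and the elementary min-max lemma. Second, the paper declares maximality ``clear''; you give an explicit and correct argument: a connected domain strictly containing $U(m,p)$ must meet $\partial U(m,p)$, at any boundary point the block $(X_0)_{11}$ is singular, so $\|\phi_{m,p}(X)\|\ge\|X_{11}^{-1}\|\to\infty$ along sequences in $U(m,p)$ converging to $X_0$, contradicting the continuity that Proposition \ref{pikanogavicka} (or Theorem \ref{autcon2}) forces on any local-order-isomorphism extension. One small inaccuracy, harmless for your argument: $\partial U(m,p)$ is contained in, but not equal to, the locus $\det X_{11}=0$ (a singular $X_{11}$ with more than $p$ positive eigenvalues is not a limit of matrices in $S_m(p)$, so such points are not boundary points); you only use the inclusion, which is the correct direction.
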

\begin{proof}
A direct calculation shows that $\phi_{m, p}$ is a bijection from $U(m, p)$ onto $U(m, m-p)$, and the inverse map is $\phi_{m, m-p}$. 
Since $S_{2n-m}(n+p-m)\subset S_{2n-m}$ is a matrix domain, 
Theorem \ref{coffee} implies that the map $Y\mapsto -Y^{-1}$ is a maximal order isomorphism from $S_{2n-m}(n+p-m)$ onto $S_{2n-m}(n-p)$. 
Hence the equation (\ref{3by3}) shows that $\phi_{m, p}$ is an order preserving map. 
Similarly, $\phi_{m, m-p}$ also preserves order, so $\phi_{m, p}$ is an order isomorphism. 
The maximality of $\phi_{m, p}$ is clear.
\end{proof}

\begin{lemma}\label{upper}
Let $0 \le p \le m \le n$.
Take the maximal integer $k$ such that there exist $X\in U(m, p)$, $Y\in S_n$ with $Y\ge 0$, $\rank Y=k$ and $X+cY\in U(m, p)$ for all real numbers $c\ge 0$. 
Then $k=n+p-m$. 
Similarly, take the maximal integer $l$ such that there exist $X\in U(m, p)$, $Y\in S_n$ with $Y\ge 0$, $\rank Y=l$ and $X-cY\in U(m, p)$ for all $c\ge 0$. 
Then $l=n-p$. 
\end{lemma}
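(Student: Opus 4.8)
The plan is to reduce both assertions to the behaviour of the upper-left $m\times m$ corner. Write $Y=\begin{bmatrix} Y_{11} & Y_{12}\\ Y_{12}^\ast & Y_{22}\end{bmatrix}$ with $Y_{11}\in S_m$ and $Y_{22}\in S_{n-m}$. Since $U(m,p)$ is cut out by a condition on the corner alone, membership $X+cY\in U(m,p)$ is equivalent to $X_{11}+cY_{11}\in S_m(p)$, and $Y\ge 0$ forces $Y_{11}\ge 0$ (a compression of a positive matrix is positive). Thus the hypothesis on $Y$ in the first assertion says precisely that $X_{11}\in S_m(p)$, $Y_{11}\ge 0$, and the inertia of $X_{11}+cY_{11}$ stays equal to $(p,m-p)$ for every $c\ge 0$. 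The problem therefore separates into two independent estimates: an upper bound on $\rank Y_{11}$ coming from this inertia stability, and a bound on $\rank Y$ in terms of $\rank Y_{11}$ coming from the positivity of the block matrix $Y$.

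For the first estimate I would put $r=\rank Y_{11}$ and use Weyl's inequality $\lambda_i(X_{11}+cY_{11})\ge c\,\lambda_i(Y_{11})+\lambda_{\min}(X_{11})$, where $\lambda_1\ge\cdots\ge\lambda_m$ are the eigenvalues in decreasing order; since the $r$ largest eigenvalues of $Y_{11}\ge 0$ are strictly positive, the $r$ largest eigenvalues of $X_{11}+cY_{11}$ tend to $+\infty$ as $c\to\infty$. Hence $X_{11}+cY_{11}$ has at least $r$ positive eigenvalues for large $c$, and as this count is forced to equal $p$ we get $r\le p$. (One may instead invoke Lemma \ref{minmax}(1) to propagate the inertia along the ray.) For the second estimate, positivity of $Y$ yields the range inclusion $\Image Y_{12}\subseteq\Image Y_{11}$ together with the factorization of $Y$ through the generalized Schur complement, giving $\rank Y=\rank Y_{11}+\rank(Y_{22}-Y_{12}^\ast Y_{11}^{\dagger}Y_{12})$; as the Schur complement is $(n-m)\times(n-m)$ its rank is at most $n-m$, so $\rank Y\le \rank Y_{11}+(n-m)\le p+(n-m)=n+p-m$.

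It then remains to realize the bound. I would take $X=\diag(1,\dots,1,-1,\dots,-1,0,\dots,0)$, with $p$ ones followed by $m-p$ minus-ones in the first $m$ slots and $n-m$ zeros afterwards, together with $Y=\diag(1,\dots,1,0,\dots,0,1,\dots,1)\ge 0$ carrying ones in the first $p$ and the last $n-m$ diagonal slots and zeros elsewhere. Then $X_{11}+cY_{11}=\diag(1+c,\dots,1+c,-1,\dots,-1)$ has inertia $(p,m-p)$ for every $c\ge 0$, while $\rank Y=p+(n-m)=n+p-m$. This yields $k=n+p-m$.

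The second assertion is entirely symmetric: subtracting $cY$ drives $\rank Y_{11}$ eigenvalues of $X_{11}-cY_{11}$ to $-\infty$, so the number of negative eigenvalues for large $c$ is at least $\rank Y_{11}$; inertia stability now forces $\rank Y_{11}\le m-p$, and the same block-rank identity gives $\rank Y\le (m-p)+(n-m)=n-p$, realized by supporting $Y_{11}$ on the negative part of $X_{11}$. I expect the only mildly delicate point to be the bookkeeping in the block-rank identity, namely verifying that all the extra rank of $Y$ beyond $\rank Y_{11}$ is absorbed by the $(n-m)\times(n-m)$ block, which is exactly what the Schur complement decomposition quantifies.
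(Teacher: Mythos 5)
Your proposal is correct and follows essentially the same route as the paper's proof: reduce everything to the upper-left corner, bound $\rank Y_{11}$ by $p$ (resp.\ $m-p$) via inertia stability along the ray, bound $\rank Y \le \rank Y_{11} + (n-m)$ using positivity of the block matrix, and realize the extremal rank with an explicit block-diagonal $Y$ (the paper takes $Y = \bigl[ \begin{smallmatrix} (X_{11})_+ & 0 \\ 0 & I \end{smallmatrix} \bigr]$ for an arbitrary $X \in U(m,p)$, while you fix a diagonal $X$; both suffice). Your write-up merely makes explicit the ingredients the paper leaves implicit, namely Weyl's inequality and the Schur-complement rank additivity for positive semidefinite block matrices.
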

\begin{proof}
We prove the first half. 
The second half can be proved in a similar manner.
Fix $X = \left[ \begin{matrix}  X_{11} & X_{12} \cr X_{12}^\ast & X_{22} \cr  \end{matrix} \right] \in U(m,p)$. 
Putting $Y = \left[ \begin{matrix} (X_{11})_+ & 0 \cr 0 & I \cr  \end{matrix} \right] \in S_n$, we obtain $k\ge n+p-m$. 
Let $Y = \left[ \begin{matrix}  Y_{11} & Y_{12} \cr Y_{12}^\ast & Y_{22} \cr  \end{matrix} \right] \in S_n$, where we use the same block decomposition as $X$, satisfy $Y\ge 0$ and $\rank Y\ge n+p-m+1$. 
Then we have $Y_{11}\ge 0$ and $\rank Y_{11}\ge p+1$, which implies $X+cY\notin U(m, p)$ for some $c\ge 0$.
\end{proof}

\begin{theorem}\label{embed}
Two maximal order isomorphisms $\phi_{m,p}$ and $\phi_{m',p'}$ are equivalent if and only if $m= m'$ and $p=p'$. 
If $U \subset S_n$ is a matrix domain and $\phi : U \to S_n$ is an order embedding then there exists a unique pair of nonnegative integers $p,m$ with $p \le m \le n$ such that $\phi$ is equivalent to a restriction of $\phi_{m,p}$.
\end{theorem}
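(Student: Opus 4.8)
The plan is to treat the two assertions of Theorem \ref{embed} separately: first I would classify the model maps $\phi_{m,p}$ up to equivalence, and then I would use a dimension count to show that these maps already exhaust every equivalence class, so that an arbitrary order embedding must factor through one of them.

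For the first assertion, only the direction ``$\phi_{m,p}\sim\phi_{m',p'}\Rightarrow (m,p)=(m',p')$'' requires work. I would attach to the domain $U(m,p)$ the two numerical invariants produced in Lemma \ref{upper}: the integer $k=n+p-m$, which is the largest rank of a positive semidefinite direction along which one may move to $+\infty$ while remaining in the domain, and $l=n-p$, the analogous quantity for the $-\infty$ direction. The crucial point is that $k$ and $l$ are preserved under the equivalence relation. Indeed, if $\phi_{m',p'}=\psi_2\circ\phi_{m,p}\circ\psi_1$ as maps, then comparing domains gives $U(m',p')=\psi_1^{-1}(U(m,p))$, and by Theorem \ref{molnar} the order automorphism $\psi_1$ has the affine form $X\mapsto TXT^\ast+B$ with $T$ a linear or conjugate-linear bijection (possibly composed with $X\mapsto X^t$). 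Such a congruence preserves the order and sends a positive semidefinite operator of rank $r$ to one of the same rank, so the two domains have identical invariants $k$ and $l$. Since Lemma \ref{upper} shows that $(k,l)$ recovers $(m,p)$ through $m=2n-k-l$ and $p=n-l$, I conclude $(m,p)=(m',p')$.

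For the second assertion, I would first reduce to the maximal case: by Theorem \ref{autcon2} the map $\phi:U\to\phi(U)$ is an order isomorphism between matrix domains, hence a local order isomorphism, and by Proposition \ref{rdrit} it has a unique maximal extension $\tilde\phi$. The core of the argument is then a counting match. By Lemma \ref{phimp} each $\phi_{m,p}$ with $0\le p\le m\le n$ is a maximal order isomorphism, and by the first assertion distinct pairs yield inequivalent maps; the number of such pairs is $\sum_{m=0}^{n}(m+1)=(n+1)(n+2)/2$, which is precisely the number of equivalence classes of maximal local order isomorphisms given by Corollary \ref{number}. Hence the family $\{\phi_{m,p}\}$ meets every equivalence class exactly once. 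In particular $\tilde\phi$ is equivalent to a unique $\phi_{m,p}$, say $\tilde\phi=\psi_2\circ\phi_{m,p}\circ\psi_1$, and restricting to $U$ exhibits $\phi=\psi_2\circ(\phi_{m,p}|_{\psi_1(U)})\circ(\psi_1|_U)$ as equivalent to a restriction of $\phi_{m,p}$.

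Uniqueness of $(m,p)$ follows by reversing the passage to maximal extensions: if $\phi$ is equivalent to a restriction of both $\phi_{m,p}$ and $\phi_{m',p'}$, then because maximality is stable under equivalence and each $\phi_{m,p}$ is already maximal, the uniqueness of the maximal extension in Proposition \ref{rdrit} forces $\tilde\phi$ to be equivalent to both, whence $\phi_{m,p}\sim\phi_{m',p'}$ and $(m,p)=(m',p')$ by the first assertion. I expect the main obstacle to be the verification that $k$ and $l$ are genuine invariants of the equivalence relation, which is exactly where one must exploit the explicit affine description of order automorphisms of $S_n$ from Theorem \ref{molnar}; once this is in place, the remainder is the bookkeeping of the counting argument matching pairs $(m,p)$ to the equivalence classes of Corollary \ref{number}.
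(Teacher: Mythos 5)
Your proposal is correct and follows essentially the same route as the paper: the invariants $k=n+p-m$ and $l=n-p$ from Lemma \ref{upper} (preserved by the affine order automorphisms of $S_n$ from Theorem \ref{molnar}) settle the inequivalence of distinct $\phi_{m,p}$, and then the count of $(n+1)(n+2)/2$ pairs matched against the number of equivalence classes of maximal local order isomorphisms in Corollary \ref{number}, together with Theorem \ref{autcon2} and Proposition \ref{rdrit}, yields the second assertion. The only difference is that you spell out details the paper leaves implicit, namely why $k$ and $l$ are equivalence invariants and how the uniqueness of $(m,p)$ follows from uniqueness of the maximal extension.
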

\begin{proof}
Suppose that  $\phi_{m,p}$ and $\phi_{m',p'}$ are equivalent.
Then there exists an order automorphism of $S_n$ that maps $U(m, p)$ onto $U(m', p')$. 
By Lemma \ref{upper}, we have $n+p-m=n+p'-m'$ and $n-p=n-p'$, hence $m=m'$ and $p=p'$. 
Thus we have $(n+2)(n+1)/2$ equivalence classes of maximal order isomorphisms $\{ \phi_{m, p}\}_{0 \le p \le m \le n}$.
By Propositions \ref{rdrit}, \ref{UAUB}, Corollary \ref{number} and Theorem \ref{autcon2}, we conclude that each order embedding of a matrix domain is equivalent to a restriction of $\phi_{m,p}$ for a unique pair of integers $m,p$, $0 \le p \le m \le n$.
\end{proof}

\begin{remark}
In fact, we may prove that $\phi_{m, p}$ is equivalent to $\Phi_A$ for any $A\in S_n$ with $\rank A_+ = p$ and $\rank A_- = m-p$. The proof is left to the reader.
\end{remark}

Theorem  \ref{embed} has an interesting consequence. 
By the equation (\ref{3by3}), every order embedding is in a sense a ``corner'' of the order isomorphism $X \mapsto -X^{-1}$. 
Indeed, let $0 \le p \le m \le n$, and let $\psi, \psi' : S_n \to S_{2n-m}$ be affine order embeddings defined by 
$$
\psi\left(\left[ \begin{matrix}  X_{11} & X_{12} \cr X_{12}^\ast & X_{22} \cr  \end{matrix} \right]\right)= \left[ \begin{matrix} X_{11} & X_{12} & 0 \cr X_{12}^\ast & X_{22} & iI \cr 0 & -iI & 0 \cr \end{matrix} \right],\quad 
\psi'\left(\left[ \begin{matrix}  X_{11} & X_{12} \cr X_{12}^\ast & X_{22} \cr  \end{matrix} \right]\right)= 
\left[ \begin{matrix}  X_{11} & 0 &  X_{12} \cr 0 & 0 & -iI \cr X_{12}^\ast  & iI & X_{22}\cr  \end{matrix} \right]. 
$$
Then the equation (\ref{3by3}) means $\phi_{m, p}(X) = \psi'^{-1}(-\psi(X)^{-1})$, $X\in U(m, p)$. 
Therefore, for any order embedding $\phi: U\to S_n$ of a matrix domain $U\subset S_n$ there exist $0\le m\le n$ and affine order embeddings $\psi_1, \psi_2 : S_n\to S_{2n-m}$ such that $\phi(X)=\psi_2^{-1}(-\psi_1(X)^{-1})$, $X\in U$.

\section{Applications, Examples and Final Remarks}\label{apply}
In the first part of this section we compare our results with the classical Loewner's theorem. 
To begin with, let us give the precise statement of the classical Loewner's theorem. 

\begin{theorem}[Loewner]\label{loewner}
Let $f: (a, b)\to \R$ be a non-constant function on an open interval. The following are equivalent:
\begin{enumerate}
\item $f$ is operator monotone.
\item $f$ has an analytic continuation to the upper half-plane $\Pi$ which maps $\Pi$ into $\Pi$.
\item There exist a finite measure $\mu$ on $\R\setminus (a, b)$ and constants $c\in \R$, $d\ge0$ such that
\begin{equation}\label{pick}
f(x)= c+dx + \int_{\R\setminus (a, b)}\frac{1+xy}{y-x}\, d\mu(y).
\end{equation}
\end{enumerate}
\end{theorem}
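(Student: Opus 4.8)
The plan is to prove the cycle of implications $(1) \Rightarrow (2) \Rightarrow (3) \Rightarrow (1)$, handing the last arrow over to the machinery already developed in this paper and reserving the genuine analytic difficulty for the first arrow. The middle and final arrows are standard and comparatively soft; the first is the deep core of Loewner's theorem.

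For $(3) \Rightarrow (1)$ I would exploit Theorem \ref{1}. Rewriting the integrand as $\frac{1+xy}{y-x} = -y + \frac{1+y^2}{y-x}$, one sees that for each fixed real $y \notin (a,b)$ the scalar function $x \mapsto \frac{1+xy}{y-x}$ is assembled from the affine shift $X \mapsto X - yI$, the inversion $Z \mapsto -Z^{-1}$, a positive dilation, and a real translation. Since $X - yI \in \Pi(H)$ whenever $X \in \Pi(H)$, since $Z \mapsto -Z^{-1}$ is a biholomorphic automorphism of $\Pi(H)$ (see the proof of Lemma \ref{ABCD}), and since $Z \mapsto \alpha Z + \beta I$ with $\alpha > 0$, $\beta \in \R$, preserves $\Pi(H)$, the functional-calculus extension of each integrand maps $\Pi(H)$ holomorphically into $\Pi(H)$. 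Integrating against the finite positive measure $\mu$ and adding $c + dX$ with $d \ge 0$, the map $X \mapsto f(X)$ is holomorphic on a neighbourhood of $\Pi(H)$ (note that formula (\ref{pick}) already exhibits an analytic continuation of $f$ to $(\C \setminus \R) \cup (a,b)$), it carries $\Pi(H)$ into $\Pi(H)$ because $f$ is non-constant, and it sends self-adjoint operators with spectrum in $(a,b)$ to self-adjoint operators. The operator domain $U = \{X \in S(H) : aI < X < bI\}$ is convex, so every segment joining two comparable points stays in $U$; hence Theorem \ref{1} yields $X \le Y \Rightarrow f(X) \le f(Y)$, and since this holds for every Hilbert space $H$, the function $f$ is operator monotone.

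For $(2) \Rightarrow (3)$ I would invoke the Nevanlinna representation of Pick functions: a holomorphic map $f : \Pi \to \overline{\Pi}$ admits a representation $f(z) = c + dz + \int_{\R} \frac{1+yz}{y-z}\, d\mu(y)$ with $d \ge 0$ and $\mu$ a finite positive Borel measure on $\R$. Because $f$ is real-valued on $(a,b)$, the Schwarz reflection principle extends $f$ analytically across $(a,b)$, which forces $\mu$ to assign no mass to $(a,b)$; thus $\mu$ is carried by $\R \setminus (a,b)$, giving exactly (\ref{pick}).

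The main obstacle is $(1) \Rightarrow (2)$. Here I would first use matrix monotonicity of low orders to show that a non-constant operator monotone $f$ is strictly increasing and continuously differentiable on $(a,b)$, and then compute the Gateaux derivative of $X \mapsto f(X)$ through the Daleckii--Krein divided-difference formula. Matrix monotonicity of order $n$ translates, via this formula, into positive semidefiniteness of every Loewner (Pick) matrix $\left[ \frac{f(x_i) - f(x_j)}{x_i - x_j} \right]_{i,j=1}^{n}$ built from points of $(a,b)$, with diagonal entries $f'(x_i)$. The uniform positivity of all such matrices is precisely the Nevanlinna--Pick solvability condition, from which one extracts the analytic continuation of $f$ to $\Pi$ with values in $\overline{\Pi}$, for instance by a normal-families and interpolation argument or by directly synthesising the representing measure. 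I expect the passage from the discrete positivity of the Loewner matrices to the existence of a single global analytic continuation to be the delicate point, exactly as it is in all known proofs of the classical theorem.
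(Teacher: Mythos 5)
Your proposal takes essentially the same approach as the paper: the only implication the paper actually proves is (3)$\Rightarrow$(1), and your argument for it is the paper's argument --- the same splitting of the integrand (your $-y+\frac{1+y^2}{y-x}$ is the paper's $g_X(y)=(y^2+1)(yI-X)^{-1}-yI$), holomorphic functional calculus on $(\C\setminus\R)\cup(a,b)$, non-constancy of $f$ forcing $f(\Pi(H))\subset\Pi(H)$, and convexity of $(aI,bI)$ so that Theorem \ref{1} yields monotonicity for every $H$, hence operator monotonicity. The remaining implications are handled by the paper purely by citation ((2)$\Leftrightarrow$(3) as pre-Loewner classical complex analysis, (1)$\Rightarrow$(2) as the ``hard direction'' referred to Simon's book), and your proposal sits in the same position: your Nevanlinna-representation/Schwarz-reflection argument for (2)$\Rightarrow$(3) is the standard classical one, and your Loewner-matrix/Pick-interpolation outline for (1)$\Rightarrow$(2) correctly names the classical route but, as you concede, does not carry out its delicate core --- which is exactly the status that direction has in the paper itself.
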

We remark that the equivalence (2)$\Leftrightarrow$(3) is a classical fact of complex analysis that was known before Loewner's theorem. 
It is not difficult to show the implication (3)$\Rightarrow$(1) directly, but let us revisit it through the scope of our Theorem \ref{1}. 
\begin{proof}[Proof of (3)$\Rightarrow$(1) of Theorem \ref{loewner} based on Theorem \ref{1}]
Let $f$ be defined by (\ref{pick}). 
We may think of $f$ as a holomorphic function on $(\mathbb{C} \setminus \mathbb{R}) \cup (a,b)$. 
Hence  $f$ determines a map from $\cU:=\{X\in B(H)\, :\, \sigma(X)\subset (\mathbb{C} \setminus \mathbb{R}) \cup (a,b)\}$ into $B(H)$ by holomorphic functional calculus. 
By standard arguments $\cU$ is open and $f$ is a holomorphic map from $\cU$ into $B(H)$.
Note that $\Pi(H)\subset \cU$. 
Let $X\in \Pi(H)$. Then 
$$
f(X)= cI+dX + \int_{\R\setminus (a, b)}g_X(y) \, d\mu(y), 
$$
where $g_X(y) := (y^2+1)(yI-X)^{-1} -yI$, $y\in \R\setminus (a, b)$. 
It is easy to see that $g_X(y)\in \Pi(H)$, $y\in \R\setminus (a, b)$. 
Since $f$ is non-constant, we have either $d>0$ or $\mu(\R\setminus(a, b))>0$, which implies $f(X)\in \Pi(H)$.
By Theorem \ref{1}, $f$ determines an order preserving map on $(aI, bI)$, that is, $f$ is operator monotone.
\end{proof}

Therefore, taking the equivalence (2)$\Leftrightarrow$(3) as granted, we may think of the direction from holomorphic maps to order preserving maps of the classical and our Loewner's theorem in a unified simple manner. 
The essential part of Loewner's achievement is the implication (1)$\Rightarrow$(2), or from order preserving maps to holomorphic maps. 
Simon calls it the ``hard direction'', and gives various proofs in \cite{Sim}. 
In our result the corresponding direction is obtained with a relatively short proof in Section \ref{hard}. 
However, this is not at all easy: we completely rely on the formula (\ref{qaui})  in Theorem \ref{fpfq} by the second author whose proof is quite involved.

In the paper we have treated order embeddings of matrix domains and local order isomorphisms of operator domains, that is, maps that preserve order in both directions and maps that are locally bijective and locally preserve order in both directions, respectively. 
Having the classical Loewner's theorem in mind it is natural to ask why we did not consider maps preserving order in one direction only. The reason is that such maps may have wild behaviour even under additional  assumptions. 
For example, let us recall the fixed-dimensional version of Loewner's theorem: 
\begin{theorem}[Loewner]\label{nloewner}
Let $2\le n<\infty$ be an integer and $f: (a, b)\to \R$ a map on an open interval. The following are equivalent:
\begin{enumerate}
\item $f$ is a matrix monotone function of order $n$, that is, $f$ determines an order preserving map from the matrix interval $(aI, bI)\subset S_n$ into $S_n$.
\item $f$ is continuously differentiable, and for any $n$-tuple $a<\lambda_1<\cdots<\lambda_n<b$, the Loewner matrix $(f^{[1]}(\lambda_i, \lambda_j))_{i,j}\in S_n$ is positive semidefinite, where 
$$
f^{[1]}(\lambda_i, \lambda_j) =  \left\{ \begin{matrix} \frac{f(\lambda_j)-f(\lambda_i)}{\lambda_j-\lambda_i} & {\rm if} & i\neq j \cr 
f'(\lambda_i) & {\rm if} & i=j. \cr \end{matrix} \right.
$$
\end{enumerate}
\end{theorem}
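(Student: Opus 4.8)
The plan is to route both implications through the formula for the Fréchet derivative of the induced matrix function $X \mapsto f(X)$ on the matrix interval $(aI,bI) \subset S_n$, combined with the Schur product theorem; the direction that produces monotonicity can then be handled by Lemma \ref{gateaux}.

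First I would treat $(2) \Rightarrow (1)$, the easier half. Assuming $f \in C^1$ with every Loewner matrix positive semidefinite, the induced matrix function is Fréchet, hence Gateaux, differentiable on $(aI,bI)$, and the Daleckii--Krein formula gives, for $X = U \Lambda U^\ast$ with $\Lambda = \diag(\lambda_1, \ldots, \lambda_n)$,
\[
(Df(X))(H) = U \bigl( L(\Lambda) \circ (U^\ast H U) \bigr) U^\ast,
\]
where $L(\Lambda) = (f^{[1]}(\lambda_i, \lambda_j))_{i,j}$ is the Loewner matrix and $\circ$ denotes the Hadamard product. If $H \ge 0$, then $U^\ast H U \ge 0$, and since $L(\Lambda) \ge 0$ by hypothesis, the Schur product theorem gives $L(\Lambda) \circ (U^\ast H U) \ge 0$, hence $(Df(X))(H) \ge 0$. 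As $(aI,bI)$ is convex, every line segment joining two of its points stays inside it, so Lemma \ref{gateaux} yields $f(X) \le f(Y)$ whenever $X \le Y$; that is, $f$ is matrix monotone of order $n$.

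For $(1) \Rightarrow (2)$ the first and decisive task is to show that matrix monotonicity of order $n \ge 2$ already forces $f \in C^1$. I would extract this from monotonicity of order $2$ alone, analysing $2 \times 2$ matrices $\diag(\lambda,\mu)$ and their positive rank-one perturbations: comparing the difference quotients $f^{[1]}(\lambda,\mu)$ with the one-sided behaviour of $f$ under such perturbations forces $f'$ to exist everywhere and to depend continuously on its argument. Once $f \in C^1$ is in hand, the Daleckii--Krein formula again identifies $Df(\Lambda)$ with Schur multiplication by $L(\Lambda)$. Matrix monotonicity gives $(Df(\Lambda))(H) \ge 0$ for every $H \ge 0$ (use $\Lambda \le \Lambda + tH$ for small $t > 0$, divide by $t$, and let $t \to 0^+$); choosing $H = e e^\ast$, the positive semidefinite all-ones matrix, and using $L(\Lambda) \circ e e^\ast = L(\Lambda)$, we conclude $L(\Lambda) \ge 0$ for every diagonal $\Lambda$ with entries in $(a,b)$, which is exactly condition (2).

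The main obstacle is precisely the $C^1$ regularity in $(1) \Rightarrow (2)$: order-$n$ monotonicity is only a family of operator inequalities and carries no smoothness on its face, so promoting it to continuous differentiability is the technical heart of the argument. By contrast, the derivative formula and the elementary uses of the Schur product theorem, in both its direct form (products of positive semidefinite matrices are positive semidefinite) and its converse form, together with the appeal to Lemma \ref{gateaux}, are routine.
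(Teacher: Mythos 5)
First, a point of reference: the paper does not prove Theorem \ref{nloewner} at all --- it is recalled as a classical fact (with the reader pointed to \cite{Sim}) purely to illustrate that maps preserving order in one direction can be wild. So your proposal cannot be compared against an internal argument and must stand on its own. Your direction (2)$\Rightarrow$(1) does stand: for $f\in C^1$ the Daleckii--Krein formula gives Fr\'echet, hence Gateaux, differentiability of $X\mapsto f(X)$ with derivative equal to Schur multiplication by the Loewner matrix in an eigenbasis; the Schur product theorem gives $(Df(X))(H)\ge 0$ for $H\ge 0$; and since $(aI,bI)$ is convex, Lemma \ref{gateaux} applies. This is exactly the mechanism the paper itself uses (via Theorem \ref{1}) to prove the implication (3)$\Rightarrow$(1) of the classical Theorem \ref{loewner}, so that half is both correct and well aligned with the paper's methods. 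Likewise, in the converse direction your reduction $Df(\Lambda)(ee^\ast)=L(\Lambda)\circ ee^\ast=L(\Lambda)\ge 0$ is correct \emph{once differentiability of $f$ is known}.

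The genuine gap is exactly where you located it, but locating it is not closing it: the claim that matrix monotonicity of order $n\ge 2$ (equivalently, of order $2$) forces $f\in C^1$. Your sketch --- ``comparing the difference quotients $f^{[1]}(\lambda,\mu)$ with the one-sided behaviour of $f$ under such perturbations forces $f'$ to exist everywhere and to depend continuously on its argument'' --- states the desired conclusion rather than an argument for it. What rank-one perturbations of $\diag (\lambda,\mu)$ actually yield, after computing the eigenvalues and eigenvectors of the perturbed matrix, are inequalities between divided differences at \emph{distinct} nodes and Dini derivates; from these one can extract, for instance, local Lipschitz continuity of $f$ and lower bounds of the type ``products of lower derivates dominate $(f^{[1]}(\lambda,\mu))^2$,'' but nothing of this sort produces, by itself, the existence of $f'$ at every point, let alone its continuity. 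That passage is precisely the content of Loewner's regularity theorem (order $2$ implies $C^1$; more generally order $n$ implies $C^{2n-3}$), and its known proofs --- Loewner's original interpolation argument at distinct nodes, the smoothing/mollification approach in \cite{Don}, or the treatments in \cite{Sim} --- all require several pages of careful real analysis, typically establishing in sequence local Lipschitz continuity, existence of one-sided derivatives, their coincidence, and finally continuity of the derivative. Until such an argument is supplied, your proof of (1)$\Rightarrow$(2) has a genuine hole, and everything downstream of it (Daleckii--Krein at a diagonal point, the choice $H=ee^\ast$), while correct, is conditional on it.
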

Even though this theorem gives an explicit and complete characterization of order preserving maps determined by functional calculus, this class of maps is still not well understood. 
See Chapter 14 of \cite{Sim}, in which a succinct description is explained only in the case $n=2$.

For another class of wild examples of maps that preserve order in one direction only, take the operator domain $(0,I) \subset S(H)$. 
Let $f : (0,I] \to (0, 1]\subset \mathbb{R}$ be a continuous function. 
Assume that for every $X,Y \in (0,I)$ we have $X \le Y \Rightarrow f(X) \le f(Y)$.
Suppose further that $f(X)=1$ for every $X \in (0,I]\setminus (0, I)$.
Then the map $\phi : (0,I) \to (0,I)$ given by
$\phi (X) = f(X)X^{1/2}$ is a continuous bijective map preserving order in one direction. Indeed, since the function $g(x) = x^{1/2}$ is operator monotone on $(0,1)$ it is trivial to verify that $\phi$ is continuous and preserves order in one direction. Define a map $\psi : (0,I) \to (0,I)$ by $\psi (X) = (f(X))^2 X = (\phi (X))^2$, $X\in (0,I)$. To show the bijectivity one only needs to see that for every $X \in (0,I)$ the set $ \{ sX \, : s \in \mathbb{R} \} \cap (0,I)$ is mapped by $\psi$ bijectively onto itself. 
One can find a lot of rather ``wild" examples of functions $f : (0,I] \to (0, 1]$ satisfying the above conditions. 
Moreover, we may compose arbitrary order automorphisms of $(0, I)$ from both sides of $\phi$ to obtain a wider class of examples, showing that there is no much hope to get any reasonable structural result for continuous bijections of the operator domain $(0,I)$ onto itself preserving order in one direction only.

Let us give here just one example of a function $f$ with the condition above. 
Let $n$ be a positive integer and $f_1 , \ldots , f_n : (0,1] \to (0,1]$ any surjective increasing functions. For every $x \in H$ of norm one we choose an integer $k$, $1 \le k \le n$, and set $f_x = f_k$. Define $f : (0,I] \to (0, 1]$ by
$$
f(X) = \sup_{x\in H, \| x \| = 1} \{ f_x ( \langle Xx,x \rangle ) \}.
$$
It is easy to see that $f$ satisfies the desired condition.\medskip

In the finite-dimensional case we have a complete understanding of order embeddings $\phi : U \to S_n$ for every matrix domain $U \subset S_n$. We know that there are no order embeddings 
$\phi : U \to S_n$ when $U$ is a matrix domain in $S_m$ with $m > n$. We will next show that there is almost no hope to get any reasonable structural result for order embeddings when $m < n$. 
If a map $\phi : U \to S_n$ is defined by
$$
\phi (X) = \left[ \begin{matrix}  X & 0  \cr 0 & \varphi (X) \cr  \end{matrix} \right]
$$
where $\varphi : U \to S_{n-m}$ is any map preserving order in one direction, then clearly, $\phi$ is an order embedding. 
However, we already know that maps preserving order in one direction may be wild even under additional assumptions, and even under the linearity assumption, see \cite[Section 8]{Sto}. 
Thus, there is no nice description of order embeddings of operator domains in $S_m$ into $S_n$ when $n > m$. 
Let us just add here a remark that $\varphi$ may have the special form $\varphi(X)=\varphi_0(x_{11}) A$, $X = (x_{ij})_{1\le i, j\le m}$, where $\varphi_0 : \mathbb{R} \to \mathbb{R}$ can be any increasing real function, possibly non-continuous, and possibly constant on some intervals, and $A\in S_{n-m}$ can be any positive semi-definite matrix.

Since each infinite-dimensional Hilbert space $H$ can be identified with an orthogonal direct sum of two copies of $H$, the same idea can be used to construct wild examples of order embeddings of $S(H)$ into
$S(H) \cong S(H \oplus H)$. To make matters worse, there are other examples of wild order embeddings of $S(H)$ into $S(H)$. 
Take the case when $H$ is separable and choose an orthonormal basis $\{e_k \, : \, k \ge 1 \}$ 
and a countable dense subset $\{h_k \, : \, k \ge 1 \}$ in the unit ball of $H$. Let further $\varphi_k : \mathbb{R} \to \mathbb{R}$, $k \ge 1$, be any sequence of strictly increasing functions satisfying the condition that for every bounded sequence $(q_k) \subset \mathbb{R}$
the sequence $(\varphi_k (q_k))$ is bounded. 
Then the map $\phi : S(H) \to S(H)$ given by $\phi (X) = D_X$, where $D_X \in S(H)$ is
the ``diagonal operator" defined by
$$
D_X e_k = \varphi_k ( \langle X h_k , h_k \rangle ) e_k, \ \ \ k=1,2,\ldots
$$
is an order embedding. All these examples show that while having a full understanding of order embeddings in the finite-dimensional case, we need to work with local order isomorphisms (the local bijectivity assumption)
when dealing with the infinite-dimensional setting.\medskip

When formulating our results it is essential that we assume that local order isomorphisms are maps defined on operator domains,
that is, open and connected subsets of $S(H)$. 
Indeed, the assumption of openness is indispensable when dealing with differentiability.  
See also Theorem \ref{pomjan2} below, in which a difficulty in dropping the assumption of openness can be observed.
In order to see that the assumption of connectedness is essential we can take
$U= (0,I)$ and $V=  (-I, 0)\cup (0,I)$. If $\psi : (-I,0) \to (-I, 0)$ is any order automorphism, then the map $\phi : V \to V$ defined by $\phi (X) = X$, $ X \in  (0,I)$, and $\phi (X) = \psi (X)$, $X \in (-I,0)$,
is an order automorphism of $V$ extending the identity automorphism of $U$.

At first look it might be surprising that we are dealing with local order isomorphisms rather than with order isomorphisms. 
First of all, the notion of local order isomorphisms appears naturally when studying an analogue of the Loewner's theorem for maps on operator domain. 
Another reason is that for order isomorphisms we do not have the unique extension property. 
Indeed, by Zorn's lemma any order isomorphism between operator domains has an extension to a maximal order isomorphism.
Suppose $A\in S(H)$ and both $A_+$ and $A_-$ are noncompact. 
We show that there exists a domain $U\subset U_A$ such that $\Phi_A$ restricts to an order isomorphism from $U$ onto $\Phi_A(U)$ but this order isomorphism has more than one extension to maximal order isomorphisms. 
We may take a domain $V\subset U_A$ such that $\Phi_A$ restricts to a maximal order isomorphism from $V$ onto $\Phi_A(V)$. 
Note that $V\neq U_A$ by Theorem \ref{compact}.
Take an operator $X\in U_A\cap \partial V \,\,(\neq \emptyset)$. 
We may take a domain $X\in W\subset U_A$ such that $\Phi_A$ restricts to a maximal order isomorphism from $W$ onto $\Phi_A(W)$. 
Hence $\Phi_A$ restricted to $U:=V\cap  W$ has two extensions to maximal order isomorphisms.
See also \cite{AMY}, in which a local condition is essential in giving a variant of Loewner's theorem, too.\medskip

Our results unify and substantially improve all known results on order isomorphisms of operator intervals. 
We set $(-\infty , \infty):=S(H)$.
Take any collection $J$ of one of the forms  
$$
[A,B], \ \,  (A, B), \ \,  [A, B),  \ \,  (A ,B], \ \, (A, \infty), \ \,  [A, \infty),  \ \,  (- \infty , A], \ \, (- \infty , A), \ \, (-\infty, \infty)
$$
for some $A, B\in S(H)$ with $A<B$. 
In \cite{Se6} an explicit construction 
of an order isomorphism from $J$ onto one of the five operator intervals below was constructed:
\begin{equation}\label{mama}
[0,I], \ \,  (0, \infty), \ \,  [0, \infty),  \ \,  (- \infty , 0], \ \,  (-\infty, \infty).
\end{equation}
Moreover, it was shown that any two of (\ref{mama}) are not order isomorphic. 

Thus, the basic theorems on order isomorphisms of operator intervals describe the general forms of order automorphisms of operator intervals (\ref{mama}). 
We already described the case $(-\infty, \infty)$ in Theorem \ref{molnar}. 
It has been known \cite{Mo1, Se6} that for each order automorphism $\phi : J \to J$, where $J$ is any of the intervals 
$$
(0, \infty), \ \,  [0, \infty),  \ \,  (- \infty , 0],  
$$
there exists a bounded bijective linear or conjugate-linear operator $T: H \to H$  such that
\begin{equation}\label{veter}
\phi (X) = TXT^\ast 
\end{equation}
for every $X \in J$. 
Let us give a proof based on our results. 
Suppose that $\phi$ is an order automorphism of $(0, \infty)$. 
Then the map $\psi: (-I, \infty)\to (-\phi(I), \infty)$ defined by $\psi(X)=\phi(X+I)-\phi(I)$ is an order isomorphism with $\psi(0)=0$.
It follows by Theorem \ref{TPhi} that there exist $A\in S(H)$ and an invertible bounded linear or conjugate-linear operator $T: H\to H$ such that  $(-I, \infty)\subset U_A$ and 
$$
\psi(X) = T\Phi_A(X)T^\ast =  T(XA+I)^{-1}XT^\ast,\quad X\in (-I, \infty).
$$
The inclusion $(-I, \infty)\subset U_A$ implies $A\ge 0$. 
Since $T^{-1}Y(T^\ast)^{-1}\in U_{-A}$ for all $Y\in (-\phi(I), \infty)$, we also obtain $A\le 0$, hence $A=0$.
This leads to the formula (\ref{veter}) if $J=(0, \infty)$.
Suppose that $\phi$ is an order automorphism of $[0, \infty)$. 
It is not difficult to see that $\phi$ restricts to an order automorphism of $(0, \infty)$ (imitate the second paragraph of the proof of Lemma \ref{[]}), and from this it is an easy exercise to obtain the desired conclusion when $J=[0, \infty)$.
Since $(-\infty, 0]$ is order anti-isomorphic to $[0, \infty)$ by the map $X\mapsto -X$, we also obtain the general form of order automorphisms of $(-\infty, 0]$. 

Therefore, for four of the intervals (\ref{mama}) the description of the general forms of order automorphisms is easy: all order automorphisms are congruences $X \mapsto TXT^\ast$ or congruences plus translations.  
However, in the case of the interval $[0,I]$ we have quite a complicated formula (\ref{qaui}) in Theorem \ref{fpfq}. 
This description is not really satisfactory. If we denote the map $\phi$ appearing in (\ref{qaui}) by $\phi_{p,q,T}$ then the family of all order automorphisms of the effect algebra $[0,I]$ is parametrized by three parameters $p,q,T$.
In this description there are too many parameters in the sense that we may have $\phi_{p,q,T} = \phi_{p',q',T'}$ when $p\not=p'$, $q\not=q'$, and $T \not= T'$. Using our results we present a better description of order automorphisms
of the effect algebra. 

\begin{theorem}\label{pomjan} 
Assume that $\phi : [0, I] \to [0, I]$ is an order automorphism.
Then there exists a bijective linear or conjugate-linear bounded operator
$T: H\to H$ that is unique up to a multiplication with a complex number of modulus one, such that 
$$
\phi (X) = T\Phi_{T^\ast T -I}(X)T^\ast = T \left( X (T^\ast T -I) +I \right)^{-1} X T^\ast
$$
for every
$X \in [0,I]$.
Conversely, for any bijective linear or conjugate-linear bounded operator $T\in B(H)$, we have $[0, I]\subset U_{T^\ast T -I}$ and $\phi_T: X\mapsto T\Phi_{T^\ast T -I}(X)T^\ast$ is an order automorphism of $[0, I]$. 
\end{theorem}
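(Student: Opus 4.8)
The plan is to read the formula off from the classification of biholomorphic automorphisms of $\Pi(H)$ in Proposition \ref{harris}, and to pin down the constant using the fact that an order automorphism of $[0,I]$ fixes the endpoints. First I would note that $\phi(0)=0$ and $\phi(I)=I$, since $0$ and $I$ are the least and greatest elements of the poset $[0,I]$. Applying Lemma \ref{ABCD} to the order automorphism $\phi$ of $[0,I]=[A,B]=[C,D]$, we get a biholomorphic extension $\phi_{\mathrm{ext}}$ on an open connected $\mathcal U\supset\Pi(H)$ with $\phi_{\mathrm{ext}}(\Pi(H))=\Pi(H)$, agreeing with $\phi$ on $[0,I]\subset\mathcal U$; in particular $\phi_{\mathrm{ext}}(0)=0$. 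Then $\hat\phi:=\phi_{\mathrm{ext}}|_{\Pi(H)}$ is a biholomorphic automorphism of $\Pi(H)$, and taking $X_n=(i/n)I\in\Pi(H)$ we have $X_n\to 0$ and, by continuity of $\phi_{\mathrm{ext}}$ at $0$, $\hat\phi(X_n)\to 0$. Thus the hypotheses of the second half of Proposition \ref{harris} hold, yielding a bounded linear bijection $T$ and $A\in S(H)$ with $\hat\phi(X)=T(X^{-1}+A)^{-1}T^\ast$ for all $X\in\Pi(H)$ (or the transpose version).

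Next I would pass to the boundary. Since $(X^{-1}+A)^{-1}=(XA+I)^{-1}X=\Phi_A(X)$, we have $\phi_{\mathrm{ext}}(X)=T\Phi_A(X)T^\ast$ on $\Pi(H)$. For $X_0\in(0,I)$ I approach $X_0$ through $\Pi(H)$ by $X_0+(is)I$, $s\downarrow 0$. The left-hand sides converge to $\phi(X_0)$, which is bounded, so by Lemma \ref{diverge} the operator $X_0A+I$ cannot be non-invertible; hence $X_0\in W_A$, so $(0,I)\subset W_A$, and letting $s\downarrow 0$ gives $\phi(X_0)=T\Phi_A(X_0)T^\ast$. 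By continuity this persists on all of $[0,I]$. Evaluating at $X=I$ and using $\phi(I)=I$ gives $T(A+I)^{-1}T^\ast=I$, whence $A+I=T^\ast T$, i.e. $A=T^\ast T-I$, the desired formula. In the transpose case $\phi(X)=T\Phi_A(X^t)T^\ast$ I would set $S=TK$ (conjugate-linear, with $S^\ast=KT^\ast$) and use $\Phi_A(X^t)=K\Phi_{KAK}(X)K$ to rewrite $\phi(X)=S\Phi_{KAK}(X)S^\ast$; then $\phi(I)=I$ forces $KAK=S^\ast S-I$, so $\phi=\phi_S$ for a conjugate-linear bijection.

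For uniqueness I would apply Proposition \ref{AA'}: the interval $(0,I)$ is a nonempty open subset of $U_{T^\ast T-I}\cap U_{(T')^\ast T'-I}$, so if two such formulas coincide on $[0,I]$ then part (2) rules out one being of linear and the other of transpose type, while part (1) forces $T^\ast T=(T')^\ast T'$ and $T=\lambda T'$ with $|\lambda|=1$; a unimodular rescaling leaves $\phi_T$ unchanged in both the linear case, where $(\lambda T)^\ast(\lambda T)=T^\ast T$ and $(\lambda T)X(\lambda T)^\ast=TXT^\ast$, and, after the $S=TK$ substitution, in the conjugate-linear case. For the converse, let $T$ be any linear or conjugate-linear bijection and $A=T^\ast T-I\ge -I$. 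Along the segment $tI$, $t\in[0,1]$, one has $tA+I=tT^\ast T+(1-t)I>0$, so this path lies in $\hat U_A$ and $I\in U_A$; since $A\ge -I$, Proposition \ref{interval} with $X=I$ gives $[0,I]\subset U_A$ and shows $\Phi_A$ restricts to an order isomorphism of $[0,I]$ onto $[0,\Phi_A(I)]=[0,(T^\ast T)^{-1}]$. Composing with the order isomorphism $X\mapsto TXT^\ast$, which sends this interval onto $[0,T(T^\ast T)^{-1}T^\ast]=[0,I]$, shows $\phi_T$ is an order automorphism of $[0,I]$.

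I expect the main obstacle to be the passage from the automorphism formula on $\Pi(H)$ down to the self-adjoint interval $[0,I]$: one must guarantee that $(0,I)\subset W_A$ so that the boundary limit of $T\Phi_A(X)T^\ast$ exists and equals $\phi$, and here Lemma \ref{diverge} is the crucial tool, since a blow-up of $\Phi_A$ at a boundary point is incompatible with the boundedness of $\phi$. A secondary technical nuisance is the bookkeeping needed to fold the transpose case into a genuinely conjugate-linear $T$ and to verify that unimodular rescaling is the only remaining freedom.
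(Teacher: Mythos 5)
Your proposal is correct and follows essentially the same route as the paper's proof: Lemma \ref{ABCD} to get the biholomorphic extension, the second half of Proposition \ref{harris} to obtain the form $T(X^{-1}+A)^{-1}T^\ast$ on $\Pi(H)$, Lemma \ref{diverge} plus continuity to pass to $[0,I]$, evaluation at $I$ to force $A=T^\ast T-I$, Proposition \ref{AA'} for uniqueness, and Proposition \ref{interval} for the converse. You merely make explicit some steps the paper leaves implicit (the sequence $(i/n)I$ verifying the hypothesis of Proposition \ref{harris}, and the $S=TK$ conversion of the transpose case into a conjugate-linear congruence).
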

\begin{proof}
It follows from Lemma \ref{ABCD} that $\phi$ extends to a biholomorphic map that maps $\Pi(H)$ onto itself.
By Proposition \ref{harris}, there exist $A\in S(H)$ and a linear bounded bijection $T\in B(H)$ with either
\[
\begin{split}
\phi(X) &= T(X^{-1}+A)^{-1}T^\ast = T(XA+I)^{-1}XT^\ast, \ \ \ X\in \Pi(H), \text{ or}\\
\phi(X) &= T((X^t)^{-1}+A)^{-1}T^\ast = T(X^t A+I)^{-1}X^t T^\ast, \ \ \ X\in \Pi(H).
\end{split}
\] 
By Lemma \ref{diverge}, we have $[0, I]\subset U_A$, and the continuity of $\phi$ implies that either  $\phi(X) = T\Phi_A(X)T^\ast$, $X\in [0, I]$, or  $\phi(X) = T(\Phi_{A^t}(X))^t T^\ast$, $X\in [0, I]$.
Since $\phi(I)=I$, we obtain $I=T(A+I)^{-1}T^\ast$, hence $A=T^\ast T-I$. 
The uniqueness of $T$ up to a multiplication with a complex number of modulus one is a consequence of Proposition \ref{AA'}.

Conversely, let $T : H \to H$ be a bijective linear or conjugate-linear bounded operator. 
Since $(tI) (T^\ast T - I) +I$ is invertible for every $t \in [0,1]$ we have $I \in U_{T^\ast T -I}$. 
Because $T^\ast T-I>-I$, Proposition \ref{interval} implies $[0, I]\subset U_{T^\ast T -I}$, and $\phi_T$ is an order isomorphism from $[0, I]$ onto $[0, \phi_T(I)]=[0, I]$.   
\end{proof}

In the finite-dimensional case we have a stronger result.
\begin{theorem}\label{pomjan2} 
Assume that $\phi : E_n \to S_n$, $n \ge 2$, is an order embedding.
Then there exist an invertible $n \times n$ matrix $T$ and $A, B \in S_n$ with $A > -I$ such that either
$$
\phi (X) =  T \left( X A +I \right)^{-1} X T^\ast +B, \ \ \
X \in E_n \setminus \{ 0,I \},
$$
or
$$
\phi (X) =  T \left( X^t A  +I \right)^{-1} X^t  T^\ast +B, \ \ \
X \in E_n \setminus \{ 0,I \} .
$$
\end{theorem}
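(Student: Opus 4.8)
The plan is to prove the formula first on the interior $(0,I)$ of $E_n$ and then to propagate it to all of $E_n\setminus\{0,I\}$. Since $(0,I)$ is a matrix domain, Theorem \ref{autcon2} shows that $\phi|_{(0,I)}$ is a homeomorphism onto an open set and, in particular, a local order isomorphism. Fixing a small $\varepsilon>0$ and translating by the interior point $\varepsilon I$, I would apply Theorem \ref{TPhi} to the map $Y\mapsto\phi(Y+\varepsilon I)-\phi(\varepsilon I)$ on the domain $(0,I)-\varepsilon I\ni 0$. This produces $A\in S_n$ and an invertible $T$ with $\phi(Z)=T\Phi_A(Z-\varepsilon I)T^\ast+\phi(\varepsilon I)$ for $Z\in(0,I)$, or its transpose analogue. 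Taking $\varepsilon$ so small that $\varepsilon^{-1}\notin\sigma(A)$, hence $-\varepsilon I\in\hat{U}_A$, and applying the translation identity of Proposition \ref{formula} with $X_0=-\varepsilon I$ (so that $B:=\hat{\Phi}_{-\varepsilon I}(A)=A(I-\varepsilon A)^{-1}\in S_n$), the right-hand side rewrites as $T'(ZB+I)^{-1}Z(T')^\ast+B'$ with $T'=T(I-\varepsilon A)^{-1}$ and a constant $B'\in S_n$. Renaming $B$ as $A$, I obtain the asserted formula $\phi(X)=T(XA+I)^{-1}XT^\ast+B$ (or its transpose version) on $(0,I)$.

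Because this expression is finite on $(0,I)$, we have $(0,I)\subset\hat{U}_A$, which already forces $A\ge -I$. To upgrade this to $A>-I$, I would argue by contradiction. If $A$ had an eigenvalue $-1$ with unit eigenvector $v$, let $P_v$ be the corresponding rank-one projection and fix $W=P_v+\tfrac12(I-P_v)\in E_n\setminus\{0,I\}$, so $W\ge P_v$ and $W\neq I$. Choosing interior matrices $X_k\to P_v$ with $X_k\le W$, order preservation gives $\phi(X_k)\le\phi(W)$, so $(\phi(X_k))$ is bounded; but a direct spectral computation shows $\|\Phi_A(X_k)\|\to\infty$, whence $\|\phi(X_k)\|\to\infty$, a contradiction. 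Thus $A>-I$, and then $XA+I$ is invertible for every $X\in[0,I]$. Consequently $g(X):=T(XA+I)^{-1}XT^\ast+B$ is defined and continuous on all of $E_n$, and by Proposition \ref{interval} together with the order automorphism $X\mapsto TXT^\ast+B$ it is an order isomorphism of $E_n$ onto the matrix interval $[g(0),g(I)]$.

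It remains to show $\phi=g$ on the whole lateral boundary $E_n\setminus((0,I)\cup\{0,I\})$, which is the crux. For a boundary point $X_0\neq 0,I$ I would squeeze $\phi(X_0)$ between values of $g$ using order preservation. When $X_0$ has no eigenvalue $1$, the matrices $Z_s=(1-s)X_0+sI\in(0,I)$ satisfy $Z_s\ge X_0$ and $Z_s\to X_0$, giving $\phi(X_0)\le g(X_0)$; dually, when $X_0$ has no eigenvalue $0$ the matrices $(1-s)X_0\in(0,I)$ give $\phi(X_0)\ge g(X_0)$. The hard points are those carrying an extreme eigenvalue on one side only, where the needed one-sided order neighbourhood inside $(0,I)$ is empty. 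Here I would run an induction on $d(X_0)=\dim\ker X_0+\dim\ker(I-X_0)$: for points with both kinds of extreme eigenvalue the comparison matrices $X_0-sP_1$ and $X_0+sP_0$ (with $P_0,P_1$ the eigenprojections for $0$ and $1$) strictly lower $d$, so the inductive hypothesis supplies both bounds and yields equality. For a point with a single kind of extreme eigenvalue I would combine the available one-sided bound with the fact that $\phi$ preserves incomparability: an interior sequence $Z_k\to X_0$ running through matrices incomparable to $X_0$ has $g(Z_k)\to g(X_0)$ with each $g(Z_k)$ incomparable to $\phi(X_0)$, and analysing the admissible tangent directions at $X_0$ forces $\phi(X_0)=g(X_0)$. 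An equivalent, cleaner formulation is to compose with $g^{-1}$ and reduce to the statement that an order embedding of $E_n$ fixing $(0,I)$ pointwise must fix all of $E_n\setminus\{0,I\}$.

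The main obstacle is exactly this last boundary step. Unlike the endpoints $0$ and $I$, where the example $\phi(I)=2I$ shows no such rigidity can hold, the lateral boundary points must be reached with no two-sided order sandwich available inside the interior, so the argument cannot rest on monotone continuity alone and must exploit the full order-embedding structure — preservation of incomparability and the face/interval geometry of $E_n$ provided by Lemma \ref{more,more} and Corollary \ref{kadkrene}. Throughout, the transpose alternative is carried along in parallel, using that $X\mapsto X^t$ is an order automorphism of $S_n$. Once $\phi=g$ on $E_n\setminus\{0,I\}$ is secured, reading off $g$ gives precisely the two stated matrix formulae with $A>-I$.
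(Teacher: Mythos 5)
Your architecture is essentially the paper's: obtain the formula on the interior $(0,I)$ via Theorem \ref{autcon2} and Theorem \ref{TPhi}, show $A>-I$, and then propagate to the lateral boundary by reducing (after composing with the model map) to the rigidity statement that an order embedding fixing the open interval pointwise fixes everything except the endpoints. Your interior step is correct: translating by $\varepsilon I$ and invoking Proposition \ref{formula} with $X_0=-\varepsilon I$ is a legitimate variant of the paper's route (which instead passes to the maximal extension and uses boundedness of $\phi$ on $(0,I)$ plus Lemma \ref{diverge} to get $[0,I]\subset U_A$, then Proposition \ref{interval}); your contradiction argument for excluding the eigenvalue $-1$ also works, provided you note that boundedness of $(\phi(X_k))$ uses the lower bound $\phi(X_k)\ge\phi(0)$ as well as the upper bound $\phi(X_k)\le\phi(W)$. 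Your sandwich $X_0-sP_1\le X_0\le X_0+sP_0$ for points carrying both extreme eigenvalues is exactly the paper's final step. One small correction: the reduction must compose on the domain side, $\psi=\phi\circ g^{-1}$ on $[g(0),g(I)]$, not $g^{-1}\circ\phi$; since $\phi(0)\le g(0)$ and $\phi(I)\ge g(I)$, the image $\phi(E_n)$ need not be contained in $g(E_n)$, so $g^{-1}\circ\phi$ is not globally defined.

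The genuine gap is precisely the case you flag as the crux and then do not prove: boundary points with exactly one extreme eigenvalue, say $0\in\sigma(X_0)$, $1\notin\sigma(X_0)$, where only the one-sided bound $\psi(X_0)\le X_0$ (for the reduced map $\psi$) is available. Your proposed resolution --- ``preservation of incomparability'' along an interior sequence plus ``analysing the admissible tangent directions at $X_0$'' --- is a placeholder, not an argument: incomparability is not preserved under limits, and no construction is given that turns these constraints into the equality $\psi(X_0)=X_0$. What is actually required is this: writing $C=\psi(X_0)\le X_0$ with $C\neq X_0$, one must exhibit an interior matrix $Z\in(0,I)$ with $Z\ge C$ but $Z\not\ge X_0$, contradicting the backward implication of the embedding ($Z\ge C=\psi(X_0)\iff Z\ge X_0$). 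Producing such a $Z$ is nontrivial exactly when $X_0-C$ is supported near $\ker X_0$ (a rank-one perturbation along an eigenvector of $X_0-C$ destroys positivity of $Z$, and an off-diagonal Schur-complement construction is needed). The paper's unnumbered lemma preceding Corollary \ref{patemam} carries out this work in several stages: it first proves $\psi(tP)=tP$ for every rank-one projection $P$ and $t\in(0,1)$, via the claim $C\not<tE_{11}+(I-E_{11})$, an extreme-vector argument pinning down the $(1,1)$-entry, and an explicit separating interior matrix; then $\psi(I-tP)=I-tP$ by symmetry; then it uses these already-determined boundary values as test elements (comparisons $sQ\le X$ versus $sQ\le\psi(X)$) to prove eigenvector preservation and to settle the single-extreme-eigenvalue case for general matrices through a block decomposition. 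This bootstrapping through rank-one multiples is the mathematical heart of the theorem and has no counterpart in your proposal; note that some such argument is unavoidable, since the analogous rigidity fails at the endpoints themselves (the example $\phi(I)=2I$), so the lateral boundary cannot be handled by soft continuity or limit considerations alone.
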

It is clear that $\phi(0)$ is a hermitian matrix satisfying $\phi(0) \le B$ and $\phi (I)$ a matrix satisfying $\phi (I) \ge  T \left(  A +I \right)^{-1}  T^\ast +B$.
Conversely, if $\phi$ is defined on the set $[0,I]\setminus \{ 0,I \}$ by one of the above two formulae and if $\phi(0)$ and $\phi (I)$ satisfy the previous two inequalities, then  $\phi : E_n \to S_n$ is an order embedding.

\begin{lemma}
Assume that $\phi : E_n \to S_n$, $n \ge 2$, is an order embedding. Suppose that $\phi (X) = X$ for every $X \in (0,I)$. Then $\phi (X) = X$ for every $X \in E_n \setminus \{ 0,I \}$.
\end{lemma}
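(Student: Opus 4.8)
The plan is to determine $\phi$ at each boundary point of $E_n$ by combining one-sided monotone limits from the interior with an induction on the rank of the ``corners'' lying below (and above) a given point, and to isolate the genuinely rigid points --- the rank-one matrices and the extreme projections --- where such limits are unavailable. Throughout I write $P_v$ for the rank-one projection onto $\mathbb{C}v$ and I use freely that, since $\phi$ is an order embedding equal to the identity on $(0,I)$, for every $X\in E_n$ and every $Y\in(0,I)$ one has $Y\le X\Leftrightarrow Y\le\phi(X)$ and $X\le Y\Leftrightarrow\phi(X)\le Y$.

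First I would record the elementary squeeze. If $X<I$ then $X=\lim_{\varepsilon\downarrow0}\big((1-\varepsilon)X+\varepsilon I\big)$ with $(1-\varepsilon)X+\varepsilon I\in(0,I)$ and $(1-\varepsilon)X+\varepsilon I\ge X$, so $\phi(X)\le\lim_{\varepsilon\downarrow0}\phi\big((1-\varepsilon)X+\varepsilon I\big)=X$; dually, if $X>0$ then approximating $X$ from below by $(1-\varepsilon)X\in(0,I)$ gives $\phi(X)\ge X$. This settles $X\in(0,I)$ and supplies one inequality at every boundary point that is missing one of the extreme eigenvalues $0,1$.

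Next comes the main engine, a corner induction on $r=\rank X$. For $r\ge2$ and any rank-$(r-1)$ projection $Q$ onto a subspace of the range of $X$, the matrix $Z=X^{1/2}QX^{1/2}$ satisfies $0\le Z\le X$, has rank $r-1$, and by the inductive hypothesis $\phi(Z)=Z$, whence $Z\le\phi(X)$. A short computation in the spirit of Lemma \ref{naokoli} shows that $\sup_Q X^{1/2}QX^{1/2}=X$ as $Q$ runs over all rank-$(r-1)$ subprojections of the range projection of $X$, so $\phi(X)\ge X$. Applying the same argument to $I-X$ yields $\phi(X)\le X$ whenever $\rank(I-X)\ge2$. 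Hence $\phi(X)=X$ as soon as both $\rank X\ge2$ and $\rank(I-X)\ge2$. Moreover, once all rank-one matrices are fixed, a rank-one projection satisfies $\phi(P_v)\ge\phi(\lambda P_v)=\lambda P_v$ for every $\lambda<1$, so $\phi(P_v)\ge P_v$ by letting $\lambda\uparrow1$; together with the corner bound $\phi(P_v)\le P_v$ (valid once $\rank(I-P_v)=n-1\ge2$) this disposes of rank-one projections in dimension $\ge3$, and symmetrically of corank-one projections.

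What remains, and what I expect to be the crux, are the rank-one matrices $\lambda P_v$ ($\lambda\in(0,1)$), which form the base of the induction, and the residual extreme projections (most acutely the rank-one projections when $n=2$). For $\lambda P_v$ I would rewrite the preserved relation $\lambda P_v\le Y\Leftrightarrow\phi(\lambda P_v)\le Y$ for $Y\in(0,I)$, via Lemma \ref{naokoli}, as $\lambda\langle Y^{-1}v,v\rangle\le1\Leftrightarrow\phi(\lambda P_v)\le Y$. Testing $Y=\varepsilon I$ pins the largest eigenvalue of $W:=\phi(\lambda P_v)$ to $\lambda$, and combined with $W\le\lambda P_v$ from the squeeze this forces $Wv=\lambda v$ and $W=\lambda P_v+W'$ with $W'\le0$ on $v^{\perp}$. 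If $W'\neq0$, choosing a negative eigenvector $u$ of $W'$ and an explicit $2\times2$ block for $Y$ on $\operatorname{span}(v,u)$ that is small in the $v$-direction produces an interior $Y\ge W$ with $\lambda\langle Y^{-1}v,v\rangle>1$, i.e.\ $\lambda P_v\not\le Y$, contradicting the preserved relation; hence $W'=0$ and $\phi(\lambda P_v)=\lambda P_v$. The delicate point is that for an honest projection the interior contains \emph{no} element comparable to it, so both preserved relations become vacuous and this scheme collapses; there I would fall back on the rank-one rigidity already developed for $P_n^1\cong{\cal S}$, using the comparisons $dE\le P+cQ$ of Lemma \ref{coron} together with the isometry argument of Lemma \ref{tvvadba} to show that $\phi$ fixes every rank-one projection, thereby closing the leftover $n=2$ case.
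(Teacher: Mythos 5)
Your main engine is genuinely different from the paper's. The paper reduces the whole lemma to the rank-one case $\phi(tP)=tP$ and then recovers a general $X$ by eigenvector pinning (from $tP\le X\le I-(1-t)P$), a block-diagonal argument for matrices with eigenvalue $1$ but not $0$ (and dually), and a final sandwich when both $0$ and $1$ are eigenvalues; you instead propagate the identity upward in rank via the corner matrices $Z=X^{1/2}QX^{1/2}$. Your rank-one crux, on the other hand, is essentially the paper's argument: squeeze to get $W:=\phi(\lambda P_v)\le\lambda P_v$, pin the top eigenvalue of $W$ at $\lambda$ (you do this by testing $Y=\varepsilon I$, the paper by a non-strict-domination argument), conclude $W=\lambda P_v\oplus W'$ with $W'\le 0$, and kill $W'$ by exhibiting an interior $Y\ge W$ with $Y\not\ge\lambda P_v$. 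That part of your plan is correct, and the required $2\times 2$ block does exist. Your supremum claim $\sup_Q X^{1/2}QX^{1/2}=X$ is also correct: for each $u$ one chooses $Q$ whose range contains $X^{1/2}u$.

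The genuine gap is that your induction, as stated, is circular. You take the full equality $\phi(Z)=Z$ at lower rank (resp.\ lower corank) as inductive hypothesis, and you settle the projection base cases by the opposite-sided corner bound: $\phi(P_v)\le P_v$ is justified by the dual corner bound at corank $n-1$, whose hypothesis needs every matrix of corank $n-2$ to be fixed already --- among them the corank-one projections when $n=3$ and the rank-two projections when $n=4$ --- and fixing those by the corner bound at rank $2$ needs every rank-one matrix, including the rank-one projections, to be fixed. Writing pairs $(\mathrm{rank},\mathrm{corank})$, the dependency $(1,n-1)\to(2,n-2)\to(1,n-1)$ is a cycle, so the scheme is not well-founded; this is not just the $n=2$ anomaly you flag, but affects every $n\ge 3$.

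The repair is small but essential, and worth stating: the corner step only ever uses one half of the inductive hypothesis. From $Z\le X$ and $\phi(Z)\ge Z$ alone you get $Z\le\phi(Z)\le\phi(X)$, so the one-sided inequality $\phi(X)\ge X$ propagates upward in rank by itself, with base case the crux for $\lambda<1$ together with $\phi(P_v)\ge\phi(\lambda P_v)=\lambda P_v\to P_v$; dually, $\phi(X)\le X$ propagates upward in corank by itself, with base case the dual crux together with $\phi(I-P_u)\le\phi(I-\lambda P_u)=I-\lambda P_u\to I-P_u$. Running the two one-sided inductions separately and intersecting at the end proves the lemma for all $n\ge 2$, and it makes your fallback on Lemmas \ref{coron} and \ref{tvvadba} unnecessary --- which is just as well, since that fallback is under-justified: in Theorem \ref{autcon1} those lemmas are applied to a map already known to send $P^1(V)$ onto some $P^1(W)$, to fix $0$, $I/2$, $I$ and to be continuous there, none of which is available in the present setting. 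Alternatively, projections can be finished directly in every dimension: for $u\perp v$ we have $P_v\le I-\lambda P_u$, the right-hand side is fixed by the dual crux, so $\phi(P_v)\le I-P_u$ in the limit $\lambda\uparrow 1$; together with $\phi(P_v)\ge P_v$ this gives $\phi(P_v)v=v$, hence $\phi(P_v)$ leaves $v^\perp$ invariant, and the two families of inequalities force $\langle\phi(P_v)u,u\rangle=0$ for all $u\perp v$, i.e.\ $\phi(P_v)=P_v$.
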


\begin{proof}
It is enough to prove that for every projection $P$ of rank one and every real $t \in (0,1)$ we have $\phi(tP)= tP$. Indeed, if we can prove this, then by considering the order embedding $X \mapsto I - \phi (I -X)$ we see that $\phi (I - tP) = I-tP$, $t \in (0,1)$. Assume that we already know this. Take any $X \in E_n$. If $Xx = tx$ for some nonzero $x \in \mathbb{C}^n$ and some real $t$, $t \in (0,1)$, then $tP \le X \le I - (1-t)P$, where $P$ is the projection of rank one whose range is spanned by $x$. Therefore, $tP \le \phi (X) \le I - (1-t)P$, yielding that $\phi (X) x = tx$. If $Xx = x$ for some nonzero vector $x$, then $X \ge tP$, 
where $P$ is the projection of rank one whose range is spanned by $x$ and $t$ is any real number, $0 < t < 1$. Thus, if $X$ has eigenvalue $1$ but $0$ is not its eigenvalue and $X\not=I$, then with respect to a suitable orthonormal basis we have the following block matrix representations
$$
X = \left[ \begin{matrix} I & 0 \cr 0 & X_1 \cr \end{matrix} \right] \ \ \ {\rm and} \ \ \ \phi (X) = \left[ \begin{matrix} Y & 0 \cr 0 & X_1 \cr \end{matrix} \right],
$$
where $Y \ge I$ and $X_1$ has all its eigenvalues in the open interval $(0,1)$. If $Y \not=I$, then it is easy to find a projection $Q$ of rank one and real number $s \in (0,1)$ such that $sQ \not\le X$ but $sQ \le \phi (X)$, a contradiction. In a similar way we see that if $X\not=0$, $0$ is an eigenvalue of $X$ but $1$ is not an eigenvalue of $X$, then $\phi (X) = X$. Finally, if both $1$ and $0$ are eigenvalues of $X$, then
$$
(1 - \varepsilon)X = \phi ((1-\varepsilon)X) \le \phi (X) \le \phi ((1 - \varepsilon)X + \varepsilon I) = (1 - \varepsilon)X + \varepsilon I
$$
for any positive $\varepsilon <1$, and consequently, $\phi (X) = X$.

It remains to verify that $\phi (tP) = tP$ for every projection $P$ of rank one and every real $t \in (0,1)$. 
Assume with no loss of generality that $P = E_{11}$, the matrix whose all entries are zero but the $(1,1)$-entry that equals $1$. From $tE_{11} \le tE_{11} + \varepsilon (I - E_{11}) = \phi ( tE_{11} + \varepsilon (I - E_{11}) )$ we get $ C= \phi (tE_{11}) \le tE_{11}$.
In particular, $c_{11}$, the $(1,1)$-entry of $C$ is no larger than $t$, and
\begin{equation}\label{marij}
\langle Cx,x \rangle \le \langle tE_{11} x, x \rangle
\end{equation}
for every $x \in \mathbb{C}^n$.
It also follows that $C \le tE_{11} + (I-E_{11})$. We claim that $C \not< tE_{11} + (I-E_{11})$. For if $C < tE_{11} + (I-E_{11})$ then there would exist a positive real number $\delta<t$ such that
$$
\phi (tE_{11}) = C \le (t- \delta) E_{11} + (1 - \delta) (I - E_{11}) = \phi ( (t- \delta) E_{11} + (1 - \delta) (I - E_{11}) )
$$
yielding $tE_{11} \le (t- \delta) E_{11} + (1 - \delta) (I - E_{11})$, a contradiction. Thus, there exists a unit vector $y$ such that
$$
\langle tE_{11} y , y \rangle + \langle (I-E_{11}) y , y \rangle = \langle Cy,y \rangle.
$$
Since $I- E_{11}$ is positive we get from (\ref{marij}) that $ \langle (I-E_{11}) y , y \rangle =0$ implying that $y = be_{1}$ for some complex number $b$ of modulus one. Here, $e_1$ is the first standard basis vector. Hence $c_{11} = t$ which together with
$ C \le tE_{11}$ yields that
$$
C = \left[ \begin{matrix} t & 0 \cr 0 & C_1 \end{matrix} \right]
$$
with $C_1 \le 0$. We need to show that $C_1 = 0$. If not, then we can easily find $X \in (0,I)$ such that $X \ge C$ but $X \not\ge tE_{11}$, a contradiction.
\end{proof}

\begin{corollary}\label{patemam}
Let $A,B \in S_n$ satisfy $A < B$.
Assume that $\phi : [A,B] \to S_n$, $n \ge 2$, is an order embedding. Suppose that $\phi (X) = X$ for every $X \in (A,B)$. Then $\phi (X) = X$ for every $X \in [A,B] \setminus \{ A,B \}$.
\end{corollary}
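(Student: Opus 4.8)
The plan is to reduce the general interval $[A,B]$ to the unit interval $E_n$ and then invoke the preceding lemma verbatim. Since $A < B$, the difference $B-A$ is invertible, so the affine map $\psi : S_n \to S_n$ given by $\psi(X) = (B-A)^{1/2} X (B-A)^{1/2} + A$ is a congruence followed by a translation. In particular $\psi$ is an order automorphism of $S_n$, and so is its inverse $\psi^{-1}(Y) = (B-A)^{-1/2}(Y-A)(B-A)^{-1/2}$. By Lemma \ref{more,more} and the affineness noted immediately after it, $\psi$ restricts to an order isomorphism of $E_n$ onto $[A,B]$; and because $\psi$ is affine with $B-A$ invertible, it carries the open interval $(0,I)$ onto $(A,B)$.

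Next I would form the conjugated map $\tilde\phi := \psi^{-1} \circ \phi \circ \psi : E_n \to S_n$. This is well defined because $\psi^{-1}$ is globally defined on $S_n$, and it is an order embedding, being the composition of the order embedding $\phi$ with two order automorphisms of $S_n$. For $X \in (0,I)$ we have $\psi(X) \in (A,B)$, whence $\phi(\psi(X)) = \psi(X)$ by hypothesis, and therefore $\tilde\phi(X) = \psi^{-1}(\psi(X)) = X$. Thus $\tilde\phi$ is an order embedding of $E_n$ into $S_n$ that is the identity on the interior $(0,I)$.

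Now the preceding lemma applies directly to $\tilde\phi$ and yields $\tilde\phi(X) = X$ for every $X \in E_n \setminus \{0,I\}$. Unwinding the conjugation, for any $Y \in [A,B] \setminus \{A,B\}$ I set $X = \psi^{-1}(Y) \in E_n \setminus \{0,I\}$ and compute $\phi(Y) = \phi(\psi(X)) = \psi(\tilde\phi(X)) = \psi(X) = Y$, which is the desired conclusion.

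There is no serious obstacle here: the argument is a routine transport of structure along the affine order isomorphism $\psi$. The only two points that deserve a moment's care are that $\psi^{-1}$ must be taken as the affine order automorphism of all of $S_n$ (so that the conjugation makes sense even though $\phi$ need not map into $[A,B]$), and that $\psi$ carries the order-theoretic interior $(0,I)$ exactly onto $(A,B)$, which is immediate from the invertibility of $B-A$.
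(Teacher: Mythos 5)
Your proof is correct and is precisely the reduction the paper has in mind: the paper dismisses this corollary as ``Trivial,'' and the intended triviality is exactly your conjugation by the affine order isomorphism $\psi(X)=(B-A)^{1/2}X(B-A)^{1/2}+A$, which carries $E_n$ onto $[A,B]$ and $(0,I)$ onto $(A,B)$, followed by an application of the preceding lemma. Nothing further is needed.
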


\begin{proof} Trivial. \end{proof}

\begin{proof}[Proof of Theorem \ref{pomjan2}.]
Theorem \ref{autcon2} yields that the restriction $\phi : (0,I) \to S_n$ is a local order isomorphism. Thus, there exists a unique extension of $\phi|_{(0, I)}$ to a maximal (local) order isomorphism $\Phi: U\to S_n$. Since $ \phi (0) \le \phi (X) \le \phi (I)$ for every $X \in (0, I)$ we have $\| \phi (X) \| \le M$, $X \in (0, I)$, for some positive real number $M$. Thus Lemma \ref{diverge} implies $[0,I] \subset U$. 
By Theorem \ref{TPhi}, we conclude that there exist a bijective linear or conjugate-linear bounded operator
$T : H \to H$ and $A,B \in S(H)$ such that $[0, I]\subset U_A$ and $\Phi (X) = T \left( X A +I \right)^{-1} X T^\ast +B$ for every $X \in U$.
By Proposition \ref{interval}, we obtain $A>-I$.
We know that $\Phi ([0,I]) = [\Phi (0) , \Phi (I)]$ and that $\Phi ((0,I)) = (\Phi(0), \Phi(I))$. 

Clearly, $\psi = \phi \circ \Phi^{-1} : [\Phi (0) , \Phi (I) ] \to S_n$ is an order embedding. Moreover, $\psi (X) = X$ for every $X \in (\Phi (0) , \Phi (I) )$. It follows from Corollary \ref{patemam} that $\phi ( \Phi^{-1} (X) ) = X$ for every $X \in [\Phi (0) , \Phi (I) ] \setminus \{ \Phi (0) , \Phi (I) \}$, or equivalently, $\phi (X) = \Phi (X)$ for every $X \in [0, I] \setminus \{ 0, I \}$.
\end{proof}
\medskip\medskip

{\bf Acknowledgement.} 
The authors are grateful to \'Eric Ricard for making them aware of the reference \cite{Har}.

\end{document}